\documentclass[11pt]{article}

\usepackage[a4paper,margin=1in]{geometry}
\usepackage{amsmath,amssymb,amsthm,mathtools}
\usepackage{booktabs}
\usepackage{array}
\usepackage{enumitem}
\usepackage{microtype}
\usepackage{graphicx}
\usepackage[colorlinks=true,linkcolor=blue,citecolor=blue,urlcolor=blue]{hyperref}

\numberwithin{equation}{section}

\newtheorem{theorem}{Theorem}[section]
\newtheorem{proposition}[theorem]{Proposition}
\newtheorem{lemma}[theorem]{Lemma}
\newtheorem{corollary}[theorem]{Corollary}

\theoremstyle{definition}
\newtheorem{definition}[theorem]{Definition}
\newtheorem{remark}[theorem]{Remark}
\newtheorem{example}[theorem]{Example}

\newtheorem{problem}[theorem]{Problem}

\newcommand{\PP}{\mathbb P}
\newcommand{\CC}{\mathbb C}
\newcommand{\QQ}{\mathbb Q}
\newcommand{\ZZ}{\mathbb Z}
\newcommand{\FF}{\mathbb F}
\newcommand{\OO}{\mathcal O}
\newcommand{\Wr}{\operatorname{Wr}}
\newcommand{\Aut}{\operatorname{Aut}}
\newcommand{\Hess}{\operatorname{Hess}}
\newcommand{\Span}{\operatorname{Span}}
\newcommand{\ord}{\operatorname{ord}}
\newcommand{\divisor}{\operatorname{div}}
\newcommand{\V}{\operatorname{V}}
\newcommand{\PSL}{\operatorname{PSL}}
\newcommand{\SL}{\operatorname{SL}}
\newcommand{\GL}{\operatorname{GL}}

\newcommand{\Klein}{\mathcal K}
\newcommand{\Fermat}{\mathcal F}
\newcommand{\KK}{\mathcal{Q}}
\newcommand{\Pic}{\mathcal{P}}
\newcommand{\W}{\mathcal W}
\newcommand{\sing}[1]{\texttt{#1}}

\title{Third-Order Weierstrass Points on Plane Quartics}
\author{Ewelina Nawara}
\date{\today}

\begin{document}
\maketitle

\begin{abstract}
For a smooth plane quartic $C$, the $3$-Weierstrass points are the $300$ points (counted with weights) at which some plane cubic has contact order at least ten with $C$. Revisiting the work of Kamel and Elshareef, we write the third Veronese Wronskian
as an $\SL_3$-covariant form $\Omega_3$ of degree $75$, given by a determinant of
iterated Hamiltonian derivatives, and explain the special $3$-Weierstrass points through contact cubics and homologies preserving $C$. We determine $\Omega_3$ for the Fermat, Komiya--Kuribayashi, Klein and Picard quartics; for the Klein quartic
$\Omega_3\equiv\Phi_6^2\,\Phi_{21}\,(3\Phi_{14}^3+21962\,\Phi_6^7)$ modulo the curve, up to a non-zero scalar.
Finally, the dectactic points of the Fermat quartic are not $4$-Weierstrass points, which disproves a conjecture of Kamel and Elshareef. All computations are supported by \texttt{SINGULAR} scripts.
\end{abstract}
 
\section{Introduction}\label{sec:intro}

Let $C\subset\PP^2$ be a smooth plane curve over $\CC$. Its \emph{flexes} are the points where the tangent line has contact order at least $3$, and these points are cut out by the Hessian. Its \emph{sextactic points} are the non-flexes where the osculating conic has contact order at least $6$. Cayley \cite{Cayley} gave a covariant of degree $12d-27$, the $2$-Hessian, cutting them out, and Maugesten and Moe \cite{MaugestenMoe} corrected Cayley's formula and showed, for rational curves, that sextactic points are detected by a Wronskian of the second Veronese embedding $\nu_2:\PP^2\to\PP^5$.

The next step is to look for points at which a plane cubic has contact order at least $10=\dim H^0(\PP^2,\OO(3))$ with $C$. The third Veronese map
$\nu_3:\PP^2\to\PP^9$ turns plane cubics into hyperplanes, so these are exactly the hyperosculation points of $\nu_3(C)\subset\PP^9$, and they are the zeros of a Wronskian, see Section~\ref{sec:wronskian}. For a smooth plane quartic the picture is especially clean: $C$ is canonically embedded, $\OO_C(3)\simeq 3K_C$, and these points are the classical Weierstrass points of order three, i.e.\ the Weierstrass points of the complete linear system $|3K_C|$, see \cite{FarkasKra,Laksov}. Their weighted number is
$300$, see Proposition~\ref{prop:degrees}.

\paragraph{The work of Kamel and Elshareef.} Third-order Weierstrass points of smooth quartics were studied by Kamel and Elshareef \cite{KamelElshareef}. Their approach is local and computational. First, they classify the $3$-gap sequences that can occur at flexes, at sextactic points and at the \emph{dectactic} points (the remaining $3$-Weierstrass points), and they show by explicit equations that every sequence in their list occurs \cite[Tables~1--3]{KamelElshareef}. Secondly, they propose a technique for computing the $3$-Weierstrass divisor: compute the Wronskian $W_3(x,y)$ of the affine monomials $1,x,y,\dots,y^3$ with respect to $x$ and then assemble
$\divisor(\Omega_3)=30\divisor(Z)+\divisor(W_3)+45\divisor(dx)$ from the divisors of $x$, $y$, $dx$ and of the line at infinity. They apply it to the Fermat and the Picard quartics, and they conjecture that $\W_q(\Fermat)\subseteq\W_{q+1}(\Fermat)$ for the Fermat quartic $\Fermat$ and all $q\ge1$
\cite[Section~5]{KamelElshareef}.

\paragraph{What is different here.} The present paper complements \cite{KamelElshareef} in three ways.\begin{enumerate}[label=(\arabic*)]
\item \emph{A global, covariant Wronskian.} We work with a single form $\Omega_n\in\CC[X,Y,Z]_{d_n}$, unique modulo the equation $F$ of $C$, whose restriction to $C$ has divisor $\W_n(C)$. It is given without denominators by $\Omega_n(x,y,1)\equiv\det\bigl(\delta^ib_j\bigr)$ with the Hamiltonian derivation $\delta=f_y\partial_x-f_x\partial_y$, and projectively by $\det(J_L^iM_j)\equiv (-1)^\frac{r(r+1)}{2}L^\frac{r(r+1)/2}{2}\,\Omega_n$ (the sign is $-1$ for $n\le4$), where $J_L(G)=\det(\nabla L,\nabla F,\nabla G)$, see Propositions~\ref{prop:hamilton} and~\ref{prop:projective}. Hence $\Omega_n$ is $\SL_3$-covariant by Corollary~\ref{cor:covariance}, a factorisation of $\Omega_3$ into forms is the same thing as a decomposition of $\W_3(C)$, and no bookkeeping at infinity or at the ramification points of a projection is needed.
\item \emph{Geometry instead of case analysis.} We give a uniform geometric proof of the exclusions in the classification of \cite{KamelElshareef} via linear equivalence on a contact cubic, Proposition~\ref{prop:no11}, characterise the ordinary flexes of $3$-weight four as those $P$ for which $F=\lambda t_P^4+\ell\cdot e$ with $T_P=\V(t_P)$ and a cubic $E=\V(e)$ inflectional at $P$, see Proposition~\ref{prop:totalcontact}, and we show how homologies preserving $C$ force weights in Theorem~\ref{thm:homologies}: axis points of involutions are sextactic and $3$-Weierstrass (or hyperflexes), axis points of homologies of order three are ordinary flexes of $3$-weight four, axis
points of homologies of order four are hyperflexes, and all of them lie in $\W_q(C)$ for every $q\ge2$, see Theorem~\ref{thm:allq}. \item \emph{More examples, and order four.} We treat the Fermat, Komiya--Kuribayashi and Klein quartics and a general quartic, and we re-derive the computation of \cite{KamelElshareef} for the Picard quartic, where now every flex and every sextactic point in $\W_3$ is explained by a homology. On the Fermat quartic we compute $\Omega_4$ and find that none of the $192$ dectactic points is a $4$-Weierstrass point, see Theorem~\ref{thm:fermat4}, so the conjecture of \cite{KamelElshareef} fails for $q=3$, although $\W_1(\Fermat)\cup\W_2(\Fermat)\subseteq\W_q(\Fermat)$ for all $q\ge2$.
\end{enumerate}
The results on the $3$-Weierstrass divisors are summarised in Table~\ref{tab:summary} below.

\begin{table}[ht]
\centering
\small
\renewcommand{\arraystretch}{1.25}
\begin{tabular}{@{}>{\raggedright}p{2.6cm}>{\raggedright}p{3.6cm}>{\raggedright}p{2.9cm}>{\raggedright}p{2.2cm}c@{}}
\toprule
curve & flexes in $\W_3$ & sextactic points in $\W_3$ & dectactic points & distinct\tabularnewline
\midrule
Fermat $\Fermat$ & $12$ hyperflexes, weight $5$ & all $48$, weight $1$ & $192$, weight $1$ & $252$\tabularnewline Komiya--Kuribayashi $\KK$ & $12$ hyperflexes, weight $5$ & $24$ of the $48$, weight $1$ & $216$, weight $1$ & $252$\tabularnewline
Klein $\Klein$ & $24$ ordinary flexes, weight $2$ & all $84$, weight $1$ & $168$, weight $1$ & $276$\tabularnewline
Picard $\Pic$ \cite{KamelElshareef} & $4$ hyperflexes, weight $5$; $16$ ordinary flexes, weight $4$ & $24$ of the $72$, weight $1$ & $192$, weight $1$ & $236$\tabularnewline
general quartic & none & none & $300$, weight $1$ & $300$\tabularnewline
\bottomrule
\end{tabular}
\caption{The $3$-Weierstrass divisor $\W_3$ of the quartics studied here. In each row the weighted total is $300$. In every example, the sextactic points in $\W_3$ are exactly those lying on axes of involutions, and the flexes in $\W_3$ are the hyperflexes and the points on axes of homologies of order three ---except for the Klein quartic, whose flexes have weight two for a non-symmetric reason.}
\label{tab:summary}
\end{table}

For the Klein quartic $\Klein:\ \Phi_4=X^3Y+Y^3Z+Z^3X=0$, with Klein's invariants
$\Phi_6,\Phi_{14},\Phi_{21}$, we prove in Theorem~\ref{thm:klein} that
\begin{equation}\label{eq:klein-intro}
  \Omega_3\equiv c\,\Phi_6^{2}\,\Phi_{21}\,\bigl(3\Phi_{14}^{3}+21962\,\Phi_6^{7}\bigr)
  \pmod{\Phi_4},\qquad c\in\CC^*.
\end{equation}
The invariant $\Phi_6^2\Phi_{21}^3$, which also has degree $75$ and would also give total weight $300$, \textbf{is not the right answer}, see Remark~\ref{rem:naive}. Here the symmetry explains the $84$ sextactic points of weight one, but not the weight two at the flexes: by \cite{KamelElshareef} an ordinary flex has $3$-weight $0$, $2$ or $4$,
and on $\Klein$ the value $2$ is realised by the cubic $XZ^2+Y^3$ at $(0:0:1)$.

\paragraph{Computations.} Every computational statement in this paper is checked by the \texttt{SINGULAR} \cite{Singular} scripts described in Appendix~\ref{app:comp}: the file \sing{omega.sing} implements the Hamiltonian formula for $\Omega_n$, tests for proportionality modulo $F$ and for transversality, and a routine for vanishing sequences; the scripts
\sing{general.sing}, \sing{fermat.sing}, \sing{kk.sing}, \sing{klein.sing},
\sing{picard.sing} and \sing{generic.sing} verify the statements of the corresponding sections, and \sing{groups.sing} those on automorphism groups and their homologies.

\paragraph{Organisation.} Section~\ref{sec:prelim} recalls Weierstrass points of linear series. Section~\ref{sec:wronskian} treats the third-Veronese Wronskian, for rational curves and then, via the Hamiltonian and projective formulas, for smooth quartics. Section~\ref{sec:local} contains the local analysis: monomial orders, contact cubics and the classification of \cite{KamelElshareef}, and homologies. Sections~\ref{sec:fermat}--\ref{sec:picard} treat the Fermat, Komiya--Kuribayashi, Klein and Picard quartics, and Section~\ref{sec:comparison} compares them and lists
open questions.

\section{Weierstrass points of the systems \texorpdfstring{$|\OO_C(n)|$}{|O(n)|}}\label{sec:prelim}

\subsection{Linear series}

Let $C$ be a smooth projective curve of genus $g$ over $\CC$, $L$ a line bundle of degree $e$ on $C$ and $V\subseteq H^0(C,L)$ a subspace of dimension $r+1$. For $P\in C$, the orders $\ord_P(s)$ of the non-zero sections $s\in V$ take exactly $r+1$
distinct values \[ a_0(P)<a_1(P)<\cdots<a_r(P),\]
the \emph{vanishing sequence} of $V$ at $P$. In characteristic zero it equals $0,1,\ldots,r$ at all but finitely many points. The \emph{weight} of $P$ is
\[ w_V(P)=\sum_{i=0}^{r}\bigl(a_i(P)-i\bigr)\ \ge 0,\] and $P$ is a \emph{Weierstrass point} of $V$ if $w_V(P)>0$, i.e.\ if some non-zero section of $V$ vanishes to order at least $r+1$ at $P$. In terms of the \emph{gap sequence} $n_i=a_{i-1}(P)+1$ $(1\le i\le r+1)$ the weight is $\sum_i(n_i-i)$ -- this is
the language presented used in \cite{FarkasKra,KamelElshareef}.

If $s_0,\ldots,s_r$ is a basis of $V$, $t$ a local coordinate and the $s_j$ are written as functions in a local frame of $L$, the Wronskian $\det\bigl(d^is_j/dt^i\bigr)_{0\le i,j\le r}$ glues to a non-zero section
\[ \Wr(V)\in H^0\bigl(C,\,L^{\otimes(r+1)}\otimes K_C^{\otimes r(r+1)/2}\bigr),\]
well-defined up to a non-zero scalar, with $\ord_P\Wr(V)=w_V(P)$ for every $P$. Taking degrees gives the Pl\"ucker (or Brill--Segre) formula \cite{ACGH,Laksov}:
\begin{equation}\label{eq:plucker}
  \sum_{P\in C}w_V(P)=(r+1)e+r(r+1)(g-1).
\end{equation}
The divisor $\W(V)=\sum_P w_V(P)P$ is the \emph{Weierstrass divisor} of $V$.

\subsection{Smooth plane quartics}

From now on, unless stated otherwise, $C=\V(F)\subset\PP^2$ is a smooth plane quartic. Then $g(C)=3$ and, by adjunction, $K_C\simeq\OO_C(1)$. Let $S_n=\CC[X,Y,Z]_n$. Plane
curves are projectively normal, so restriction $S_n\to H^0(C,\OO_C(n))$ is surjective
with kernel $F\cdot S_{n-4}$. We write
\[ V_n=H^0(C,\OO_C(n))=H^0(C,nK_C)=S_n/F\cdot S_{n-4},\qquad
  N_n=\dim V_n=\begin{cases}3,&n=1,\\ 4n-2,&n\ge2,\end{cases}\]
so that $N_n=\binom{n+2}{2}$ for $n\le3$, and $r_n=N_n-1$. We write $a^{(n)}_i(P)$, $w_n(P)$ and $\W_n(C)$ for the vanishing sequence, the weight and the Weierstrass divisor of $V_n$, and call the points of $\operatorname{supp}\W_n(C)$ the \emph{$n$-Weierstrass points} of $C$. Via $V_n\simeq H^0(C,(\Omega^1_C)^{\otimes n})$ these are the classical Weierstrass points of holomorphic $n$-differentials, which is the definition used in \cite{FarkasKra,KamelElshareef}.

\begin{proposition}\label{prop:degrees}
For every smooth plane quartic, $\deg\W_1(C)=24$, $\deg\W_2(C)=108$, $\deg\W_3(C)=300$ and $\deg\W_4(C)=588$.
\end{proposition}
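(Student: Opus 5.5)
The plan is to apply the Plücker formula \eqref{eq:plucker} to $V=V_n=H^0(C,\OO_C(n))$ with $g=3$. The only inputs needed are the degree $e$ of $L=\OO_C(n)$ and the dimension $r+1=N_n$ of $V_n$.

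\emph{Degree.} Since $C$ is a plane quartic, $\deg\OO_C(1)=4$, so $e=4n$. Equivalently, $K_C\simeq\OO_C(1)$ has degree $2g-2=4$.

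\emph{Dimension.} By projective normality, $V_n=S_n/F\cdot S_{n-4}$, so
\[
N_n=\binom{n+2}{2}-\binom{n-2}{2},
\]
with the second term read as $0$ for $n<4$. This gives $N_1=3$, $N_2=6$, $N_3=10$ and $N_4=15-1=14$, in agreement with $4n-2$ for $n\ge2$. Alternatively, for $n\ge2$ Riemann--Roch applies because $\deg nK_C>2g-2$, giving $h^0(nK_C)=4n-2$. For $n=1$ one has $h^0(K_C)=g=3$.

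\emph{Substitution.} The Plücker formula gives $\deg\W_n(C)=(r+1)e+r(r+1)(g-1)=4nN_n+2r(r+1)$.
\begin{itemize}
\item $n=1$: $r=2$, so $12+12=24$.
\item $n=2$: $r=5$, so $48+60=108$.
\item $n=3$: $r=9$, so $120+180=300$.
\item $n=4$: $r=13$, so $224+364=588$.
\end{itemize}

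The only point requiring care is that \eqref{eq:plucker} is stated for an arbitrary subspace $V$. We need $V_n$ to be the complete series $H^0(C,\OO_C(n))$ and not merely the image of $S_n$. This is exactly the surjectivity supplied by projective normality, which is recalled above. There is no genuine obstacle; the result is a direct computation.
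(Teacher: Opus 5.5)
Your proposal is correct and follows the paper's proof exactly: apply the Pl\"ucker formula \eqref{eq:plucker} with $g=3$, $e=4n$ and $r+1=N_n$, i.e.\ $(r,e)=(2,4),(5,8),(9,12),(13,16)$. You also justify $N_n$ (via projective normality or Riemann--Roch), which the paper recalls just before the statement, and your arithmetic is right.
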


\begin{proof}
Apply \eqref{eq:plucker} with $g=3$ and $(r,e)=(2,4),(5,8),(9,12),(13,16)$, respectively.
\end{proof}

For $n\ge 2$ these numbers agree with the classical count $(2n-1)^2g(g-1)^2$ of $n$-fold Weierstrass points \cite[Ch.~III]{FarkasKra}.

\begin{proposition}\label{prop:contact}
Let $P\in C$ and $n\ge1$. The vanishing sequence of $V_n$ at $P$ is the set of intersection multiplicities $I_P(C,D)$, where $D$ runs over the plane curves of degree $n$ not containing $C$. Consequently $P$ is an $n$-Weierstrass point if and
only if there is such a curve $D$ with $I_P(C,D)\ge N_n$; for $n=1,2,3$ this means $I_P(C,D)\ge 3$, $6$, $10$, respectively.
\end{proposition}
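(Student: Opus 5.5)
The plan is to identify each section of $V_n$ with a plane curve of degree $n$ and then compute its order of vanishing at $P$ as an intersection multiplicity. By projective normality, recalled above, restriction $S_n\to V_n$ is surjective with kernel $F\cdot S_{n-4}$. So every non-zero $s\in V_n$ is the restriction $G|_C$ of some $G\in S_n\setminus F\cdot S_{n-4}$, that is, of a form $G$ with $\V(G)=D$ a curve of degree $n$ not containing $C$. Conversely, every such $G$ gives a non-zero section. Changing the lift $G$ by a multiple of $F$ changes neither $s$ nor $I_P(C,D)$.

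The key local step is the identity $\ord_P(G|_C)=I_P(C,\V(G))$.
\begin{enumerate}[label=(\roman*)]
\item Choose a linear form $L$ with $L(P)\neq0$; then $L$ gives a local frame of $\OO_C(n)$ near $P$ via $L^n$.
\item In this frame $s$ is represented by the regular function $g=G/L^n$ on the affine chart, and $\ord_P(s)=\ord_P(g)$.
\item Since $C$ is smooth at $P$, the local ring $\OO_{C,P}=\OO_{\PP^2,P}/(f)$ is a DVR.
\item The intersection multiplicity is $I_P(C,D)=\dim_\CC \OO_{\PP^2,P}/(f,g)=\dim_\CC \OO_{C,P}/(g)$.
\item For a DVR this dimension equals the valuation $\ord_P(g)$.
\end{enumerate}
This is the standard fact that the intersection multiplicity with a smooth branch is the order of vanishing of the restricted equation. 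Hence the set of orders $\{\ord_P(s):s\in V_n\setminus 0\}$, which is by definition the vanishing sequence, is exactly the set of values $I_P(C,D)$.

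For the consequence, $P$ is Weierstrass iff $a_r(P)>r$, i.e.\ iff some section vanishes to order at least $r_n+1=N_n$. By the identity above this means some $D$ has $I_P(C,D)\ge N_n$. Finally, substituting $N_1=3$, $N_2=6$ and $N_3=10$ from the formula for $N_n$ gives the stated thresholds.

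I expect no real obstacle here. The only points needing care are that the section is well defined independently of the lift $G$, and that we use smoothness of $C$ at $P$ to equate the intersection multiplicity with the valuation.
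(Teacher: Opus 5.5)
Your proposal is correct and follows the same route as the paper. Projective normality identifies the non-zero sections with forms $G\in S_n\setminus F\cdot S_{n-4}$, smoothness of $C$ at $P$ gives $\ord_P(G|_C)=I_P(C,\V(G))$, and $P$ is Weierstrass exactly when some section has order at least $N_n$. You simply spell out the DVR computation behind the local identity, which the paper takes as standard.
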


\begin{proof}
The non-zero sections of $V_n$ are the restrictions of the forms $G\in S_n\setminus F\cdot S_{n-4}$, and since $P$ is a smooth point of $C$, $\ord_P(G|_C)=I_P(C,\V(G))$. A section of order $\ge N_n=r+1$ exists if and only if the vanishing sequence differs from $0,1,\ldots,r$.
\end{proof}

\begin{definition}\label{def:types}
Let $P\in C$ and let $T_P$ be the tangent line.
\begin{enumerate}[label=(\alph*)]
\item $P$ is a \emph{flex} if $I_P(C,T_P)\ge 3$; an \emph{ordinary flex} if $I_P(C,T_P)=3$ and a \emph{hyperflex} if $I_P(C,T_P)=4$. \item If $P$ is not a flex, there is a unique conic $D_P$, the \emph{osculating conic}, with $I_P(C,D_P)\ge5$; it is irreducible. $P$ is \emph{sextactic} if $I_P(C,D_P)\ge 6$, and \emph{$s$-sextactic} if $I_P(C,D_P)=5+s$ with $s\in\{1,2,3\}$.
\item $P$ is \emph{dectactic} if it is neither a flex nor sextactic and some cubic $E$ has $I_P(C,E)\ge 10$. Such a cubic $\Sigma_P$ is unique and $P$ is \emph{$t$-dectactic} if $I_P(C,\Sigma_P)=9+t$ with $t\in\{1,2,3\}$.
\end{enumerate}
\end{definition}

The terminology in (b) and (c) is that of \cite{KamelElshareef}. The cubic in (c) is automatically irreducible: at a point that is neither a flex nor sextactic, a line has
contact at most $2$ and a conic at most $5$, so a reducible cubic has contact at most $7$. It is also unique: if $E\neq E'$ were two such cubics, both irreducible, the
ultrametric inequality applied to their branches at $P$ would give
$I_P(E,E')\ge 10>9$, contrary to B\'ezout's theorem. The same count shows
\cite[Lemma~7]{KamelElshareef} that a reducible cubic with contact $\ge 10$ exists only at a hyperflex ($T_P^3$, contact $12$) or at a $3$-sextactic point ($D_P\cdot T_P$, contact $10$). By Proposition~\ref{prop:contact}, $\W_1(C)$ is supported on the flexes, $\W_2(C)$ on the flexes and the sextactic points (every flex lies in $\W_2(C)$ by Lemmas~\ref{lem:hyperflex} and~\ref{lem:flex} below), and
\[\operatorname{supp}\W_3(C)\subseteq
  \{\textup{flexes}\}\cup\{\textup{sextactic points}\}\cup\{\textup{dectactic points}\},\]
where the dectactic points are exactly the $3$-Weierstrass points that are neither flexes nor sextactic. Whether a given flex or sextactic point lies in $\W_3(C)$ is a genuine question, answered locally in Section~\ref{sec:local}.

\section{The third-Veronese Wronskian}\label{sec:wronskian}

Order the ten cubic monomials as
\[ \begin{array}{lllll}
M_0=X^3, & M_1=X^2Y, & M_2=X^2Z, & M_3=XY^2, &
M_4=XYZ, \\ M_5=XZ^2, & M_6=Y^3, & M_7=Y^2Z,&
M_8=YZ^2, & M_9=Z^3,
\end{array}\] and let $\nu_3 (M_0:\cdots:M_9):\PP^2\to\PP^9$ be the third Veronese embedding. The hyperplane $\sum_i\lambda_iT_i=0$ of $\PP^9$ pulls back to the cubic $\sum_i\lambda_iM_i=0$, and for a curve $C\subset\PP^2$ the contact order of a cubic
$E$ with a branch of $C$ equals the contact order of the corresponding hyperplane with the corresponding branch of $\nu_3(C)$. Thus third-order Weierstrass points of $C$ are the Weierstrass points (hyperosculation points) of $\nu_3(C)\subset\PP^9$ with respect to hyperplanes, exactly as sextactic points are related to $\nu_2(C)\subset\PP^5$ in \cite{MaugestenMoe}.

\subsection{Rational curves}

Let $\varphi(\varphi_0:\varphi_1:\varphi_2):\PP^1\to C\subset\PP^2$ be a birational parametrisation of an irreducible plane curve of degree $d$, where the $\varphi_i$ are binary forms of degree $d$ without common zero, and put
\[ F_i(s,t)=M_i\bigl(\varphi_0(s,t),\varphi_1(s,t),\varphi_2(s,t)\bigr),\qquad 0\le i\le 9,\]
binary forms of degree $3d$.

\begin{definition}
The \emph{third-Veronese Wronskian} of $\varphi$ is the binary form
\[ \Xi_3(s,t)=\det\Bigl(\frac{\partial^9F_i}{\partial s^{9-j}\,\partial t^{j}}\Bigr)_{0\le i,j\le 9}.\]
Up to a non-zero constant it is the homogenisation of degree $10(3d-9)$ of the ordinary Wronskian $\det\bigl(\frac{d^j}{du^j}F_i(u,1)\bigr)_{0\le i,j\le 9}$.
\end{definition}

\begin{theorem}
\label{thm:rational}
Let $C$ be an irreducible rational plane curve of degree $d\ge 4$ with birational parametrisation $\varphi$. Then $F_0,\ldots,F_9$ are linearly independent, $\Xi_3$ is a non-zero form of degree
\[ 10(3d-9)=30d-90, \]
and for every $q=(s_0:t_0)\in\PP^1$ the order of vanishing of $\Xi_3$ at $q$ equals the weight at $q$ of the linear series $\Span(F_0,\dots,F_9)$. In particular, the following are equivalent:
\begin{enumerate}[label=\rm(\roman*)]
\item $\Xi_3(s_0,t_0)=0$;
\item there is a plane cubic $E$ whose contact order with the branch of $C$ at $\varphi(q)$, i.e.\ $\ord_q\bigl(E\circ\varphi\bigr)$, is at least $10$.
\end{enumerate}
If $\varphi(q)$ is a smooth point of $C$, the contact order in (ii) is the intersection multiplicity $I_{\varphi(q)}(C,E)$.
\end{theorem}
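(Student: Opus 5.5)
The plan is to pull back the linear series of cubics to $\PP^1$ and apply the general Wronskian theory of Section~\ref{sec:prelim} with $g=0$.

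\emph{Step 1: linear independence.} Suppose $\sum_i\lambda_iF_i=0$ with some $\lambda_i\ne0$. Put $E=\V(\sum_i\lambda_iM_i)$, a cubic. Then $E\circ\varphi\equiv0$, so $\varphi(\PP^1)=C\subseteq E$. Since $C$ is irreducible of degree $d\ge4>3$, B\'ezout forbids $C\subseteq E$ unless $C$ is a component of $E$, which is impossible. Hence the $F_i$ span a $10$-dimensional subspace $V=\Span(F_0,\dots,F_9)\subseteq H^0(\PP^1,\OO(3d))$, so $r=9$ and $e=3d$.

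\emph{Step 2: the Wronskian.} In characteristic zero, the ordinary Wronskian $w(u)=\det\bigl(\frac{d^j}{du^j}F_i(u,1)\bigr)$ of linearly independent polynomials is non-zero. By the general theory, $\Wr(V)$ is a non-zero section of $\OO(3d)^{\otimes10}\otimes K_{\PP^1}^{\otimes45}=\OO(30d-90)$, so its degree is $10(3d-9)$, consistent with \eqref{eq:plucker} for $g=0$. It remains to identify $\Xi_3$ with $\Wr(V)$ up to a constant. I would use Euler's formula for binary forms of degree $m=3d$: $s\partial_sG+t\partial_tG=mG$. Applied repeatedly, it shows that on $t\ne0$, after the row operations permitted in the determinant, the matrix of all ninth-order partials $\partial_s^{9-j}\partial_t^jF_i$ is related to the matrix of the derivatives $\partial_s^kF_i$ for $k=0,\dots,9$ by a triangular change of basis. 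The diagonal entries of this change are non-zero constants times powers of $t$. Concretely, $\partial_t$ can be expressed through $\partial_s$ and multiplication by $m$ via $t\partial_t=m-s\partial_s$. Iterating this, each $\partial_s^{9-j}\partial_t^j F$ is $t^{-j}$ times a combination of $\partial_s^{k}F$ with $k\ge 9-j$, with polynomial coefficients in $s$. The leading coefficient is $\prod_{l=0}^{j-1}(m-9+j-l)\ne0$ up to sign, since $m=3d\ge12>9$. This gives $\Xi_3(u,1)=c\,w(u)$ with $c\ne0$. The same argument with $s,t$ swapped covers $s\ne0$. Since $\Xi_3$ is visibly a form of degree $10(3d-9)$, it is the homogenisation of $c\,w$.

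\emph{Step 3: orders and the equivalence.} At $q$, choose a local coordinate and trivialise $\OO(3d)$. Then $\ord_q\Wr(V)=w_V(q)=\sum_i(a_i(q)-i)$, which is the standard triangularisation fact recalled in Section~\ref{sec:prelim}. Thus $\Xi_3(q)=0$ if and only if $w_V(q)>0$, that is, if and only if some non-zero section $\sum\lambda_iF_i$ vanishes to order at least $10$ at $q$. Such a section is exactly $E\circ\varphi$ for the cubic $E=\V(\sum\lambda_iM_i)$, which is non-zero by Step 1; this proves (i)$\Leftrightarrow$(ii). Finally, if $\varphi(q)$ is a smooth point, then because $\varphi$ is birational, $q$ is the unique preimage and $\varphi$ is a local isomorphism onto the branch there. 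So $\ord_q(E\circ\varphi)=I_{\varphi(q)}(C,E)$, exactly as in the proof of Proposition~\ref{prop:contact}.

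The main obstacle is Step 2: the careful bookkeeping that identifies the mixed-partial determinant $\Xi_3$ with the affine Wronskian up to a non-zero constant. In particular, one must check that the constants $m-9+j-l$ never vanish, which is where $3d>9$ is used.
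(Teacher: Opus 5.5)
Your proposal is correct and follows the paper's proof. Independence holds because no cubic contains $C$, and the Wronskian of the pulled-back series is then a non-zero section of $\OO_{\PP^1}(30d-90)$ whose order at $q$ is the weight, the non-zero sections being exactly the forms $E\circ\varphi$. The only addition is your Euler-formula computation showing that $\Xi_3(u,1)$ is a non-zero constant multiple of the affine Wronskian (the constant is $\pm\prod_{j=0}^{9}(m-9+j)!/(m-9)!$ with $m=3d$), which the paper simply asserts in the definition of $\Xi_3$.
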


\begin{proof}
A cubic form vanishing identically on $C$ would contain $C$, which is impossible for $d\ge 4$; hence the $F_i$ are independent. They span a base-point-free linear series $g^9_{3d}$ on $\PP^1$ (the cubics have no common zero), namely the pull-back of the
hyperplane series of $\nu_3$. Its Wronskian is a non-zero section of $\OO_{\PP^1}(10\cdot 3d)\otimes K_{\PP^1}^{\otimes 45}=\OO_{\PP^1}(30d-90)$, in
accordance with \eqref{eq:plucker} for $g=0$, and it vanishes at $q$ to order equal to the weight of the series at $q$; the non-zero sections are the forms $E\circ\varphi$. The completeness of the series is not needed.
\end{proof}

\begin{remark}
For $d\le 3$ the curve lies on a cubic, the $F_i$ are dependent and $\Xi_3\equiv 0$; one must then work with the restricted series rather than with all plane cubics. For $d\ge 4$ the independence hypothesis is automatic.
\end{remark}

\subsection{Smooth quartics: affine, Hamiltonian and projective formulas}

For a smooth quartic $C=\V(F)$, $\W_n(C)$ is the divisor of $\Wr(V_n)\in H^0(C,\OO_C(n)^{\otimes N_n}\otimes K_C^{\otimes r_n(r_n+1)/2})=H^0(C,\OO_C(d_n))$ with \[ d_n=nN_n+\tfrac{r_n(r_n+1)}{2}=6,\ 27,\ 75,\ 147\qquad(n=1,2,3,4), \] and $4d_n=\deg\W_n(C)$. Throughout, $M_0,\dots,M_{r}$ ($r=r_n$) denotes a basis of $V_n$ consisting of monomials of degree $n$. For $n\le3$ all of them; for $n=4$ all quartic monomials but one, e.g.\ the standard monomials with respect to a Gr\"obner basis of $(F)$. Let $b_j=M_j(x,y,1)$.

\begin{lemma}\label{lem:affine}
Let $f(x,y)=F(x,y,1)$ and let $D=\partial_x-(f_x/f_y)\,\partial_y$ be differentiation with respect to $x$ along $C$. We put
\[ W_x^{(n)}=\det\bigl(D^ib_j\bigr)_{0\le i,j\le r_n}.\]
There is a form $\Omega_n\in S_{d_n}$, unique modulo $F\cdot S_{d_n-4}$ and up to a non-zero scalar, such that on the affine part of $C$
\begin{equation}\label{eq:affine}
  \Omega_n(x,y,1)=W_x^{(n)}\cdot f_y^{\,r_n(r_n+1)/2},
\end{equation}
and $\divisor(\Omega_n|_C)=\W_n(C)$. For $n=1$, $\Omega_1$ is a non-zero multiple of the Hessian.
\end{lemma}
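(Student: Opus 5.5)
The plan is to identify the affine Wronskian with the section $\Wr(V_n)$ written in explicit local frames, and then to show that the correction factor $f_y^{r_n(r_n+1)/2}$ is exactly what turns it into a polynomial of degree $d_n$.

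\textbf{Step 1: local meaning of $W_x^{(n)}$.} On the affine chart $Z\neq0$ I trivialise $\OO_C(n)$ by $Z^n$, so the basis $M_j$ of $V_n$ becomes the functions $b_j$. At points of $C$ with $f_y\neq0$, $x$ is a local coordinate and $D$ is $d/dx$ along $C$. Hence $W_x^{(n)}$ is the local expression of $\Wr(V_n)$ in the frame $Z^{nN_n}\otimes (dx)^{\otimes r_n(r_n+1)/2}$. By the general theory recalled in Section~\ref{sec:prelim}, its order of vanishing at such points is $w_n(P)$.

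\textbf{Step 2: change to the frame $Z^{d_n}$.} On a smooth quartic the form $\omega=dx/f_y=-dy/f_x$ is the standard generator of $K_C\simeq\OO_C(1)$. Under this isomorphism $\omega$ corresponds to $Z$, up to a constant; this is the adjunction formula in the classical residue form, and the section is regular and nowhere vanishing on the affine part, including where $f_y=0$, because $f_x,f_y$ do not vanish simultaneously. Thus
\[
\Wr(V_n)=W_x^{(n)}\,Z^{nN_n}\otimes(dx)^{\otimes r_n(r_n+1)/2}
= W_x^{(n)}f_y^{\,r_n(r_n+1)/2}\,Z^{nN_n}\otimes\omega^{\otimes r_n(r_n+1)/2}.
\]
So $W_x^{(n)}f_y^{r_n(r_n+1)/2}$ is the expression of the global section $\Wr(V_n)\in H^0(C,\OO_C(d_n))$ in the frame $Z^{d_n}$.

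\textbf{Step 3: lift to a form.} By projective normality (Section~\ref{sec:prelim}), $S_{d_n}\to H^0(C,\OO_C(d_n))$ is surjective with kernel $F\cdot S_{d_n-4}$. This gives a form $\Omega_n$, unique modulo $F$ and up to the scalar ambiguity of $\Wr$, with $\Omega_n(x,y,1)$ equal to that function on the affine part of $C$. We then have $\divisor(\Omega_n|_C)=\divisor\Wr(V_n)=\W_n(C)$ globally, including at infinity and at points with $f_y=0$.

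As a sanity check, one can make Step 2 concrete: the product $W_x^{(n)}f_y^{r_n(r_n+1)/2}$ is a polynomial modulo $f$. Indeed, $f_yD=\delta$ is the Hamiltonian derivation, and by induction $f_y^{2i-1}D^i b$ is polynomial. Multiplying row $i$ by $f_y^{2i-1}$ costs $f_y^{r^2}$, which exceeds $r(r+1)/2$, so this check alone does not suffice; this is why I rely on the intrinsic frame argument rather than on bookkeeping.

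\textbf{Step 4: the case $n=1$.} Here $\W_1$ is the flex divisor and $\deg\Omega_1=6=3(d-2)$. The Hessian $H$ restricted to $C$ also has divisor equal to the flexes, with the same weights $I_P(C,T_P)-2$, and both are sections of $\OO_C(6)$. Their ratio is therefore a nowhere-vanishing regular function on $C$, hence constant. The identity $\Omega_1\equiv cH \pmod F$ then follows from the kernel description in Step 3.

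The main obstacle is Step 2: one must justify that $Z\leftrightarrow dx/f_y$ gives the isomorphism $\OO_C(1)\simeq K_C$ with no zeros or poles on the affine part. The point is to check that $dx/f_y$ has no poles where $f_y=0$, by using $-dy/f_x$ there, and then to check the behaviour at the points of the line $Z=0$ by degree count: the divisor has degree $4=\deg K_C$ and is supported at infinity, hence equals $\divisor(Z)$.
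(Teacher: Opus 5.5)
Your argument is essentially the paper's proof. Both trivialise $\OO_C(n)$ by $Z^n$, write $\Wr(V_n)=W_x^{(n)}(dx)^{\otimes r_n(r_n+1)/2}\otimes(Z^n)^{\otimes N_n}$, substitute $dx=f_y\,\omega_0$ with $\omega_0=dx/f_y\leftrightarrow Z$, lift by projective normality, and for $n=1$ compare the divisors of $\Omega_1$ and the Hessian.

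One correction concerns your closing paragraph. The degree count there does not prove $\divisor(dx/f_y)=\divisor(Z)$, because a degree-$4$ divisor supported on $C\cap\V(Z)$ is not determined by its degree. On the Fermat quartic the four points at infinity are hyperflexes, and $4P_1$, cut out by the tangent line at $P_1$, is effective, of degree $4$, supported at infinity and even linearly equivalent to $\divisor(Z)$, yet it differs from $\divisor(Z)$. The identification has to come from the classical description of the regular differentials as $G(x,y,1)\,dx/f_y$ with $G$ linear, which your Step~2 already cites and which is exactly what the paper uses, or from a direct local computation at the points at infinity.
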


\begin{proof}
On $U=C\cap\{Z\ne 0\}$ use the frame $Z^n$ of $\OO_C(n)$, so that the sections of $V_n$ become the functions $b_j$. The differential $\omega_0=dx/f_y$ is regular and nowhere zero on $U$ (where $f_y=0$ it equals $-dy/f_x$), and under the adjunction isomorphism $K_C\simeq\OO_C(1)$ it corresponds to the section $Z$: the canonical differentials of $C$ are $G(x,y,1)\,dx/f_y$ with $G$ linear. On $U$ the
Wronskian is $W^{(n)}_x\,(dx)^{\otimes r(r+1)/2}\otimes(Z^n)^{\otimes(r+1)}$ with $r=r_n$, and $dx=f_y\,\omega_0$. Hence, in the frame $Z^{d_n}$ of $\OO_C(d_n)$, $\Wr(V_n)$ is represented by $W^{(n)}_xf_y^{\,r(r+1)/2}$. By projective normality this global section is the restriction of a form $\Omega_n$ of degree $d_n$, unique modulo $F$.
For $n=1$ both $\Omega_1$ and the Hessian are forms of degree $6$ whose restrictions have divisor equal to the flex divisor, so they agree modulo $F$ up to a scalar.
\end{proof}

The rational functions $D^ib_j$ have powers of $f_y$ in their denominators. They disappear if we differentiate along the \emph{Hamiltonian vector field} of $f$.

\begin{proposition}
\label{prop:hamilton}
Let $\delta=f_y\,\partial_x-f_x\,\partial_y$. Then $\delta(f)=0$, so $\delta$ is a derivation of $\CC[x,y]/(f)$, and
\[\Omega_n(x,y,1)\equiv\det\bigl(\delta^ib_j\bigr)_{0\le i,j\le r_n}\pmod f .\]
\end{proposition}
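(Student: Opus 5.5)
First, $\delta(f)=f_yf_x-f_xf_y=0$. Hence $\delta$ maps the ideal $(f)$ into itself: $\delta(gf)=\delta(g)f$. So $\delta$ descends to a derivation of $\CC[x,y]/(f)$, and $\det(\delta^ib_j)$ is well defined modulo $f$. The plan is to compare $\delta$ with the derivation $D$ of Lemma~\ref{lem:affine}. On the function field of $C$ we have $\delta=f_y D$, because
\[ f_yD=f_y\partial_x-f_x\partial_y. \]
Write $h=f_y$, viewed as a rational function on $C$. The goal is to show
\[\det\bigl((hD)^ib_j\bigr)=h^{r(r+1)/2}\det\bigl(D^ib_j\bigr)\]
in the function field, with $r=r_n$. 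Combined with \eqref{eq:affine}, this gives equality of $\det(\delta^ib_j)$ and $\Omega_n(x,y,1)$ as rational functions on $C$. Both sides are polynomials in $x,y$, so they then agree modulo $f$, since $(f)$ is prime and $\CC[x,y]/(f)$ embeds in the function field.

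The key step is the identity above, a standard change of variable for Wronskians. By induction on $i$, I will show that
\[(hD)^i=h^iD^i+\sum_{k<i}c_{ik}D^k\]
with coefficients $c_{ik}$ in the function field that do not depend on $j$. Indeed, $(hD)^{i+1}=hD\circ(hD)^i$. Expanding with the Leibniz rule gives $h^{i+1}D^{i+1}$ plus terms $hD(h^i)D^i$, $hc_{ik}D^{k+1}$ and $hD(c_{ik})D^k$, all of order at most $i$.

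It follows that the matrix $\bigl((hD)^ib_j\bigr)_{i,j}$ equals $U\cdot\bigl(D^ib_j\bigr)_{i,j}$, where $U$ is lower triangular with diagonal entries $h^0,h^1,\dots,h^r$. Taking determinants gives the factor $h^{0+1+\cdots+r}=h^{r(r+1)/2}$, which proves the identity.

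The main obstacle is only bookkeeping. One must make sure that $D$ is a genuine derivation of the function field $\CC(C)$, which it is, being the derivation $d/dx$ extended from $\CC(x)$. One must also make sure that passing from the function field back to the ring $\CC[x,y]/(f)$ is legitimate. This holds because $f$ is irreducible: $C$ is smooth, so it is irreducible, and the affine chart of an irreducible curve is nonempty. No point-by-point analysis at the zeros of $f_y$ is needed, since the argument takes place entirely in the function field.
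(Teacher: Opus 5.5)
Your proof is correct and is essentially the paper's argument: write $\delta=f_yD$ on the function field, expand $\delta^i=f_y^iD^i+\sum_{k<i}c_{ik}D^k$ by induction, extract the factor $f_y^{r(r+1)/2}$ from the triangular change of rows, and conclude with \eqref{eq:affine}. Your extra remark that $\CC[x,y]/(f)$ embeds in $\CC(C)$, because $f$ is irreducible, makes explicit the passage from an identity of rational functions to a congruence modulo $f$, which the paper leaves implicit.
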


\begin{proof}
As derivations of the function field of $C$ we have $\delta=f_y D$. By induction, $\delta^i=f_y^iD^i+\sum_{k<i}c_{ik}D^k$ with rational functions $c_{ik}$ that do not depend on the function to which the operator is applied. Subtracting multiples of earlier rows we get $\det(\delta^ib_j)=f_y^{0+1+\dots+r}\det(D^ib_j)=f_y^{r(r+1)/2}W^{(n)}_x$, and \eqref{eq:affine} applies.
\end{proof}

The derivation $\delta$ is the dehomogenisation of a projective one. For a linear form $L$ and $G\in S_m$ put
\[ J_L(G)=\det\begin{pmatrix}\nabla L\\ \nabla F\\ \nabla G\end{pmatrix}\in S_{m+2},\]
a derivation of $S$ with $J_L(F)=0$, so $J_L$ preserves the ideal $(F)$.

\begin{proposition}
\label{prop:projective}
For every linear form $L\neq0$ and every $n\ge1$,
\[ \det\bigl(J_L^{\,i}M_j\bigr)_{0\le i,j\le r_n}\equiv(-1)^{r_n(r_n+1)/2}\,L^{\,r_n(r_n+1)/2}\,\Omega_n \pmod F, \]
where $\Omega_n$ is normalised by Proposition~\ref{prop:hamilton}. For $n=1,2,3,4$ the
sign is $-1$.
\end{proposition}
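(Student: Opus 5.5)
The plan is to prove the identity first for $L=Z$ by direct comparison with Proposition~\ref{prop:hamilton}, and then to pass to an arbitrary linear form $L$ by comparing the derivations $J_L$ and $J_Z$ modulo $F$.

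\emph{The case $L=Z$.} Here $\nabla Z=(0,0,1)$, so $J_Z(G)=F_XG_Y-F_YG_X$. This derivation contains no $\partial_Z$, hence it commutes with the substitution $Z=1$. Since $F_X(x,y,1)=f_x$ and $F_Y(x,y,1)=f_y$, we get $J_Z(G)(x,y,1)=f_xg_y-f_yg_x=-\delta(g)$, where $g=G(x,y,1)$. By induction $(J_Z^iM_j)(x,y,1)=(-1)^i\delta^ib_j$. Pulling the factor $(-1)^i$ out of row $i$ gives
\[\det(J_Z^iM_j)(x,y,1)=(-1)^{r(r+1)/2}\det(\delta^ib_j)\equiv(-1)^{r(r+1)/2}\,\Omega_n(x,y,1)\pmod f,\]
using Proposition~\ref{prop:hamilton}.

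Now compare degrees. The left side has degree $\sum_{i=0}^r(n+2i)=n(r+1)+r(r+1)$, and $Z^{r(r+1)/2}\Omega_n$ has degree $d_n+r(r+1)/2$, which is the same number. The two forms therefore agree on the dense open subset $C\cap\{Z\neq0\}$. Since $F$ is irreducible and a form vanishing there vanishes on all of $C$, they are congruent modulo $F$.

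\emph{Changing $L$.} Write $a=\nabla L$, $b=\nabla L'$, $c=\nabla F$ and $x=(X,Y,Z)$, so that $L=a\cdot x$ and $L'=b\cdot x$. Then $J_L(G)=(a\times c)\cdot\nabla G$. The identity $(b\cdot x)a-(a\cdot x)b=x\times(a\times b)$, combined with $(x\times w)\times c=w\,(x\cdot c)-x\,(w\cdot c)$, gives
\[L'(a\times c)-L(b\times c)=(a\times b)(x\cdot c)-x\,\bigl((a\times b)\cdot c\bigr).\]
By Euler, $x\cdot c=4F\equiv0$ and $x\cdot\nabla G=mG$ for $G\in S_m$. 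Dotting with $\nabla G$ therefore yields
\[L'J_L(G)\equiv L\,J_{L'}(G)-m\,\det(a,b,c)\,G\pmod F.\]
So, modulo $F$, $L'J_L$ equals $LJ_{L'}$ plus a multiple of the Euler operator by the fixed form $\det(a,b,c)$.

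Iterating this gives, by induction on $i$,
\[L'^{\,i}J_L^iM_j\equiv L^iJ_{L'}^iM_j+\sum_{k<i}c_{ik}J_{L'}^kM_j,\]
where the coefficient forms $c_{ik}$ do not depend on $j$. This holds because all $M_j$ have the same degree $n$, so every Euler factor that appears is the same for all $j$, and because $J_{L'}$ raises degree uniformly by $2$. Subtracting multiples of earlier rows, as in the proof of Proposition~\ref{prop:hamilton}, gives
\[L'^{\,r(r+1)/2}\det(J_L^iM_j)\equiv L^{r(r+1)/2}\det(J_{L'}^iM_j)\pmod F.\]

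\emph{Conclusion.} Take $L'=Z$ and insert the first step. Both sides become $Z^{r(r+1)/2}$ times the asserted identity. Since $(F)$ is prime and $Z\notin(F)$, we may cancel $Z^{r(r+1)/2}$ modulo $F$, which proves the statement.

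For the sign, $r_n=2,5,9,13$ gives $r_n(r_n+1)/2=3,15,45,91$, all odd, so the sign is $-1$ for $n\le4$.

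The main obstacle is the bookkeeping in the induction. Applying $J_{L'}$ to the lower-order terms $c_{ik}J_{L'}^kM_j$ differentiates the coefficients $c_{ik}$ as well. These contributions must stay of lower order in $J_{L'}$ with coefficients independent of $j$, and the Euler correction must act by a scalar depending only on the degree $n+2k$. I would verify this carefully by tracking degrees.
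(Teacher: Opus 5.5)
Your proof is correct and is essentially the paper's argument. The paper uses the same commutation rule $L'J_L(G)\equiv L\,J_{L'}(G)-m\det(\nabla L,\nabla L',\nabla F)\,G \pmod F$, the same triangular row reduction, and the same dehomogenisation at $Z=1$ against Proposition~\ref{prop:hamilton}; it only differs in deriving the rule for $L'=Z$ from a vanishing $4\times4$ Euler determinant rather than triple products, and in using rational coefficients on $C$ rather than clearing denominators by $L'^{\,i}$. The induction bookkeeping you flag does go through: $J_{L'}(L')=0$ gives $L'^{\,i}J_{L'}(H)=J_{L'}(L'^{\,i}H)$, and $J_{L'}$ preserves $(F)$, so congruences modulo $F$ may be differentiated.
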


\begin{proof}
Let $G\in S_m$. The $4\times4$ matrix with rows $(Z,\nabla Z)$, $(L,\nabla L)$, $(4F,\nabla F)$, $(mG,\nabla G)$ has vanishing determinant, because by Euler's formula its first column is $X$ times the second plus $Y$ times the third plus $Z$ times the
fourth. Expanding along the first column and using
$\det(\nabla Z,\nabla L,\nabla F)=-J_Z(L)$ we get
\begin{equation}\label{eq:euler} Z\,J_L(G)\equiv L\,J_Z(G)-m\,J_Z(L)\,G\pmod F .
\end{equation}
Thus on $C$ the operator $J_L$ acts on forms of degree $m$ as $(L/Z)J_Z-m\,J_Z(L)/Z$. All $M_j$ have degree $n$ and $J_Z^kM_j$ has degree $n+2k$, so
by induction
\[ J_L^iM_j\equiv(L/Z)^iJ_Z^iM_j+\textstyle\sum_{k<i}c_{ik}J_Z^kM_j\quad\text{on }C,\]
with rational functions $c_{ik}$ independent of $j$, and therefore $\det(J_L^iM_j)\equiv(L/Z)^{r(r+1)/2}\det(J_Z^iM_j)$ on $C$. Finally \[ J_Z(G)(x,y,1)=(F_XG_Y-F_YG_X)(x,y,1)=-\delta\bigl(G(x,y,1)\bigr),\]
so $\det(J_Z^iM_j)(x,y,1)=(-1)^{r(r+1)/2}\det(\delta^ib_j)$. Put $\varepsilon=r(r+1)/2$.
The two forms $\det(J_Z^iM_j)$ and $(-1)^{\varepsilon}Z^{\varepsilon}\Omega_n$, both of
degree $d_n+\varepsilon$, agree on $C\cap\{Z\neq0\}$ by Proposition~\ref{prop:hamilton}, hence modulo $F$. For $n=1,2,3,4$ we have $r(r+1)/2=3,15,45,91$.
\end{proof}

\begin{corollary}
\label{cor:covariance}
Let $n\le 3$, $g\in\GL_3(\CC)$, and write $\Omega_n(F)$ for the form attached to $F$.
Then
\[ \Omega_n(F\circ g)\equiv\det(g)^{e_n}\,\Omega_n(F)\circ g\pmod{F\circ g},\qquad
  e_n=\tfrac{r_n(r_n+1)}{2}-\tfrac{nN_n}{3}\in\{2,11,35\}, \]
and $\Omega_n(\lambda F)=\lambda^{r_n(r_n+1)/2}\Omega_n(F)$. In particular, if $g\in\SL_3(\CC)$ and $F\circ g=F$, then $\Omega_n\circ g\equiv\Omega_n\pmod F$. If
$F\circ g=\lambda F$, then $\Omega_3\circ g\equiv\lambda^{45}\det(g)^{-35}\Omega_3$.
\end{corollary}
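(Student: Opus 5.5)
The plan is to deduce the covariance directly from the projective formula of Proposition~\ref{prop:projective}, which expresses $\Omega_n$ through the derivation $J_L$ built from gradients. Gradients transform simply under linear substitutions. Write $F'=F\circ g$ and, for a linear form $L$, put $L'=L\circ g$. Denote by $J_L$ and $J'_{L'}$ the operators attached to $(F,L)$ and to $(F',L')$.

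\textbf{Step 1 (transformation of $J$).} By the chain rule, $\nabla(G\circ g)=(\nabla G\circ g)\,g$ as row vectors. The $3\times3$ matrix with rows $\nabla L',\nabla F',\nabla(G\circ g)$ is therefore the matrix with rows $\nabla L,\nabla F,\nabla G$, evaluated at $g$ and multiplied on the right by $g$. Hence
\[ J'_{L'}(G\circ g)=\det(g)\,\bigl(J_L G\bigr)\circ g, \]
and by induction $J'^{\,i}_{L'}(G\circ g)=\det(g)^i\,(J_L^iG)\circ g$.

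\textbf{Step 2 (change of basis).} For $n\le3$ we have $V_n=S_n$, so the forms $M_j\circ g$ are again a basis of $S_n$. Write $M_j\circ g=\sum_k A_{jk}M_k$, where $A=\operatorname{Sym}^n(g)$. Since $J'_{L'}$ is linear, multilinearity of the determinant in the columns gives
\[ \det\bigl(J'^{\,i}_{L'}(M_j\circ g)\bigr)=\det(A)\,\det\bigl(J'^{\,i}_{L'}M_j\bigr). \]
By Step 1 the left side equals $\det(g)^{\varepsilon}\det(J_L^iM_j)\circ g$, where $\varepsilon=r_n(r_n+1)/2$ and the exponent comes from $0+1+\dots+r_n$.

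We also need $\det\operatorname{Sym}^n(g)=\det(g)^{nN_n/3}$. I would prove this first for diagonal $g$: each variable occurs with total exponent $nN_n/3$ among the monomials of degree $n$, by symmetry. It then extends to all of $\GL_3$, since both sides are polynomial characters of $\GL_3$ and agree on a Zariski-dense set of diagonalisable matrices.

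\textbf{Step 3 (apply the projective formula on both sides).} Proposition~\ref{prop:projective}, applied to $F$ and composed with $g$, and applied again to $F'$, gives
\[ -L'^{\,\varepsilon}\,\Omega_n(F')\;\equiv\;\det(g)^{\varepsilon-nN_n/3}\,\bigl(-L^{\varepsilon}\Omega_n(F)\bigr)\circ g\;=\;-\det(g)^{e_n}L'^{\,\varepsilon}\,\Omega_n(F)\circ g \pmod{F'}. \]
The ideal $(F')$ is prime, because $F'$ is irreducible, and $F'\nmid L'$. So we may cancel $L'^{\varepsilon}$ and obtain the stated formula with $e_n=\varepsilon-nN_n/3$. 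Checking the values: $3-1=2$, $15-4=11$ and $45-10=35$.

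\textbf{Step 4 (scaling and consequences).} Replacing $F$ by $\lambda F$ multiplies $J_L$ by $\lambda$. Hence $\det(J_L^iM_j)$ is multiplied by $\lambda^{\varepsilon}$, and the ideal $(\lambda F)=(F)$ is unchanged. Through Proposition~\ref{prop:hamilton} this gives $\Omega_n(\lambda F)=\lambda^{\varepsilon}\Omega_n(F)$.

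If $F\circ g=F$ and $\det g=1$, the formula reads $\Omega_n\circ g\equiv\Omega_n$. If instead $F\circ g=\lambda F$, then for $n=3$ we get $\lambda^{45}\Omega_3(F)\equiv\det(g)^{35}\,\Omega_3(F)\circ g$, which rearranges to $\Omega_3\circ g\equiv\lambda^{45}\det(g)^{-35}\Omega_3$.

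\textbf{Main obstacle.} The step needing most care is the determinant identity for $\operatorname{Sym}^n(g)$, together with keeping the normalisations consistent. For the latter, one must use the same basis $M_j$ for both $F$ and $F'$, and work with the normalisation of $\Omega_n$ fixed by Proposition~\ref{prop:hamilton}, so that no hidden scalar enters.
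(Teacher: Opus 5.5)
Your proposal is correct and follows the paper's proof essentially step for step: the chain-rule identity $J^{F\circ g}_{L\circ g}(G\circ g)=\det(g)\,(J^F_LG)\circ g$, the change of basis $M_j\circ g=\sum_kA_{jk}M_k$ with $\det A=\det(g)^{nN_n/3}$, Proposition~\ref{prop:projective} applied on both sides, and cancellation of $(L\circ g)^{r_n(r_n+1)/2}$. You also spell out two points the paper leaves implicit (the determinant of $\operatorname{Sym}^n g$ via diagonal matrices, and why the cancellation is allowed, namely that $(F\circ g)$ is prime), and for the scaling you argue through $J_L$ where the paper uses $\delta_{\lambda f}=\lambda\delta_f$, which amounts to the same thing.
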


\begin{proof}
By the chain rule, $J^{F\circ g}_{L\circ g}(G\circ g)=\det(g)\,(J^F_LG)\circ g$. The monomials transform as $M_j\circ g=\sum_kA_{jk}M_k$ with
$\det A=\det(\operatorname{Sym}^ng)=\det(g)^{nN_n/3}$. Hence $\det A\cdot\det\bigl((J^{F\circ g}_{L\circ g})^iM_k\bigr)=\det(g)^{r(r+1)/2} \det(J_L^iM_j)\circ g$, and Proposition ~\ref{prop:projective}, applied to both sides,
gives the claim after cancelling $(L\circ g)^{r(r+1)/2}$. The scaling in $F$ follows from $\delta_{\lambda f}=\lambda\delta_f$.
\end{proof}

For $n=1$ this is the familiar covariance $\Hess(F\circ g)=\det(g)^2\Hess(F)\circ g$.
The script \sing{general.sing} compares $\det(J_L^iM_j)$ with $\Omega_n$ computed by Proposition~\ref{prop:hamilton}, which checks Proposition~\ref{prop:projective} for the quartic $C_0$ of Remark~\ref{rem:general} ($n=1,2$ over $\QQ$, $n=3$ modulo $32003$) and for the Klein quartic ($n=1,2$), and it checks Corollary~\ref{cor:covariance} on $C_0$ for a matrix in $\SL_3(\ZZ)$ and one of determinant $2$, and for $\lambda=3$.

\begin{remark}[Comparison with \cite{KamelElshareef}]\label{rem:KEcomparison}
Kamel and Elshareef compute $W_3=W^{(3)}_x$ and then the divisor $\divisor(\Omega_3)=30\divisor(Z)+\divisor(W_3)+45\divisor(dx)$, which requires the divisors of $x$, $y$, $dx$ and of the line at infinity and a separate discussion of the points at infinity \cite[Section~4]{KamelElshareef}. In our normalisation this is the
identity $\Omega_3(x,y,1)=W_3\,f_y^{45}$ of Lemma~\ref{lem:affine}: since
$dx=f_y\,\omega_0$ and $\omega_0\leftrightarrow Z$, the bookkeeping is absorbed into a single global form, whose factors can be read off directly. For instance, for the Fermat quartic the authors in \cite{KamelElshareef} find $W_3=c\,x^5P_{12}(x)P_{48}(x)/y^{130}$, i.e.\
$\Omega_3(x,y,1)=c'\,x^5y^5P_{12}(x)P_{48}(x)$, the dehomogenisation of $X^5Y^5Z^5\Delta_{12}\Pi_{48}$ in Theorem~\ref{thm:fermat}, and for the Picard quartic they find $W_3=c\,x^5(x^4+9)^4Q_8(x)Q_{64}(x)/y^{86}$, i.e.\
$\Omega_3(x,y,1)=c'x^5y^4(x^4+9)^4Q_8Q_{64}$, the dehomogenisation of the product in Theorem~\ref{thm:picard}.
\end{remark}

\begin{remark}\label{rem:recipe}
Besides the exact formula of Proposition~\ref{prop:hamilton} one can evaluate
\eqref{eq:affine} at points of $C$ using power series: if $x=x_0+t$ and $y=y(t)$ parametrise the branch through $(x_0,y_0)$, then $D^ib_j(x_0,y_0)=i!\,[t^i]\,b_j(x_0+t,y(t))$. The same coefficients give the vanishing sequence at $(x_0,y_0)$: it is the set of pivot columns of the matrix
$\bigl([t^k]\,b_j(x_0+t,y(t))\bigr)_{j,k}$. Equivalently, and without power series: for a form $G$ of degree $n$ we have $\ord_P(G|_C)\ge k$ if and only if $G(x,y,1)\in(f)+\mathfrak m_P^k$, so the number of vanishing orders smaller than $k$ is the rank of the restriction map $V_n\to\CC[x,y]/\bigl((f)+\mathfrak m_P^k\bigr)$. This is how the procedure \sing{vanseq} of \sing{omega.sing} works, over $\QQ$, over number fields and over finite fields.
\end{remark}

\section{Local analysis}\label{sec:local}
\subsection{Monomial orders}

Let $P\in C$. After a projective change of coordinates, $P$ is the origin of the affine chart $Z=1$ and $T_P=\{y=0\}$. Then $x$ is a local coordinate $t$ at $P$ and the branch is $y=\alpha t^k+O(t^{k+1})$ with $\alpha\neq 0$ and $k=I_P(C,T_P)\in\{2,3,4\}$. The affine monomials $x^iy^j$ with $i+j\le n$ restrict to a spanning set of $V_n$ and 
\begin{equation}\label{eq:monorders}
  \ord_P(x^iy^j)=i+kj .
\end{equation}
We use repeatedly that every order attained by a non-zero section belongs to the vanishing sequence, and that if basis elements have pairwise distinct orders, then these orders \emph{are} the vanishing sequence.

\begin{lemma}\label{lem:hyperflex}
If $P$ is a hyperflex ($k=4$), the vanishing sequences of $V_1,V_2,V_3$ at $P$ are
\[ \{0,1,4\},\qquad \{0,1,2,4,5,8\},\qquad \{0,1,2,3,4,5,6,8,9,12\}, \]
so $w_1(P)=2$, $w_2(P)=5$ and $w_3(P)=5$ on every smooth quartic. Moreover, we have
$P\in\W_q(C)$ for every $q\ge1$.
\end{lemma}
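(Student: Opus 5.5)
The plan is to use the normal form of \S\ref{sec:local}: $P$ is the origin of the chart $Z=1$, $T_P=\{y=0\}$, $x$ is a local coordinate at $P$, and since $P$ is a hyperflex the branch is $y=\alpha x^4+O(x^5)$ with $\alpha\neq0$. By \eqref{eq:monorders}, $\ord_P(x^iy^j)=i+4j$. For $n\le3$ we have $N_n=\binom{n+2}{2}$, so $F\cdot S_{n-4}=0$. Hence the affine monomials $x^iy^j$ with $i+j\le n$ are not just a spanning set of $V_n$ but a basis. By the principle recalled before the lemma, it then suffices to check that their orders are pairwise distinct; those orders are the vanishing sequence.

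The orders are as follows.
\begin{itemize}
\item For $n=1$: $1,x,y$ have orders $0,1,4$.
\item For $n=2$: we add $x^2,xy,y^2$ with orders $2,5,8$, giving $\{0,1,2,4,5,8\}$.
\item For $n=3$: we add $x^3,x^2y,xy^2,y^3$ with orders $3,6,9,12$, giving $\{0,1,2,3,4,5,6,8,9,12\}$.
\end{itemize}
In each case the orders are visibly distinct. The weights are $\sum_i(a_i-i)$:
\begin{itemize}
\item $w_1(P)=(0+1+4)-3=2$;
\item $w_2(P)=(0+1+2+4+5+8)-15=20-15=5$;
\item $w_3(P)=(0+1+\dots+6+8+9+12)-45=50-45=5$.
\end{itemize}
Nothing here depends on the particular quartic beyond $k=4$, which gives the claim \emph{on every smooth quartic}.

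For the last assertion I would not compute vanishing sequences for $q\ge4$, where monomials no longer form a basis. Instead I would use Proposition~\ref{prop:contact} directly. Write $T_P=\V(t_P)$ and take the degree-$q$ curve $D=\V(t_P^q)$. It does not contain $C$, because $C$ is an irreducible quartic and not a line. Its contact is $I_P(C,D)=q\,I_P(C,T_P)=4q$. Since $N_1=3$ and $N_q=4q-2$ for $q\ge2$, we get $4q\ge N_q$ for all $q\ge1$. By Proposition~\ref{prop:contact}, $P$ is a $q$-Weierstrass point.

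The only step needing care is the basis claim for $n=3$. One must confirm that the ten cubic monomials are independent on $C$, which holds because $\dim V_3=10$ and no cubic vanishes on a quartic. Without this, distinctness of orders would give only a subset of the vanishing sequence. Everything else is arithmetic.
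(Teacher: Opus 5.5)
Your proposal is correct and follows the paper's proof essentially verbatim: the pairwise distinct orders $i+4j$ of the monomials give the three vanishing sequences and the weights, and the curve $T_P^q$, with contact $4q\ge N_q$, gives $P\in\W_q(C)$ for all $q\ge1$. The independence check you flag at the end is in fact automatic, since sections with pairwise distinct finite orders at $P$ are linearly independent, and $\dim V_n$ equals the number of monomials for $n\le3$.
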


\begin{proof}
By \eqref{eq:monorders} the orders $i+4j$ for $i+j\le 3$ are $0,1,2,3$ $(j=0)$, $4,5,6$ $(j=1)$, $8,9$ $(j=2)$ and $12$ $(j=3)$, pairwise distinct.
Similarly, for $i+j\le 1,2$ the weights are $5-3$, $20-15$ and $50-45$. For the last claim, $T_P^q$ has order $4q\ge N_q$.
\end{proof}

\begin{lemma}\label{lem:flex}
Let $P$ be an ordinary flex ($k=3$). Then the vanishing sequences of $V_1$ and $V_2$
at $P$ are $\{0,1,3\}$ and $\{0,1,2,3,4,6\}$, so $w_1(P)=w_2(P)=1$. The vanishing sequence of $V_3$ at $P$ is
\[ \{0,1,2,3,4,5,6,7,9\}\cup\{m\},\qquad m=8\ \textup{ or }\ m\ge 10, \]
so either $m=8$ and $w_3(P)=0$, or $m\ge10$ and $w_3(P)=m-8\ge 2$. In particular, an ordinary flex is never a $3$-Weierstrass point of weight $1$.
\end{lemma}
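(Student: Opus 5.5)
We work in the coordinates of the previous subsection. The point $P$ is the origin of $Z=1$, $T_P=\{y=0\}$, and the branch is $y=\alpha t^3+O(t^4)$ with $\alpha\ne0$, $t=x$. By \eqref{eq:monorders} we have $\ord_P(x^iy^j)=i+3j$. The strategy is to list these orders, identify the coincidences, and resolve them by forming suitable combinations.

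For $V_1$ the orders are $0,1,3$ (from $1,x,y$). They are distinct, so they form the vanishing sequence and $w_1=1$.

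For $V_2$ the monomials $1,x,x^2,y,xy,y^2$ have orders $0,1,2,3,4,6$. These are again pairwise distinct, and there are exactly six of them, matching $N_2=6$. So this is the vanishing sequence, and $w_2=(0+1+2+3+4+6)-15=1$.

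For $V_3$ the ten monomials with $i+j\le3$ have orders
\[
0,1,2,3 \ (j=0),\quad 3,4,5\ (j=1),\quad 6,7\ (j=2),\quad 9\ (j=3).
\]
The only coincidences are $x^3$ and $y$ (order $3$) and $x^2y$ and $y^2$ (order $6$). The remaining eight monomials $1,x,x^2,y,xy,y^2,xy^2,y^3$ have distinct orders $0,1,2,3,4,6,7,9$, so all of these lie in the vanishing sequence.

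We now adjust the duplicates. Replace $x^2y$ by $x^2y-\alpha^{-1}\ldots$; more cleanly, first replace $x^3$ by $g=x^3-\alpha^{-1}y$. Then $g$ has order $\ge4$, and its restriction $h$ is a nonzero section, since $\{1,\dots,y^3\}$ is a basis of $V_3$ ($N_3=10$). Next, $x^2y$ has order $5$, not $6$: its exponents give $2+3=5$. So I mislisted it: the $j=1$ orders are $3,4,5$ from $y,xy,x^2y$, and there is no order-$6$ coincidence. The only clash is at order $3$.

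With that correction, the nine monomials other than $x^3$ have distinct orders $0,1,2,3,4,5,6,7,9$, and these belong to the vanishing sequence. The tenth element is $m=\ord_P(g)$ for a suitable $g$. More precisely, let $m$ be the maximum of $\ord_P(x^3-\alpha^{-1}y+\text{lower combination})$ over all adjustments by the other nine monomials. By the pivot description in Remark~\ref{rem:recipe}, the vanishing sequence is $\{0,\dots,7,9\}\cup\{m\}$ with $m\notin\{0,\dots,7,9\}$. Since $g$ has order $\ge4$, this forces $m=8$ or $m\ge10$.

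The weight is then
\[
w_3(P)=\Bigl(\sum_{i=0}^{7}i+9+m\Bigr)-45=37+m-45=m-8 .
\]
This is $0$ when $m=8$, and at least $2$ when $m\ge10$. In particular an ordinary flex never has $w_3(P)=1$.

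The only delicate point is justifying that the vanishing sequence is exactly these nine values together with one new value $m$. This follows because a basis with distinct orders can be obtained by Gaussian elimination on the single extra element, and elimination never lowers an existing order.
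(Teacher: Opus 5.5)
Your argument is correct and is essentially the paper's proof: the orders $i+3j$ are pairwise distinct for $V_1$ and $V_2$, and the nine cubic monomials other than $x^3$ realise $\{0,\dots,7,9\}$. The tenth element $m$ therefore lies outside this set, and $w_3(P)=m-8$. In a final write-up, delete the retracted claim of an order-$6$ coincidence and the abandoned sentence about $x^2y$; the description of $m$ as a maximal order over adjustments of $x^3$ and the bound $\ord_P(x^3-\alpha^{-1}y)\ge4$ are harmless but unnecessary, since $m\notin\{0,\dots,7,9\}$ alone gives $m=8$ or $m\ge10$.
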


\begin{proof}
For $n=2$ the orders $i+3j$ ($i+j\le 2$) are $0,1,2,3,4,6$, pairwise distinct. For $n=3$ the monomials $1,x,x^2,x^3,y,xy,x^2y,y^2,xy^2,y^3$ have orders $0,1,2,3,3,4,5,6,7,9$, so $\{0,\ldots,7,9\}$ lies in the vanishing sequence and the tenth element $m$ is not in this set. The weight is $(28+9+m)-45=m-8$ if $m\ge10$, and $0$ if $m=8$.
\end{proof}

\begin{lemma}\label{lem:nonflex}
If $P$ is not a flex ($k=2$), the vanishing sequence of $V_2$ at $P$ is $\{0,1,2,3,4,m\}$ with $m=I_P(C,D_P)\ge 5$. $P$ is sextactic if and only if $m\ge 6$, and then $w_2(P)=m-5$, i.e.\ $P$ is $w_2(P)$-sextactic.
\end{lemma}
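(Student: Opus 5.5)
We follow the proofs of Lemmas~\ref{lem:hyperflex} and~\ref{lem:flex}. Put $P$ at the origin of the chart $Z=1$ with $T_P=\{y=0\}$. Since $P$ is not a flex we have $k=2$ in \eqref{eq:monorders}, so the branch is $y=\alpha t^2+O(t^3)$ with $\alpha\neq0$ and $t=x$. The six affine monomials $1,x,y,x^2,xy,y^2$ span $V_2=S_2$. By \eqref{eq:monorders} their orders are $0,1,2,2,3,4$, so $0,1,2,3,4$ all lie in the vanishing sequence of $V_2$. Since $N_2=6$, exactly one more value $m\notin\{0,\dots,4\}$ occurs, and hence $m\ge5$.

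To identify $m$ we use Proposition~\ref{prop:contact}: the vanishing sequence of $V_2$ is the set of intersection multiplicities $I_P(C,D)$ over conics $D$. This is a set of six distinct integers, and five of them are $0,\dots,4$. Hence $m$ is the largest value of $I_P(C,D)$, and $m\ge5$. By Definition~\ref{def:types}(b) the osculating conic $D_P$ satisfies $I_P(C,D_P)\ge5$. So $D_P$ attains a value outside $\{0,\dots,4\}$, and that value must be $m$. Thus $m=I_P(C,D_P)$.

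To make this self-contained, one can also argue directly. The conditions $\ord_P(G|_C)\ge5$ impose five linear conditions (the coefficients of $t^0,\dots,t^4$) on the six-dimensional space $S_2$. These conditions are independent, because orders $0,\dots,4$ are all attained. So there is a unique conic up to scalar with contact $\ge5$, namely $D_P$, and its order is $m$. This conic does not contain $C$, since a conic cannot contain a quartic.

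It remains to read off the weight:
\[ w_2(P)=(0+1+2+3+4+m)-(0+1+2+3+4+5)=m-5. \]
Hence $w_2(P)>0$ if and only if $m\ge6$, i.e.\ if and only if $I_P(C,D_P)\ge6$, which is the definition of sextactic. When this holds, $m=5+w_2(P)$, so $P$ is $w_2(P)$-sextactic in the sense of Definition~\ref{def:types}(b).

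The only point needing care is that the pivot argument gives each value $0,\dots,4$ exactly once. The two monomials $y$ and $x^2$ both have order $2$, but a combination of them with order greater than $2$ produces no new value below $5$: any such new value would have to be $3$ or $4$, and these already appear in the sequence. Since the sequence consists of distinct values, this causes no problem.
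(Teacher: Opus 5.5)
Your proposal is correct and follows the paper's argument: the paper's one-line proof just lists the monomial orders $0,1,2,2,3,4$, and you fill in the implicit steps. These are that $0,\dots,4$ lie in the vanishing sequence, that the sixth value $m\ge5$ is attained by $D_P$, and that $w_2(P)=m-5$.
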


\begin{proof}
The monomials $1,x,x^2,y,xy,y^2$ have orders $0,1,2,2,3,4$.
\end{proof}

Together with Lemma~\ref{lem:hyperflex} this shows that every flex is a $2$-Weierstrass point and that $\operatorname{supp}\W_2(C)$ is the set of flexes and
sextactic points.

\subsection{Contact cubics and the classification of Kamel--Elshareef}

The values of $m$ in Lemma~\ref{lem:flex}, and the analogous values at sextactic and dectactic points, were determined in \cite{KamelElshareef}. The upper bounds come from B\'ezout's theorem -- the only non-trivial exclusions rest on one geometric fact, which we isolate.

\begin{lemma}\label{lem:lineq}
Let $E\subset\PP^2$ be an integral cubic and let $P$ be a smooth point of $E$. Let $G$ and $G'$ be curves of degree $e$ not containing $E$ whose intersection divisors on $E$ are $E\cdot G=(3e-1)P+Q$ and $E\cdot G'=3e\,P$. Then $Q=P$.
\end{lemma}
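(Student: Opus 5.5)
The plan is to compare the two intersection divisors as elements of the Picard group of the integral cubic $E$. Both $G$ and $G'$ have degree $e$, so the restrictions $E\cdot G$ and $E\cdot G'$ are effective Cartier divisors on $E$ cut out by sections of the same line bundle $\OO_E(e)$. Hence they are linearly equivalent on $E$:
\[ (3e-1)P+Q\ \sim\ 3e\,P, \qquad\text{so}\qquad Q\sim P \]
as Cartier divisors on $E$. Here $P$ is a smooth point, and $Q$ must also be a smooth point of $E$; I address this below. We are therefore working in the Picard group of a curve of arithmetic genus one, restricted to its smooth locus.

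The key step is then: on an integral plane cubic $E$ (arithmetic genus $1$), two smooth points $P,Q$ with $Q\sim P$ must coincide. Suppose $Q\neq P$. I would argue via Riemann--Roch on the integral Gorenstein curve $E$, whose dualizing sheaf is $\omega_E\simeq\OO_E$. For the degree-one line bundle $\OO_E(P)$,
\[ h^0(\OO_E(P))=1+h^1(\OO_E(P))=1+h^0(\OO_E(-P))=1. \]
So $h^0(\OO_E(P))=1$. If $Q\sim P$ with $Q\ne P$, there would be a nonconstant rational function with a single simple pole at $P$, giving $h^0\ge2$, a contradiction. An equivalent and more elementary route: such a function would define a degree-one map $E\to\PP^1$, making $E$ birational to $\PP^1$ and isomorphic to it near the smooth points. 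One can then use the group law on $E^{\mathrm{sm}}$ (a chord-tangent construction valid for nodal and cuspidal cubics too), in which $Q\sim P$ means $Q-P=0$.

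The main obstacle is justifying that $Q$ is a smooth point of $E$, so that $Q$ is a Cartier divisor and the equivalence makes sense. At a singular point $S$ of $E$, any curve through $S$ has local intersection multiplicity $I_S(E,G)\ge 2$. The divisor $E\cdot G=(3e-1)P+Q$ has only multiplicity one available at $Q\neq P$, so $Q$ cannot be the singular point. This settles the point, and the Riemann--Roch argument on $E$ (or the group law on its smooth locus) then gives $Q=P$.
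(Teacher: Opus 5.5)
Your argument is correct. Its first half coincides with the paper's proof: $Q$ must be a smooth point, because every curve meets $E$ with multiplicity at least $2$ at a singular point, and $Q\sim P$, because both divisors are cut out by sections of $\OO_E(e)$.

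The difference is in the key step. You rule out a rational function $\varphi$ with $\divisor\varphi=Q-P$ cohomologically: Riemann--Roch and Serre duality on the Gorenstein curve $E$, with $\omega_E\simeq\OO_E$, give $h^0(\OO_E(P))=1$, whereas $1$ and $\varphi$ would be two independent sections. The paper argues geometrically instead. Since $\divisor\varphi$ is supported on smooth points, $\varphi$ is a unit at the singular points of $E$. So it defines an everywhere regular, finite morphism $E\to\PP^1$ of degree one. A finite birational morphism onto the normal curve $\PP^1$ is an isomorphism, which contradicts $p_a(E)=1$.

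What each route buys:
\begin{itemize}
\item The paper's route needs no duality theory on singular curves, and it works verbatim for any integral projective curve of positive arithmetic genus, with no Gorenstein hypothesis.
\item Your route is the standard short computation once Riemann--Roch for Gorenstein curves and the adjunction identification $\omega_E\simeq\OO_E$ are granted.
\end{itemize}

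Your sketched ``more elementary route'' is incomplete as stated. Being birational to $\PP^1$ is no contradiction for nodal or cuspidal cubics, since these are rational. What is missing is exactly the paper's observation: $\varphi$ is regular and invertible at the singular point, so the birational map is a finite morphism and hence an isomorphism. The fallback to the group law on $E^{\mathrm{sm}}$ is valid, but it amounts to the injectivity of $Q\mapsto[Q-P]$ from the smooth locus into the degree-zero Picard group. That is essentially the statement being proved. Your main Riemann--Roch argument needs none of this and is complete on its own.
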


\begin{proof}
If $Q\neq P$, then $Q$ is a smooth point of $E$ (at a singular point every curve meets $E$ with multiplicity at least $2$). Both divisors lie in $|\OO_E(e)|$, so $Q\sim P$. A rational function $\varphi$ on $E$ with $\divisor\varphi=Q-P$ is a unit at every singular point of $E$, hence defines a finite birational morphism $E\to\PP^1$, which is an isomorphism since $\PP^1$ is normal. This contradicts $p_a(E)=1$.
\end{proof}

We also use the elementary ultrametric inequality
$I_P(A,B)\ge\min\{I_P(A,C),I_P(C,B)\}$ for curves $A,B$ through a smooth point $P$ of $C$, applied to branches \cite[Lemma~5]{KamelElshareef}, and the fact that a cubic with a cusp at $P$ meets a smooth branch through $P$ with multiplicity at most $3$.

\begin{proposition}[{\cite[Lemmas~9, 11, 13]{KamelElshareef}}]\label{prop:no11}
Let $P$ be a point of a smooth quartic $C$.
\begin{enumerate}[label=\rm(\alph*)]
\item If $P$ is an ordinary flex, no cubic $E$ has $I_P(C,E)=11$.
\item If $P$ is $1$-sextactic, no cubic $E$ has $I_P(C,E)=11$.
\item If $P$ is dectactic and some cubic $R$ has $I_P(C,R)=9$, then no cubic $E$ has
  $I_P(C,E)=12$.
\end{enumerate}
\end{proposition}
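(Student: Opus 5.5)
The plan is to reduce all three parts to Lemma~\ref{lem:lineq}, applied on the putative contact cubic $E$. The curve of degree $4$ playing the role of $G$ (or $G'$) will be $C$ itself, since $C$ does not contain the integral cubic $E$. In each case three steps are needed: show that $E$ is integral, show that $P$ is a smooth point of $E$, and exhibit two curves of the same degree $e$ whose intersection divisors with $E$ are $(3e-1)P+Q$ with $Q\neq P$ and $3eP$. Lemma~\ref{lem:lineq} then gives $Q=P$, a contradiction.

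\emph{Part (a).} Suppose $I_P(C,E)=11$.
\begin{itemize}
\item \emph{Integrality.} At an ordinary flex a line has contact at most $3$. A conic has contact at most $6$, because the vanishing sequence of $V_2$ is $\{0,1,2,3,4,6\}$ by Lemma~\ref{lem:flex}. Hence a reducible cubic has contact at most $9$, and $E$ is integral.
\item \emph{Smoothness at $P$.} If $P$ were a cusp of $E$, the contact would be at most $3$. If $P$ were a node with smooth branches $B_1,B_2$, then $I_P(E,T_P)=I(B_1,T_P)+I(B_2,T_P)\le3$, so some branch, say $B_2$, is transverse to $T_P$ and contributes $1$. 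Then $I(C,B_1)\ge10$, and the ultrametric inequality gives $I(B_1,T_P)\ge3$, so $I_P(E,T_P)\ge4$, contradicting B\'ezout.
\item \emph{Applying the lemma.} By the ultrametric inequality $I_P(E,T_P)\ge\min(11,3)=3$, so $E\cdot T_P=3P$. Take $e=4$, $G=C$ and $G'=T_P^4$. Then $E\cdot C=11P+Q$, where $Q\ne P$ precisely because the contact is exactly $11$, and $E\cdot G'=12P$.
\end{itemize}

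\emph{Part (b).} Suppose again $I_P(C,E)=11$, and let $D=D_P$ be the osculating conic, so $I_P(C,D)=6$.
\begin{itemize}
\item \emph{Integrality.} At a non-flex a line has contact at most $2$ and a conic at most $6$, so a reducible cubic has contact at most $8$.
\item \emph{Smoothness at $P$.} A cusp is excluded as before. If $P$ were a node, some branch $B$ would have $I(C,B)\ge10$, so $I(B,D)\ge6$ by the ultrametric inequality. The other branch meets $D$ at least once, so $I_P(E,D)\ge7$. This contradicts B\'ezout for the integral cubic $E$ and the irreducible conic $D$.
\item \emph{Applying the lemma.} Now $I_P(E,D)\ge\min(11,6)=6$, so $E\cdot D=6P$. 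Take $e=4$, $G=C$ and $G'=D^2$.
\end{itemize}

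\emph{Part (c).} Suppose $I_P(C,E)=12$, so $E\ne R$.
\begin{itemize}
\item \emph{Integrality.} $E$ is integral by the remark after Definition~\ref{def:types}.
\item \emph{Smoothness at $P$.} A cusp is excluded as before. If $P$ were a node, the branch $B$ carrying contact at least $11$ with $C$ would satisfy $I(B,R)\ge9$ by the ultrametric inequality. The other branch adds at least $1$, so $I_P(E,R)\ge10>9$, contradicting B\'ezout.
\item \emph{Choosing $G$ and $G'$.} The ultrametric inequality gives $I_P(E,R)\ge\min(12,9)=9$, hence $E\cdot R=9P$. Next, $I_P(E,T_P)=2$: it is at least $2$, and if it were $3$, then $I_P(C,T_P)\ge\min(12,3)=3$, making $P$ a flex, which is excluded. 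So $E\cdot T_P=2P+Q$ with $Q\ne P$. Take $e=4$, $G=R\cdot T_P$ and $G'=C$. Then $E\cdot G=11P+Q$ and $E\cdot G'=12P$, and $G$ does not contain $E$.
\end{itemize}

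The main obstacle is the smoothness of $E$ at $P$, namely excluding a node. In each part I handle it by finding an auxiliary curve (the line $T_P$, the conic $D_P$, or the cubic $R$) to which one branch of $E$ is highly tangent. The other branch then pushes the total intersection with that curve past the B\'ezout bound.
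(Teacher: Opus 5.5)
Your proposal is correct and follows the paper's proof essentially step for step: integrality of $E$, exclusion of a cusp or node at $P$, and then Lemma~\ref{lem:lineq} with $e=4$ applied to the pairs $(C,T_P^4)$, $(C,D_P^2)$ and $(R\cdot T_P,\,C)$. The differences are cosmetic. In (c) you first show $I_P(E,T_P)=2$, so that $Q\neq P$, and let the lemma produce the contradiction; the paper instead applies the lemma to get $Q'=P$ and then contradicts non-flexness. You also apply the ultrametric inequality directly to a possibly nodal $R$, which is legitimate because both the branch of $E$ and $C$ are smooth at $P$, whereas the paper splits $R$ into its branches.
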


\begin{proof}
(a) Let $I_P(C,E)=11$ and $T=T_P$. Lines meet $C$ at $P$ with multiplicity $\le3$ and conics with multiplicity $\le 6$ (Lemma~\ref{lem:flex}), so $E$ is irreducible. If $P$ were a node of $E$, one branch would be transversal to $C$ and the other one,
$\Gamma$, would have $I_P(\Gamma,C)=10$; then $I_P(\Gamma,T)\ge\min\{10,3\}=3$ and $I_P(E,T)\ge4$, so $T\subset E$, which is absurd. A cusp gives multiplicity $\le3$. Hence $E$ is smooth at $P$, $I_P(E,T)\ge\min\{11,3\}=3$, and $E\cdot T=3P$ by
B\'ezout. Now $E\cdot C=11P+Q$ and $E\cdot T^4=12P$, and Lemma~\ref{lem:lineq} with $e=4$ gives $Q=P$, i.e.\ $I_P(C,E)=12$, a contradiction.

(b) This is the same argument with the osculating conic $D_P$ in place of $T$: $I_P(E,D_P)\ge\min\{11,6\}=6$, so $E\cdot D_P=6P$ and $E\cdot D_P^2=12P$.

(c) Suppose that $I_P(C,E)=12$, and let $R$ be a cubic with $I_P(C,R)=9$. Since $P$ is neither a flex nor a sextactic point, any line has contact at most $2$ with $C$ at $P$, and any conic has contact at most $5$. Consequently, both $E$ and $R$ are irreducible, and $E\neq R$.

We claim that $E$ is smooth at $P$. A cusp is impossible, since a cuspidal cubic meets a smooth branch through its cusp with multiplicity at most $3$. Suppose that $E$ has a node at $P$, with branches $\Gamma_1$ and $\Gamma_2$. Since the two branches have distinct tangent lines and $C$ is smooth, we may assume that \[I_P(C,\Gamma_1)=1,\qquad I_P(C,\Gamma_2)=11.
\]
We have \(I_P(\Gamma_2,R)\ge 9\). If $R$ is smooth at $P$, this is the ultrametric
inequality \(I_P(\Gamma_2,R)\ge\min\{I_P(\Gamma_2,C),I_P(C,R)\}=9\). $R$ cannot have a cusp at $P$ (contact would be at most $3$). If $R$ has a node at $P$ with branches $R_1,R_2$, where $I_P(C,R_1)=1$ and $I_P(C,R_2)=8$, then
\(I_P(\Gamma_2,R)\ge I_P(\Gamma_2,R_1)+\min\{I_P(\Gamma_2,C),I_P(C,R_2)\}\ge 1+8=9\). It follows that \( I_P(E,R)=I_P(\Gamma_1,R)+I_P(\Gamma_2,R) \geq 1+9=10, \) contrary to B\'ezout's theorem, since $E$ and $R$ are distinct irreducible cubics. Thus $E$ is smooth at $P$.

Again by the ultrametric inequality, applied branchwise if $R$ is nodal at $P$, \( I_P(E,R)\geq 9 \) and hence $E\cdot R=9P$ by B\'ezout. Let $T=T_P$. Since $P$ is not a flex, $I_P(C,T)=2$, and therefore $I_P(E,T)\geq2$. We may write \( E\cdot T=2P+Q'. \)
Consequently, \[ E\cdot(RT)=11P+Q', \qquad E\cdot C=12P.\] Lemma~\ref{lem:lineq}, applied with $e=4$, gives $Q'=P$, so $I_P(E,T)=3$. But then \(I_P(C,T)\geq \min\{I_P(C,E),I_P(E,T)\}=3, \) contradicting the assumption that $P$ is not a flex.
\end{proof}

Combining these exclusions with Lemmas~\ref{lem:hyperflex}--\ref{lem:nonflex},
B\'ezout's theorem and explicit examples, Kamel and Elshareef obtained the following classification \cite[Propositions~10, 12, 14]{KamelElshareef}. Here we state it in terms of vanishing sequences ($a_i=n_{i+1}-1$).

\begin{theorem}[Kamel--Elshareef]\label{thm:KE}
The vanishing sequence of $V_3$ at a point $P$ of a smooth plane quartic is one of the following, and each of them occurs.
\[ \renewcommand{\arraystretch}{1.1}
\begin{array}{llc}
\toprule
\textup{type of }P & \textup{vanishing sequence of }V_3 & w_3(P)\\
\midrule
\textup{hyperflex} & 0,1,2,3,4,5,6,8,9,12 & 5\\
\textup{ordinary flex} & 0,\ldots,9\ \mid\ 0,\ldots,7,9,10\ \mid\ 0,\ldots,7,9,12 & 0\mid2\mid4\\
\textup{$1$-sextactic} & 0,\ldots,9\ \mid\ 0,\ldots,8,10\ \mid\ 0,\ldots,8,12 & 0\mid1\mid3\\
\textup{$2$-sextactic} & 0,\ldots,9 & 0\\
\textup{$3$-sextactic} & 0,\ldots,6,8,9,10 & 3\\
\textup{$t$-dectactic, no cubic with contact $9$} & 0,\ldots,8,9+t & t\\
\textup{$t$-dectactic, a cubic with contact $9$} & 0,\ldots,7,9,9+t\quad(t\le2) & t+1\\
\textup{other} & 0,\ldots,9 & 0\\
\bottomrule
\end{array} \]
\end{theorem}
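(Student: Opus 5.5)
The proof splits into two parts. First we show that every vanishing sequence of $V_3$ lies in the list; then we exhibit an example of each type. Fix $P$ and normalise as in the subsection on monomial orders, so that $P$ is the origin, $T_P=\{y=0\}$ and $k=I_P(C,T_P)$. The hyperflex row is Lemma~\ref{lem:hyperflex}. For an ordinary flex, Lemma~\ref{lem:flex} gives the sequence $\{0,\dots,7,9\}\cup\{m\}$ with $m=8$ or $m\ge10$. Proposition~\ref{prop:no11}(a) rules out $m=11$. For the upper bound, any cubic $E$ with $I_P(C,E)\ge 13$ meets $C$ in $13>12$ points, so $E$ would contain $C$, which is impossible. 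Hence $m\in\{8,10,12\}$.

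For a non-flex ($k=2$) we use Lemma~\ref{lem:nonflex}. The monomials $1,x,x^2,x^3,y,xy,x^2y,y^2,xy^2,y^3$ have orders $0,1,2,3,2,3,4,4,5,6$. We then adjust the basis using the osculating conic $D_P$, whose order is $m_2=5+s$. The forms $1,x,x^2,x^3,y-\ldots,\ x\cdot(\text{conic}),\ D_P$ and their multiples by $x$ and $y$ realise the orders $0,\dots,4$, then $m_2$, $m_2+1$ from $xD_P$, and $m_2+2$ or $m_2+1$ from $yD_P$ (recall $\ord y=2$). This shows that $\{0,\dots,7\}$, or more, is contained in the sequence, and the remaining elements are controlled by Bézout.

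We treat the non-flex cases one at a time.
\begin{enumerate}[label=(\roman*)]
\item If $P$ is $1$-sextactic, the orders $0,\dots,6$ together with $xD_P$ and $yD_P$ (orders $7,8$) give $\{0,\dots,8\}$ plus one further value. That value is $9$, $10$ or $12$: the value $11$ is excluded by Proposition~\ref{prop:no11}(b), and values $\ge 13$ by Bézout.
\item If $P$ is $2$-sextactic, $D_P$ has order $7$ and $xD_P$ order $8$. We must show that $9$ is attained and that no order $\ge 10$ occurs. The reducible cubic $yD_P$ has order $9$. A cubic of order $\ge10$ is irreducible unless it is $D_PT_P$, which has order $9$. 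If it were irreducible, the ultrametric inequality would give $I_P(E,D_P)\ge 7>6$, contradicting Bézout. So the sequence is $0,\dots,9$.
\item If $P$ is $3$-sextactic, $D_P$ has order $8$, $xD_P$ order $9$ and $yD_P=T_PD_P$ order $10$. The order $7$ is missing, because a cubic of order $7$ would, combined with $D_P$, contradict Bézout in the same way as in (ii). This gives $0,\dots,6,8,9,10$.
\item If $P$ is neither a flex nor sextactic, the sequence contains $0,\dots,7$. Let $\rho$ be the maximal contact order of a cubic other than $\Sigma_P$. If no cubic has contact $9$, the sequence is $0,\dots,8,9+t$. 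If some cubic has contact $9$, then $8$ is missing by a uniqueness argument: two cubics of order $8$ and $9$ would combine to give a cubic of order $\ge10$ that is not $\Sigma_P$, or a dimension count forces this. In that case Proposition~\ref{prop:no11}(c) gives $t\le2$.
\end{enumerate}

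The weights in the table then follow by summing $a_i-i$ over each sequence.

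\emph{Main obstacle.} The hardest part is the dectactic case with a cubic of contact $9$, specifically showing that exactly one order below $9$ is skipped and that it is $8$. Here I would argue as in the proof of Proposition~\ref{prop:no11}(c), using Lemma~\ref{lem:lineq} on $R$ with the divisors $R\cdot(\Sigma_P)=9P$ and $R\cdot(T_P\cdot\text{conic})$.

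For the existence part, I would follow \cite{KamelElshareef}. For each row I would write an explicit quartic of the shape $F=y\,e(x,y)+\lambda\ell^4$ or a perturbation of it, arranged so that a prescribed cubic or conic has the required contact at the origin. I would then verify the vanishing sequence with the \sing{vanseq} routine of Remark~\ref{rem:recipe}, in the same way as the examples in the later sections (for instance the Klein quartic for the flex of weight $2$ and the Picard quartic for weight $4$).
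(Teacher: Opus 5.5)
Your proposal is correct and follows the same route as the paper: the monomial-order lemmas, B\'ezout's theorem and Proposition~\ref{prop:no11}(a)--(c) produce the list, and existence is deferred to the examples of \cite{KamelElshareef}, as the paper also does (it checks them in \sing{general.sing}). Two points need cleaning up.

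First, in the $2$- and $3$-sextactic cases you already exhibit ten distinct orders, coming from $1,x,x^2,x^3,y^2,xy^2,y^3,D_P,xD_P,yD_P$. So the extra B\'ezout arguments there are superfluous.

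Second, in (iv) your ``combining'' argument is invalid, since a combination of sections of orders $8$ and $9$ has order $8$, and Lemma~\ref{lem:lineq} is not needed at that step. What you actually need is the uniqueness of $\Sigma_P$, which forces the ninth value to be at most $9$. Counting alone then gives $0,\dots,8,9+t$ or $0,\dots,7,9,9+t$. The only genuine obstacle is $t\le 2$, and that is exactly Proposition~\ref{prop:no11}(c).
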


In particular, $m\in\{8,10,12\}$ in Lemma~\ref{lem:flex}. All realisability examples of
\cite[Section~3]{KamelElshareef} (for sample values of their parameters) are checked in \sing{general.sing}, including their smoothness. For the extreme case at a flex there is a simple geometric characterisation.

\begin{proposition}
\label{prop:totalcontact}
Let $P$ be an ordinary flex of $C=\V(F)$ with tangent line $T=\V(t)$. The following are equivalent:
\begin{enumerate}[label=\rm(\roman*)]
\item $w_3(P)=4$;
\item there is a cubic $E$ with $E\cdot C=12P$;
\item $F=\lambda\,t^4+\ell\cdot e$ with $\lambda\in\CC^*$, a linear form $\ell$, and a cubic $E=\V(e)$ with $I_P(E,T)=3$.
\end{enumerate}
The cubic in (ii) and (iii) is then irreducible, smooth at $P$, and has an inflection point at $P$ with tangent $T$, i.e.,\ $C$ lies in the pencil spanned by the quadruple tangent line $T^4$ and the reducible quartic $\ell\cdot E$.
\end{proposition}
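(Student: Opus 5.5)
The plan is to prove (i)$\Leftrightarrow$(ii) from the classification, and (ii)$\Leftrightarrow$(iii) from the pencil structure and Lemma~\ref{lem:lineq}.

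\textbf{(i)$\Leftrightarrow$(ii).} By Lemma~\ref{lem:flex} and Theorem~\ref{thm:KE}, at an ordinary flex we have $w_3(P)=4$ exactly when the vanishing sequence of $V_3$ contains $12$. That is, some cubic $E$ not containing $C$ has $I_P(C,E)=12$. By B\'ezout $E\cdot C$ has degree $12$, so this is the same as $E\cdot C=12P$. Conversely, (ii) puts $12$ into the sequence, and Lemma~\ref{lem:flex} then forces $m=12$ and $w_3(P)=4$.

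\textbf{Properties of $E$.} Let $E$ satisfy $E\cdot C=12P$.
\begin{itemize}
\item $E$ is irreducible. Lines have contact $\le3$ and conics contact $\le6$ with $C$ at $P$, so a reducible cubic has contact $\le 9$.
\item $E$ is smooth at $P$. A cusp gives contact $\le3$. A node gives, as in Proposition~\ref{prop:no11}(a), a branch $\Gamma$ with $I_P(\Gamma,C)=11$, hence $I_P(\Gamma,T)\ge3$ and $I_P(E,T)\ge4$, which is absurd.
\item $P$ is an inflection point of $E$ with tangent $T$. The ultrametric inequality gives $I_P(E,T)\ge\min\{12,3\}=3$, and B\'ezout gives $E\cdot T=3P$.
\end{itemize}

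\textbf{(ii)$\Rightarrow$(iii).} Both $C$ and $T^4$ are quartics, and on $E$ we have $E\cdot C=12P=E\cdot T^4$. I would use this coincidence of divisors to put $F$ in a pencil with $t^4$ modulo $e$. The tool is the exact sequence
\[0\to S_1\xrightarrow{\cdot e}S_4\to H^0(E,\OO_E(4))\to0,\]
which holds because plane curves are projectively normal. Since $F|_E$ and $t^4|_E$ are sections of $\OO_E(4)$ with the same divisor $12P$ on the integral curve $E$, their ratio is a regular function without zeros on the projective curve $E$. Hence the ratio is a nonzero constant $\lambda$. Exactness then gives $F-\lambda t^4=\ell e$ for some linear form $\ell$.

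\textbf{(iii)$\Rightarrow$(ii).} Suppose $F=\lambda t^4+\ell e$ with $I_P(E,T)=3$. Then $E\not\subset C$ because $C$ is smooth and irreducible. On $E$ we have $F|_E=\lambda t^4|_E$, and B\'ezout gives $E\cdot T=3P$. Therefore $E\cdot C=E\cdot T^4=12P$, which is (ii).

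\textbf{Main obstacle.} The delicate points are:
\begin{itemize}
\item In (ii)$\Rightarrow$(iii), the argument needs $E$ integral, so that a nowhere vanishing regular function on $E$ is constant. Irreducibility was proved above.
\item In (iii)$\Rightarrow$(ii), $E$ could a priori be reducible. This is harmless for (ii), since only the intersection divisor is used. The irreducibility claimed in the final sentence then follows from (ii) by the properties of $E$ established above.
\end{itemize}
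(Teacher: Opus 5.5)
Your proof is correct and follows essentially the same route as the paper. You get (i)$\Leftrightarrow$(ii) from the vanishing sequence at an ordinary flex; Lemma~\ref{lem:flex} together with B\'ezout already suffices here, without Theorem~\ref{thm:KE}. You get (ii)$\Rightarrow$(iii) by showing that $F/t^4$ is a nowhere-vanishing regular function on the integral cubic $E$, hence a constant, after establishing irreducibility, smoothness at $P$ and $E\cdot T=3P$ as in Proposition~\ref{prop:no11}(a), and (iii)$\Rightarrow$(ii) from $E\cdot C=4\,(E\cdot T)=12P$.
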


\begin{proof}
(i)$\Longleftrightarrow$(ii) is Theorem~\ref{thm:KE} (the element $12$ of the sequence is attained exactly by a cubic with $E\cdot C=12P$). \\
(ii)$\Longrightarrow$(iii): as in the proof of Proposition~\ref{prop:no11}(a), $E$ is irreducible, smooth at $P$, and $E\cdot T=3P$. On the integral curve $E$ the forms $F$ and $t^4$ cut out the same
Cartier divisor $12P$, so $F/t^4$ is a nowhere vanishing regular function on $E$, i.e.\ a constant $\lambda$; thus $e$ divides $F-\lambda t^4$. \\
(iii)$\Longrightarrow$(ii): $E\cdot C=E\cdot(\lambda t^4)=4\,(E\cdot T)=12P$.
\end{proof}

\begin{remark}
In the same way a $1$-sextactic point $P$ has $w_3(P)=3$ if and only if $F=\lambda\,d^2+\ell\cdot e$, where $d=0$ is the osculating conic and $E\cdot D_P=6P$.
The examples of \cite{KamelElshareef} for the cases $m=12$ are of exactly these shapes: $F=(YZ^2-X^3)(aX+bY+cZ)+\gamma Y^4$ and $F=(YZ^2-X^2Z-Y^3)(aX+bY+cZ)+d(YZ-X^2)^2$. Homologies of order three produce the first shape automatically, see Theorem~\ref{thm:homologies}(b).
\end{remark}

\subsection{Homologies}

A \emph{homology} of $\PP^2$ is a projective transformation of finite order $N\ge2$ fixing a line $\ell$ (the \emph{axis}) pointwise and one further point $O\notin\ell$ (the \emph{centre}). In suitable coordinates $(u:v:w)$ it is $\sigma_N:(u:v:w)\mapsto(\zeta u:v:w)$, $\zeta$ a primitive $N$-th root of unity, with $\ell=\{u=0\}$ and $O=(1:0:0)$. Every involution of $\PP^2$ is a homology (a
\emph{harmonic homology}).

\begin{proposition}\label{prop:homnormal}
Let a homology $\sigma$ of order $N$ preserve the smooth quartic $C=\V(F)$. Then $N\in\{2,3,4\}$ and, in coordinates as above,
\[ F=\begin{cases}
  F_4(v,w)+u^2F_2(v,w)+c\,u^4,\ c\neq0, & N=2,\\
  F_4(v,w)+u^3F_1(v,w),\ F_1\neq0, & N=3,\\
  F_4(v,w)+c\,u^4,\ c\neq0, & N=4,
  \end{cases} \]
with $F_i$ binary forms of degree $i$ and $F_4$ without multiple factors. Thus $C$ meets the axis in four distinct points $P$, the tangent line $T_P$ is the line $PO$, and $\ell$ is transversal to $C$ at $P$. The centre lies on $C$ if and only if $N=3$.
\end{proposition}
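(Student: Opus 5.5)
The plan is to expand $F$ in powers of $u$ and read off which monomials are compatible with $\sigma=\sigma_N$. Write $F=\sum_{i=0}^4u^iF_{4-i}(v,w)$. Since $\sigma$ preserves $C$, we have $F\circ\sigma=\mu F$ for some scalar $\mu$. Then $F\circ\sigma=\sum_i\zeta^iu^iF_{4-i}$, so the terms with $F_{4-i}\neq0$ all have the same value of $\zeta^i$, i.e.\ the same residue $i\bmod N$.

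The first step uses smoothness to pin down which terms survive.
\begin{itemize}[label=--]
\item If $F_4=0$, then $u\mid F$, which is impossible for a smooth (hence irreducible) quartic. So $F_4\neq0$, $\mu=1$, and only indices $i\equiv0\pmod N$ can occur.
\item If $N\ge5$, this leaves $F=F_4(v,w)$. That is a cone over four points, singular at $O$, so $N\le4$.
\item For $N=4$ we get $F=F_4+cu^4$.
\item For $N=3$ we get $F=F_4+u^3F_1$.
\item For $N=2$ we get $F=F_4+u^2F_2+cu^4$.
\end{itemize}
The nonvanishing conditions also come from smoothness at $O=(1:0:0)$:
\begin{itemize}[label=--]
\item For $N=2$ and $N=4$: if $c=0$, every monomial of $F$ has $u$-degree at most $2$, so all partial derivatives vanish at $O$ and $O$ is a singular point of $C$. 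Hence $c\neq0$.
\item For $N=3$: if $F_1=0$, then $F=F_4(v,w)$ is again singular at $O$. Hence $F_1\neq0$.
\end{itemize}
The same case list shows $F(O)=c$ in cases $N=2,4$ and $F(O)=0$ for $N=3$. So the centre lies on $C$ exactly when $N=3$.

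Next I will show that $F_4$ has no multiple factors. On the axis $u=0$, the restriction of $F$ is $F_4(v,w)$. Suppose $F_4$ had a repeated root, giving a point $P=(0:v_0:w_0)$. In each of the three normal forms $\partial F/\partial u$ vanishes at $u=0$: it is $u\cdot(\dots)$ for $N=2,4$ and $3u^2F_1$ for $N=3$. The $v$- and $w$-derivatives at $P$ are those of $F_4$, which vanish at a double root. So $P$ would be singular, a contradiction. Hence $C\cap\ell$ consists of four distinct points and $\ell$ is transversal at each of them.

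Finally, the tangent line at $P\in C\cap\ell$ is $\nabla F(P)\cdot X=0$. From the previous step, $\nabla F(P)=(0,\partial_vF_4,\partial_wF_4)(P)$. This line has no $u$-term, so it contains $O=(1:0:0)$, and $T_P=PO$. The argument is entirely mechanical; the only care needed is the $\mu\neq1$ bookkeeping at the start, which is settled by $F_4\neq0$.
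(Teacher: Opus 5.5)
Your proof is correct and follows essentially the same route as the paper: expand $F$ in powers of $u$, use $F\circ\sigma=\mu F$ to confine the exponents to one residue class mod $N$, rule out $u\mid F$ and the cone $F=F_4(v,w)$, get $c\neq0$ from smoothness at $O$, and use $\partial_uF=0$ on the axis to show that $F_4$ has simple roots and that each $T_P$ passes through $O$. You spell out a few steps the paper leaves implicit, namely the case $F_1\neq0$ for $N=3$ and the vanishing of all partial derivatives at $O$ when $c=0$.
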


\begin{proof}
Write $F=\sum_{a=0}^4u^aF_{4-a}(v,w)$. Since $F\circ\sigma=\zeta^{a}F$ for every $a$ with $F_{4-a}\neq0$, these $a$ are congruent modulo $N$, and $a=0$ occurs since $u\nmid F$. If only $a=0$ occurs, $C$ is a cone. This leaves the three cases ($N\ge5$ is impossible), and in case $N=2$ the coefficient $c$ of $u^4$ is non-zero because otherwise $O$ is a
singular point. At $P=(0:v_0:w_0)\in C\cap\ell$ we have $\partial_uF(P)=0$, so $C$ is smooth at $P$ iff $(v_0:w_0)$ is a simple root of $F_4$, and then
$T_P=\{\partial_vF_4(P)v+\partial_wF_4(P)w=0\}$ passes through $O$.
\end{proof}

\begin{theorem}\label{thm:homologies}
Let $\sigma$ be a homology of order $N$ preserving a smooth quartic $C$, and let $P$ be one of the four points of $C$ on its axis.
\begin{enumerate}[label=\rm(\alph*)]
\item $N=2$: $P$ is either a hyperflex or a non-flex sextactic point with $w_2(P)\in\{1,3\}$; in the latter case the vanishing sequence of $V_3$ consists of six even and four odd numbers, $w_3(P)\in\{1,3\}$, and $w_3(P)=1$ iff the sequence is $\{0,\ldots,8,10\}$.
\item $N=3$: $P$ is an ordinary flex with vanishing sequence $\{0,\ldots,7,9,12\}$ of $V_3$, so $w_3(P)=4$. Explicitly, with $T_P=\V(t)$ and $(v_1:w_1)$ the root of $F_1$, put $\lambda=F_4(v_1,w_1)/t(v_1,w_1)^4$ and $B=(\lambda t^4-F_4)/F_1$; then $E_P:u^3=B(v,w)$ satisfies
  \[ F=F_1\cdot(u^3-B)+\lambda\,t^4,\qquad E_P\cdot T_P=3P,\qquad E_P\cdot C=12P. \]
Moreover the centre $O$ is a hyperflex with tangent line $\V(F_1)$.
\item $N=4$: $P$ is a hyperflex; the four hyperflexes on the axis are collinear and their tangent lines pass through $O$.
\end{enumerate}
\end{theorem}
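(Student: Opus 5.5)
We work throughout in the normal form of Proposition~\ref{prop:homnormal}, with $P=(0:v_0:w_0)$, $\ell=\{u=0\}$ transversal to $C$ at $P$, and $T_P=PO$. We choose affine coordinates centred at $P$: $y$ is a linear form in $(v,w)$ vanishing at $P$, so that $T_P=\{y=0\}$ contains $O$; $x=u$ (dehomogenised); and a second form in $(v,w)$ is set to $1$. Since $\ell$ is transversal at $P$, $x$ is a local parameter $t$. The key observation is that $\sigma$ acts by $x\mapsto\zeta x$, $y\mapsto y$ and preserves the branch of $C$ at $P$. Hence $y$, as a power series in $t$, is a series in $t^N$. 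The section spaces $V_n$ are $\sigma$-stable, and $\sigma$ acts on local expansions by $t\mapsto\zeta t$. We may therefore choose bases of eigenvectors, and each eigensection has orders in a single residue class mod $N$. This gives the parity and congruence statements.

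\emph{Case $N=2$.} Here $y=\alpha t^{k}+\dots$ with $k$ even, so $k\in\{2,4\}$, i.e.\ $P$ is a hyperflex or not a flex. Suppose $P$ is not a flex. The $V_2$-eigenspaces are spanned by the classes of $\{1,x^2,y,y^2\ (\text{and } v,w\text{-parts})\}$ (even, dimension $4$) and $\{x,xy\}$ (odd, dimension $2$). Their orders give even orders $0,2,4,m$ and odd orders $1,3$. So $m=I_P(C,D_P)$ is even and at least $6$, and by Theorem~\ref{thm:KE} it equals $6$ or $8$, giving $w_2(P)\in\{1,3\}$. For $V_3$, we count monomials $x^iy^j$ with $i+j\le3$ by the parity of $i$. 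The even ones are $1,y,y^2,y^3,x^2,x^2y$, six of them, and the odd ones are $x,xy,xy^2,x^3$, four of them. Since $F$ itself is $\sigma$-even in these coordinates, the kernel does not mix the classes, so $V_3$ has a six-dimensional even part and a four-dimensional odd part. Its vanishing sequence therefore has six even and four odd entries.

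We then compare with the $1$- and $3$-sextactic rows of Theorem~\ref{thm:KE}. The sequence $0,\ldots,9$ has five even and five odd entries, and $0,\ldots,8,12$ also has only five odd entries, so both are excluded. The $3$-sextactic sequence $0,\ldots,6,8,9,10$ has six even and four odd entries. The remaining $1$-sextactic option $0,\ldots,8,10$ also has six even and four odd entries. Hence $w_3(P)\in\{1,3\}$, and $w_3(P)=1$ exactly for $\{0,\ldots,8,10\}$.

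\emph{Case $N=3$.} Now $y=\alpha t^{k}+\dots$ with $3\mid k$, so $k=3$ and $P$ is an ordinary flex. For the explicit part, we note that $\lambda$ is defined because $t(v_1,w_1)\ne0$. This holds because the root of $F_1$ is not $P$: $F_1(P)\neq0$, since otherwise $\partial_u$-type terms would make $C$ singular at $O$-related points; more simply, if $F_1(v_0,w_0)=0$ one checks that $C$ is singular at $P$. Then $\lambda t^4-F_4$ vanishes at $(v_1:w_1)$, so $F_1$ divides it, and $F=F_1(u^3-B)+\lambda t^4$ holds identically. Next, $E_P\cap T_P$ is given by $t=0$ and $u^3=B$. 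Since $B\equiv -F_4/F_1$ on $T_P$ and $F_4$ vanishes simply at $P$, $B$ vanishes simply at $P$ on $T_P$, and $E_P$ meets $T_P$ only there. Equivalently, on $T_P$ the cubic restricts to $u^3-B|_{T_P}$, and one checks that this is $c\cdot(\text{coordinate vanishing at }P)^3$ along the line $PO$, since $B|_{T_P}$ is linear in the coordinate $v$ or $w$. This gives $E_P\cdot T_P=3P$. Proposition~\ref{prop:totalcontact}(iii) then yields $E_P\cdot C=12P$ and $w_3(P)=4$, which is the sequence $\{0,\ldots,7,9,12\}$. For the centre $O=(1:0:0)$, the local equation is $F_4(v,w)+F_1(v,w)=0$, whose tangent is $\V(F_1)$. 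Restricting to $F_1=0$ leaves $F_4$, which has order $4$, so $O$ is a hyperflex.

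\emph{Case $N=4$.} Now $4\mid k$, so $P$ is a hyperflex. All four points lie on $\ell$, and their tangents pass through $O$ by Proposition~\ref{prop:homnormal}.

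The main obstacle is the bookkeeping in the explicit part of (b), in particular verifying $E_P\cdot T_P=3P$ cleanly and the nonvanishing of $t(v_1,w_1)$. The parity argument in (a) also requires care: one must check that a $\sigma$-eigenbasis really exists on the quotient $V_3$, which needs $F$ to be $\sigma$-invariant rather than merely semi-invariant. This holds for $N=2$ in the given normal form.
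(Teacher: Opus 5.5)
Your framework is the paper's: the orders of $\sigma$-eigensections lie in a single residue class mod $N$. Your remark that $y$ is a power series in $t^N$ is a clean way to get the flex type. However, in (b) you derive $w_3(P)=4$ only through $E_P\cdot T_P=3P$ and Proposition~\ref{prop:totalcontact}, and your verification of $E_P\cdot T_P=3P$ is wrong.

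The line $T_P=PO$ passes through the centre $O=(1:0:0)$, so its points are $(u:sv_0:sw_0)$. Along $T_P$ the binary coordinates $(v:w)$ are constant (away from $O$), and it is $u$ that varies. So $B$ neither ``vanishes simply at $P$ on $T_P$'' nor is it linear there. From $F_1B=\lambda t^4-F_4$ and $F_1(v_0,w_0)\neq0$ you get $B(v_0,w_0)=0$, hence $B(sv_0,sw_0)=s^3B(v_0,w_0)=0$. Thus $B$ vanishes \emph{identically} on $T_P$ and $(u^3-B)|_{T_P}=u^3$, which is exactly $3P$. Your own relation $B=-F_4/F_1$ on $T_P$ already shows this, since $F_4$ vanishes identically on $T_P$. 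If instead $B|_{T_P}=cs^3$ with $c\neq0$, then $u^3-cs^3$ would have three distinct zeros on $T_P$, none of them $P$. So the step as written does not deliver $E_P\cdot T_P=3P$.

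Your justification of $F_1(v_0,w_0)\neq0$ is also incorrect. $C$ is smooth at $P$ regardless, because $\partial_uF=3u^2F_1$ vanishes on $\ell$ and $\nabla F_4(v_0,w_0)\neq0$ (as $F_4$ is square-free). The right reason is that $F$ restricted to the line $\V(F_1)=\{(u:sv_1:sw_1)\}$ equals $s^4F_4(v_1,w_1)$. So $F_4(v_1,w_1)=0$ would make this line a component of $C$. This single fact gives $(v_1:w_1)\neq(v_0:w_0)$, hence $t(v_1,w_1)\neq0$. It also gives $\lambda\neq0$ and the contact order $4$ at $O$, both of which you use without comment.

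Three smaller points. First, in (a) the sequence $\{0,\ldots,8,12\}$ has six even entries $0,2,4,6,8,12$ and four odd ones, so parity does not exclude it (cf.\ Remark~\ref{rem:naive}). Your conclusion $w_3(P)\in\{1,3\}$ survives only because that sequence has weight $3$.

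Second, in (b) you do not need the explicit cubic to get the vanishing sequence. The eigenspace decomposition you already set up gives $\dim S_3^{(0)}=5$, spanned by $v^3,v^2w,vw^2,w^3,u^3$. That forces five distinct multiples of $3$ in the sequence, and together with Lemma~\ref{lem:flex} and B\'ezout this gives $m=12$. This is the paper's route, where $E_P$ is then only an explicit witness.

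Third, for $n\le3$ one has $V_n=S_n$, so there is no kernel to check. In any case, semi-invariance of $F$ would already make $F\cdot S_{n-4}$ stable under $\sigma$.
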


\begin{proof}
By Proposition~\ref{prop:homnormal}, $\ell$ is transversal to $C$ at $P$, so $\sigma$ acts trivially on $T_P\ell$ and, being non-trivial on $C$ near its isolated fixed point $P$, by a primitive $N$-th root of unity on $T_PC$. Hence $\tau=u/G_0$, with $G_0$ a form in $v,w$ not vanishing at $P$, is a local coordinate at $P$ with $\sigma^*\tau=\zeta\tau$. The space $S_n$ splits into the eigenspaces $S_n^{(a)}$ spanned by the monomials $u^iv^jw^k$ with $i\equiv a\pmod N$, and for $G\in S_n^{(a)}$ the function $G/G_0^{n}$ is a $\zeta^a$-eigenfunction, so its expansion in $\tau$
contains only powers $\tau^k$ with $k\equiv a\pmod N$. Since sections with orders in different residue classes cannot cancel, the vanishing sequence of $V_n$ is the union over $a$ of $\dim S_n^{(a)}$ distinct numbers $\equiv a\pmod N$ (for $n\le 3$).

(a) For $V_1$: two even orders from $\{0,2,4\}$ ($v,w$) and the odd order $1$ ($u$), so the sequence is $\{0,1,2\}$ or $\{0,1,4\}$: $P$ is not an ordinary flex. If $P$ is not a flex, $V_2$ has four even and two odd orders (from $v^2,vw,w^2,u^2$ and $uv,uw$), so $m$ in Lemma~\ref{lem:nonflex} is even, $m\in\{6,8\}$. For $V_3$ the dimensions are $6$ ($v^3,\dots,w^3,u^2v,u^2w$) and $4$ ($u^3,uv^2,uvw,uw^2$); six distinct even and four distinct odd numbers sum to at least $30+16=46>45$, with equality only for $\{0,2,4,6,8,10\}\cup\{1,3,5,7\}$. By Theorem~\ref{thm:KE} a $1$-sextactic point has $w_3\in\{0,1,3\}$ and a $3$-sextactic one $w_3=3$; parity excludes $0$.

(b) For $V_1$: residue $0$ gives two orders from $\{0,3\}$, residue $1$ gives the order $1$ of $u$; so the sequence is $\{0,1,3\}$ and $P$ is an ordinary flex. For $V_3$: $\dim S_3^{(0)}=5$ ($v^3,v^2w,vw^2,w^3,u^3$), so all of $0,3,6,9,12$ occur; residues $1$ and $2$ contribute three and two numbers, which contain $\{1,4,7\}$ and $\{2,5\}$ by
Lemma~\ref{lem:flex}. Hence the sequence is $\{0,\ldots,7,9,12\}$. The explicit formula: $\lambda t^4-F_4$ vanishes at the root of $F_1$, so $B$ is a binary cubic, and $B(P)=0$ because $t(P)=F_4(P)=0$. On $T_P$, i.e.\ on the points $(u:sv_0:sw_0)$, the
cubic $u^3-B$ restricts to $u^3$, so $E_P\cdot T_P=3P$, and $E_P\cdot C=E_P\cdot(\lambda t^4)=12P$. Finally $O\in C$ since $F$ has no $u^4$ term, and on the line $\V(F_1)$ through $O$ the form $F$ restricts to $F_4(v_1,w_1)s^4$ with $F_4(v_1,w_1)\neq0$; so $\V(F_1)$ meets $C$ only at $O$, with multiplicity $4$.

(c) For $V_1$: residue $0$ gives two orders from $\{0,4\}$, residue $1$ the order $1$; the sequence $\{0,1,4\}$ means $I_P(C,T_P)=4$.
\end{proof}

\begin{corollary}\label{cor:axes}
Every point of $C$ on the axis of an involution in $\Aut(C)$ is a $3$-Weierstrass point, and is either a hyperflex or a sextactic point. Every point on the axis of a homology of order $3$ (resp.\ $4$) is an ordinary flex of $3$-weight $4$ (resp.\ a hyperflex).
\end{corollary}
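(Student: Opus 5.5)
The corollary follows from Theorem~\ref{thm:homologies} by reading off the three cases. No new computation is needed; the plan is to match each clause of the corollary with the corresponding part of the theorem. In each case we take an automorphism $\sigma\in\Aut(C)$ that is a homology of order $N$. Since $C$ is a smooth plane quartic, every automorphism is induced by a projective transformation, so $\sigma$ is a homology of $\PP^2$ preserving $C$. By Proposition~\ref{prop:homnormal} we have $N\in\{2,3,4\}$, and the axis meets $C$ in four distinct points $P$.

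For an involution ($N=2$) we apply Theorem~\ref{thm:homologies}(a). Each $P$ on the axis is either a hyperflex or a non-flex sextactic point with $w_2(P)\in\{1,3\}$, which gives the dichotomy. It remains to check that $P\in\W_3(C)$ in both cases.
\begin{itemize}[label=--]
\item If $P$ is a hyperflex, Lemma~\ref{lem:hyperflex} gives $w_3(P)=5>0$.
\item If $P$ is sextactic, part (a) gives $w_3(P)\in\{1,3\}$, hence $w_3(P)>0$.
\end{itemize}
The essential input is the parity argument already in the theorem: six distinct even and four distinct odd orders sum to at least $46>45$. This argument forbids weight $0$.

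For $N=3$, Theorem~\ref{thm:homologies}(b) states directly that $P$ is an ordinary flex with vanishing sequence $\{0,\dots,7,9,12\}$. Hence $w_3(P)=(28+9+12)-45=4$.

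For $N=4$, part (c) states that $P$ is a hyperflex.

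The only point needing care is the identification of automorphisms of $C$ with projective transformations preserving $C$. This holds because the embedding is canonical, so $\Aut(C)$ acts linearly on $H^0(K_C)$, and hence projectively on the plane. With that in place, "homology in $\Aut(C)$" is meaningful and the theorem applies. I expect no real obstacle beyond this remark.
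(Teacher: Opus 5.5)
Your proposal is correct and matches the paper, which gives no separate proof and reads the corollary directly off Theorem~\ref{thm:homologies}(a)--(c), using Lemma~\ref{lem:hyperflex} for the hyperflex case. One implicit step is worth stating: every involution of $\PP^2$ is a (harmonic) homology, as the paper notes when it defines homologies, so part~(a) applies to every involution in $\Aut(C)$.
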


The parity argument works in every order, which gives a partial answer to the question behind the conjecture of \cite{KamelElshareef}, see Section~\ref{sec:fermat4}.

\begin{theorem}\label{thm:allq}
Let $C$ be a smooth plane quartic and $q\ge2$. Then $\W_q(C)$ contains all hyperflexes of $C$, all points on axes of involutions preserving $C$, and all points on axes of homologies of order three preserving $C$.
\end{theorem}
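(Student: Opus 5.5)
The plan is to treat the three families separately. Hyperflexes need no work, and for axis points of involutions and of homologies of order three I will generalise the eigenspace (parity) argument from the proof of Theorem~\ref{thm:homologies} from $n\le3$ to every $q\ge2$.

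\emph{Hyperflexes.} This is already the last claim of Lemma~\ref{lem:hyperflex}: the form $t_P^q$ has order $4q\ge 4q-2=N_q$ at $P$. By Proposition~\ref{prop:contact}, $P\in\W_q(C)$.

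\emph{Axis points of homologies, general mechanism.} Let $\sigma$ be a homology of order $N\in\{2,3\}$ preserving $C$, written in the normal form of Proposition~\ref{prop:homnormal}. For $N=2,3$ that normal form gives $F\circ\sigma=F$, so $F\in S_4^{(0)}$. Multiplication by $F$ therefore maps $S_{q-4}^{(a)}$ into $S_q^{(a)}$, and $V_q$ splits as
\[ V_q=\bigoplus_a V_q^{(a)},\qquad \dim V_q^{(a)}=\dim S_q^{(a)}-\dim S_{q-4}^{(a)} . \]
I will then repeat the local argument of Theorem~\ref{thm:homologies} verbatim. Take $\tau=u/G_0$ with $\sigma^*\tau=\zeta\tau$; elements of $V_q^{(a)}$ have $\tau$-expansions supported in exponents $\equiv a\pmod N$. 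Consequently the vanishing sequence of $V_q$ at $P$ contains exactly $\dim V_q^{(a)}$ numbers in each residue class $a$. If $w_q(P)=0$, the sequence would be $\{0,\dots,r_q\}$ with $r_q=4q-3$, whose residue counts are fixed. So it suffices to show that the numbers $\dim V_q^{(a)}$ differ from the residue counts of $\{0,\dots,4q-3\}$.

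\emph{$N=2$.} Write $s_n=\dim S_n^{(0)}-\dim S_n^{(1)}=\sum_{i=0}^n(-1)^i(n-i+1)=\lfloor n/2\rfloor+1$, with $s_n=0$ for $n<0$. This gives $\dim V_q^{(0)}-\dim V_q^{(1)}=2$ for every $q\ge2$: for $q=2,3$ directly, and for $q\ge4$ as $(\lfloor q/2\rfloor+1)-(\lfloor q/2\rfloor-1)$. On the other hand $\{0,\dots,4q-3\}$ contains equally many even and odd numbers. Hence $w_q(P)>0$. This covers both the sextactic and the hyperflex alternatives of Theorem~\ref{thm:homologies}(a).

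\emph{$N=3$.} Here I will compute $c_a(n)=\dim S_n^{(a)}=\sum_{0\le i\le n,\ i\equiv a\ (3)}(n-i+1)$ explicitly, according to $q\bmod 3$. The aim is to show $\dim V_q^{(0)}=c_0(q)-c_0(q-4)\neq\#\{0\le k\le 4q-3:\ 3\mid k\}=\lceil(4q-2)/3\rceil$. Two checks support this. For $q=4$ one gets $7-1=6\neq5$. For $q=2,3$ the claim is already known from Lemma~\ref{lem:flex} and Theorem~\ref{thm:homologies}(b).

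The main obstacle is this residue bookkeeping for general $q$ when $N=3$. I expect $c_0(q)-c_0(q-4)$ to be roughly $\tfrac{4q}{3}+c$ with a constant $c$ that exceeds the corresponding constant for $\{0,\dots,4q-3\}$. I would organise the computation as a quasi-polynomial in $q$ with period $3$ and verify the inequality in each class. If that becomes unwieldy, the fallback is to compare all three residue counts simultaneously: any deviation in any class already forces $w_q(P)>0$.
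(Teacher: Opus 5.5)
Your route is the paper's: you split $V_q=S_q/F\,S_{q-4}$ into $\sigma$-eigenspaces (valid since $F\in S^{(0)}$), observe that the vanishing sequence at an axis point then has exactly $\dim V_q^{(a)}$ entries in each residue class mod $N$, and show by counting monomials that these numbers differ from the residue counts $b_a(q)$ of $\{0,\dots,4q-3\}$. Your hyperflex and $N=2$ cases are complete. The closed form $s_n=\lfloor n/2\rfloor+1$, which gives $\dim V_q^{(0)}-\dim V_q^{(1)}=2$ against $b_0(q)=b_1(q)$, is slightly more direct than the paper's argument. The paper instead uses the generating function $\sum_q h(q)t^q=(1-t^4)/\bigl((1-\chi t)(1-t)^2\bigr)$ in the representation ring of $\ZZ/N$ to get $h(q)-h(q-N)=4(1+\chi+\dots+\chi^{N-1})=b(q)-b(q-N)$ for $q\ge N+2$, and then checks small $q$ directly.

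The one real gap is $N=3$. You state the inequality $c_0(q)-c_0(q-4)\neq\lceil(4q-2)/3\rceil$ only as an expectation, so this case is not yet proved. It closes in two lines. Write $n=3k+\rho$ with $0\le\rho\le2$. Then $c_0(n)=\sum_{m=0}^{k}(n-3m+1)=\tfrac{3k(k+1)}{2}+(k+1)(\rho+1)$, and $c_0(n)=0$ for $n<0$. For $q=3k+\rho\ge2$ this gives $c_0(q)-c_0(q-4)=4k+\rho+1$, whereas $\lceil(4q-2)/3\rceil=4k+\rho$. So $\dim V_q^{(0)}$ exceeds the count by exactly one, which is the paper's identity $h_0=b_0+1$.

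Alternatively, follow the paper: $h_0(q)-h_0(q-3)$ and $b_0(q)-b_0(q-3)$ both equal $4$ for $q\ge5$, so the discrepancy propagates from $q=2,3,4$. For this induction the base cases must be the counts themselves, namely $h_0=3,5,6$ against $b_0=2,4,5$. The results you quote, Lemma~\ref{lem:flex} and Theorem~\ref{thm:homologies}(b), do give $w_q(P)>0$ for $q=2,3$, which settles the theorem for those $q$. They do not give the inequality $h_0(q)\neq b_0(q)$ that the induction carries forward.
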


\begin{proof}
Hyperflexes: Lemma~\ref{lem:hyperflex}. For the other points let $\sigma$ be the homology, $N\in\{2,3\}$, and let $h_a(q)$ be the dimension of the
$\zeta^a$-eigenspace of $\sigma$ on $V_q=S_q/F\cdot S_{q-4}$ (note $F\in S^{(0)}$). If $w_q(P)=0$, the vanishing sequence is $0,1,\dots,N_q-1$, and by the proof of Theorem~\ref{thm:homologies} $h_a(q)$ equals $b_a(q)=\#\{0\le i<N_q:\ i\equiv a\pmod
N\}$. But a count of monomials gives
\[ (h_0,h_1)=(b_0+1,\,b_1-1)\quad(N=2,\ q\ge2),\qquad
  (h_0,h_1,h_2)=(b_0+1,\,b_1,\,b_2-1)\quad(N=3,\ q\ge1).\]
Indeed, regard $h(q)=\sum_ah_a(q)\chi^a$ as an element of the representation ring of $\ZZ/N$, $\chi$ the character of $u$. Then $\sum_qh(q)t^q=(1-t^4)/\bigl((1-\chi t)(1-t)^2\bigr)$, hence
\[ (1-t^N)\sum_qh(q)t^q=\frac{(1+t+t^2+t^3)(1+\chi t+\dots+\chi^{N-1}t^{N-1})}{1-t}, \]
so $h(q)-h(q-N)=4(1+\chi+\dots+\chi^{N-1})$ for $q\ge N+2$. The same holds for $b(q)=\sum_ab_a(q)\chi^a$ when $q-N\ge2$, since then $[N_{q-N},N_q)$ consists of $4N$ consecutive integers. The identities are checked directly for $q\le N+1$ (and, in \sing{general.sing}, for all $q\le60$).
\end{proof}

In particular, the fixed points of involutions are $q$-Weierstrass points for all $q\ge2$, whereas Lemma~\ref{lem:flex} and Theorem~\ref{thm:KE} show that an ordinary flex need not be a $3$-Weierstrass point. For a general quartic no flex is, see Remark~\ref{rem:general} below.

\begin{remark}\label{rem:general}
For a general smooth quartic none of the special behaviour above occurs. Consider
\begin{align*}
F_0={}&-5X^4+5X^3Z-6X^2Y^2-4X^2YZ+6X^2Z^2+7XY^3-4XY^2Z\\
&-3XYZ^2+9XZ^3+7Y^4+5Y^3Z+5Y^2Z^2+8YZ^3+5Z^4.
\end{align*}
The script \sing{generic.sing} computes $\Omega_1,\Omega_2,\Omega_3$ of $C_0=\V(F_0)$
over $\QQ$, checks over $\QQ$ that $\W_1(C_0)$ and $\W_2(C_0)$ are reduced ($24$ and $108$ distinct points), and checks modulo the primes $32003$ and $1000003$ that $\V(F_0,\Omega_3)$ consists of $300$ distinct points and that $\V(F_0,\Omega_3,\Hess)=\V(F_0,\Omega_3,\Omega_2)=\emptyset$. Since $\V(F_0,\Omega_3)$ is finite and flat of degree $300$ over $\ZZ_{(p)}$, étaleness of the special fibre implies that of the generic fibre, and empty special fibres force empty generic fibres. Hence $\W_1(C_0)$, $\W_2(C_0)$ and $\W_3(C_0)$ are reduced, consisting of $24$, $108$ and $300$ distinct points, and no point of $\W_3(C_0)$ is a
flex or a sextactic point. These properties are open in the space of smooth quartics, so for a \emph{general} smooth quartic the $3$-Weierstrass points are $300$ distinct dectactic points. In particular, the case $m=8$ of Lemma~\ref{lem:flex} is the general one, and sextactic points are in general not $3$-Weierstrass points.
\end{remark}

\section{The Fermat quartic}\label{sec:fermat}

Let $\Fermat:\ X^4+Y^4+Z^4=0$. Its automorphism group has order $96$, see \cite{KuribayashiKomiya,Dolgachev}. This group is generated by the diagonal matrices with entries
in $\{1,i\}$ and the permutation matrices, and it contains exactly $15$ involutions, three cyclic groups of homologies of order four and no homology of order three (\sing{groups.sing}). These homologies are: \begin{itemize}[leftmargin=2em]
\item the three homologies of order four $(X:Y:Z)\mapsto(iX:Y:Z)$, $(X:iY:Z)$, $(X:Y:iZ)$, with axes $X=0$, $Y=0$, $Z=0$; their squares are the sign changes, involutions with the same axes;
\item the twelve involutions such as $(X:Y:Z)\mapsto(\varepsilon Y:\varepsilon^{-1}X:Z)$,
  $\varepsilon^4=1$, with axes $X=\varepsilon Y$, $Y=\varepsilon Z$, $Z=\varepsilon X$.
\end{itemize}

\paragraph{Flexes.} By Theorem~\ref{thm:homologies}(c) the twelve points $\Fermat\cap\V(XYZ)$, namely $(0:\zeta:1)$, $(\zeta:0:1)$, $(\zeta:1:0)$ with
$\zeta^4=-1$, are hyperflexes. Directly, the line $Y=\zeta Z$ meets $\Fermat$ only at $(0:\zeta:1)$, with multiplicity $4$. Since $2\cdot 12=24=\deg\W_1(\Fermat)$, there are no other flexes (indeed $\Hess(F)=1728\,X^2Y^2Z^2$). Let $H$ denote the divisor of these twelve points.

\paragraph{The axis points.} Each of the twelve lines $X=\varepsilon Y$, $Y=\varepsilon Z$, $Z=\varepsilon X$ meets $\Fermat$ in four distinct points (e.g.\ $X=Y$ gives $2Y^4+Z^4=0$), none of them a hyperflex, and two such lines have no common point on $\Fermat$. Let $S$ be the resulting divisor of $48$ points; it is cut out transversally by
\[ \Delta_{12}=(X^4-Y^4)(Y^4-Z^4)(Z^4-X^4). \]
By Theorem~\ref{thm:homologies}(a) the points of $S$ are sextactic and $3$-Weierstrass.

\begin{theorem}[Fermat quartic]\label{thm:fermat}
Put $e_2=X^4Y^4+Y^4Z^4+Z^4X^4$, $e_3=X^4Y^4Z^4$ and
\[ \Pi_{48}=144\,e_2^{6}+9624\,e_2^{3}e_3^{2}-22379\,e_3^{4}. \]
Then, modulo $X^4+Y^4+Z^4$ and up to non-zero scalars,
\[ \Omega_2\equiv X^5Y^5Z^5\,\Delta_{12},\qquad
  \Omega_3\equiv X^5Y^5Z^5\,\Delta_{12}\,\Pi_{48}.
\]
Consequently:
\begin{enumerate}[label=\rm(\alph*)]
\item $\W_2(\Fermat)=5H+S$: the sextactic points are the $48$ axis points, each of $2$-weight $1$.
\item $\W_3(\Fermat)=5H+S+D$, where $D=\Fermat\cap\V(\Pi_{48})$ consists of $192$
  distinct dectactic points. Thus
\end{enumerate}
\[
\begin{array}{lccc}
\toprule
\textup{type} & \textup{number} & \textup{vanishing sequence of }V_3 & 3\textup{-weight}\\
\midrule
\textup{hyperflexes} & 12 & 0,1,2,3,4,5,6,8,9,12 & 5\\
\textup{sextactic points} & 48 & 0,1,\ldots,8,10 & 1\\
\textup{dectactic points} & 192 & 0,1,\ldots,8,10 & 1\\
\bottomrule
\end{array}
\]
and $\deg\W_3(\Fermat)=60+48+192=300$, with $252$ distinct $3$-Weierstrass points.
\end{theorem}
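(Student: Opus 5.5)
The plan splits into two parts: first determine as much of $\W_2$ and $\W_3$ as possible from symmetry and local analysis, then identify the remaining factor by computation.

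\textbf{Step 1: the parts forced by symmetry.} By Lemma~\ref{lem:hyperflex} each of the twelve hyperflexes has $w_2=w_3=5$. By Theorem~\ref{thm:homologies}(a) each of the $48$ axis points in $S$ is sextactic with $w_2\in\{1,3\}$ and $w_3\in\{1,3\}$. Since $\deg\W_2(\Fermat)=108=60+48$, the weights of $H$ and $S$ already exhaust the degree if every point of $S$ has $w_2=1$. Weight $3$ at any axis point would overshoot $108$, so all $w_2=1$ and there are no further sextactic points. This gives $\W_2=5H+S$.

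Now $X^5Y^5Z^5\Delta_{12}$ is a form of degree $27$ whose divisor on $\Fermat$ is exactly $5H+S$. Here $XYZ$ cuts $H$ transversally, since each coordinate line meets $\Fermat$ in four distinct points, and $\Delta_{12}$ cuts $S$ transversally. A form of degree $d_2=27$ is determined modulo $F$ up to scalar by its divisor: two such forms have a ratio that is a regular function without zeros on $C$, and projective normality then gives proportionality modulo $F$. Hence $\Omega_2\equiv X^5Y^5Z^5\Delta_{12}$.

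\textbf{Step 2: factoring $\Omega_3$.} For $\W_3$ we know that $\W_3\ge 5H+S'$, where $S'$ is the part of $S$ with its weights, and $w_3\ge1$ on $S$. Therefore $X^5Y^5Z^5\Delta_{12}$ divides $\Omega_3|_C$ as divisors. By projective normality, $\Omega_3\equiv X^5Y^5Z^5\Delta_{12}\cdot R$ modulo $F$ for some form $R$ of degree $48$.

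Corollary~\ref{cor:covariance} shows that $R$ is semi-invariant under $\Aut(\Fermat)$ modulo $F$. The group contains the diagonal matrices with entries in $\{1,i\}$ and the permutations, and the character of $X^5Y^5Z^5\Delta_{12}$ is easily computed. This forces $R$ modulo $F$ to be a symmetric polynomial in $X^4,Y^4,Z^4$, possibly times a sign character. Using $X^4+Y^4+Z^4\equiv0$, every such form of degree $48$ is a combination of $e_2^6$, $e_2^3e_3^2$ and $e_3^4$. Odd powers of $e_3$ would give a non-matching character, or else are excluded by degree reasons.

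The three coefficients cannot be obtained by symmetry alone. The plan is to determine them by evaluating $\Omega_3$, via the Hamiltonian formula of Proposition~\ref{prop:hamilton}, at a few explicit points of $\Fermat$, or simply by computing $\Omega_3$ modulo $F$ directly and dividing, as in \sing{fermat.sing}. This yields $\Pi_{48}$. The exact identification of this cofactor is a computer calculation, and it is the main obstacle of the proof; a cross-check is the affine formula of \cite{KamelElshareef} recalled in Remark~\ref{rem:KEcomparison}.

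\textbf{Step 3: the consequences.} It remains to show that $\V(F,\Pi_{48})$ consists of $192$ distinct points, disjoint from $H\cup S$. Substituting $Z^4=-X^4-Y^4$, the form $\Pi_{48}$ becomes a binary form in $s=X^4$, $t=Y^4$ of degree $12$ in $(s,t)$. Squarefreeness of this binary form, checked by computing a discriminant, together with the observation that no root has $st(s+t)=0$ or $s=t$ and so on, gives $12\cdot 16=192$ distinct points. Each of them is transversal and hence has weight $1$, which forces $w_3=1$ on $S$ as well by the degree count $300=60+48+192$. Weight-one points have vanishing sequence $0,\dots,8,10$. The points of $D$ are neither flexes (all flexes lie in $H$) nor sextactic (all sextactic points lie in $S$), so they are dectactic.
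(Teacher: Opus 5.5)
Your proposal is correct and follows the paper's proof essentially step by step: the lower bound $5H+S$ from Lemma~\ref{lem:hyperflex} and Theorem~\ref{thm:homologies}, the degree count that pins down $\W_2$ and $\Omega_2$, a residual form of degree $48$ obtained by projective normality and identified with $\Pi_{48}$ by machine computation, and then squarefreeness of the roots plus disjointness from $H\cup S$ (you use the binary form in $X^4,Y^4$, the paper the affine polynomial $P_{48}$ in $u=x^4$). Your extra symmetry step, which reduces the cofactor via Corollary~\ref{cor:covariance} to the span of $e_2^6$, $e_2^3e_3^2$, $e_3^4$, is also correct: the characters of $\Omega_3$ and $X^5Y^5Z^5\Delta_{12}$ agree, and $2i+3j=12$ forces $j$ even. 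The paper only notes this symmetry in a remark after the proof, but it lets you determine $\Pi_{48}$ by evaluating at a couple of points instead of dividing in full.
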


\begin{proof}
On $\Fermat$ the divisor of $X^5Y^5Z^5$ is $5H$ (each coordinate line meets $\Fermat$ transversally in four hyperflexes) and that of $\Delta_{12}$ is $S$. By Lemma~\ref{lem:hyperflex} and Theorem~\ref{thm:homologies}, $\W_n(\Fermat)\ge 5H+S$ for $n=2,3$.

(a) Both sides have degree $108$, so $\W_2(\Fermat)=5H+S=\divisor(X^5Y^5Z^5\Delta_{12})$,
and two forms of degree $27$ with the same divisor on $\Fermat$ agree modulo $F$ up to a scalar. The points of $S$ are not flexes, hence sextactic.

(b) The divisor $\W_3(\Fermat)-5H-S$ is effective of degree $192$ and lies in $|\OO_{\Fermat}(75-27)|$, so it is cut out by a form $\Pi$ of degree $48$ with
$\Omega_3\equiv X^5Y^5Z^5\Delta_{12}\Pi$. That $\Pi\equiv\Pi_{48}$ is verified in \sing{fermat.sing} with the Hamiltonian formula. In the affine chart it reads
\begin{equation}\label{eq:fermat-affine}
  W^{(3)}_x=c\,\frac{x^5P_{12}(x)P_{48}(x)}{y^{130}},\qquad
  P_{12}(x)=(x^4-1)(x^4+2)(2x^4+1),
\end{equation}
\[
\begin{aligned}
P_{48}(x)={}&144x^{48}+864x^{44}-6600x^{40}-40920x^{36}-134531x^{32}-283100x^{28}-364194x^{24}\\
&-283100x^{20}-134531x^{16}-40920x^{12}-6600x^{8}+864x^{4}+144,
\end{aligned}
\]
in accordance with \cite[Section~4.1]{KamelElshareef}, where we used $f_y^{45}=(4y^3)^{45}$,
$\Delta_{12}(x,y,1)\equiv P_{12}(x)$ and $\Pi_{48}(x,y,1)\equiv P_{48}(x)$ on $\Fermat$.
As a polynomial in $u=x^4$, $P_{48}$ is irreducible over $\QQ$, square-free, and its roots are different from $0,-1,1,-2,-\tfrac12$. Hence each of its $48$ roots $x$ gives four distinct points of $\Fermat$ (as $y^4=-1-x^4\neq0$), and these $192$ points are
distinct from $H$ and $S$. There are no points of $D$ at infinity, since there $e_3=0$ and $e_2=-X^8\neq 0$. The degree count forces weight $1$ at every point of $D$, and the points of $D$ are neither flexes nor sextactic by (a). The vanishing sequences follow from Lemma~\ref{lem:hyperflex} and from the fact that $\{0,\ldots,8,10\}$ is the only
vanishing sequence of weight $1$.
\end{proof}

\begin{remark}
The polynomial $\Pi_{48}$ is symmetric in $X^4,Y^4,Z^4$, whereas $\Delta_{12}\Pi_{48}$
is alternating under permutations of these variables, in accordance with Corollary~\ref{cor:covariance}: a transposition $g$ of two coordinates has $\det g=-1$ and $\Omega_3\circ g\equiv-\Omega_3$. In terms of $a=X^4$, $c=Z^4$ and $b=-a-c$ the identity $\Pi_{48}(a,b,c)=\sum_k p_k\,a^{12-k}c^k$, where $p_k$ are the
coefficients of $P_{48}$, can be checked by hand. For the second-order divisor of the Kuribayashi family, to which $\Fermat$ belongs, see \cite{AlwaleedSakai}.
\end{remark}

\subsection{Order four and a conjecture of Kamel and Elshareef}\label{sec:fermat4}

Kamel and Elshareef observed that $\W_1(\Fermat)\subseteq\W_2(\Fermat)\subseteq
\W_3(\Fermat)$ and conjectured \cite[Section~5]{KamelElshareef} that $\W_q(\Fermat)\subseteq\W_{q+1}(\Fermat)$ for every $q\ge1$. The first two inclusions are explained, and extended, by symmetry: by Theorem~\ref{thm:allq} and Theorem~\ref{thm:fermat}(a),
\begin{equation}\label{eq:W12inWq}
\operatorname{supp}\W_1(\Fermat)\cup\operatorname{supp}\W_2(\Fermat)=H\cup S\subseteq\operatorname{supp}\W_q(\Fermat)\qquad\text{for all }q\ge2,
\end{equation}
because every point of $H\cup S$ is a hyperflex or lies on the axis of an involution. The dectactic points have no such reason to persist, and in fact they do not.

\begin{theorem}\label{thm:fermat4}
Modulo $X^4+Y^4+Z^4$ and up to a non-zero scalar,
$\Omega_4\equiv X^5Y^5Z^5\,\Delta_{12}\,R_{120}$, where $R_{120}$ is a form of degree $120$ such that $\Fermat\cap\V(R_{120})$ consists of $480$ distinct points, disjoint from $H$, $S$ and $D$. Hence
\[ \W_4(\Fermat)=5H+S+R,\qquad 60+48+480=588, \]
with $540$ distinct $4$-Weierstrass points, and $D\cap\operatorname{supp}\W_4(\Fermat)=\emptyset$.
In particular, $\W_3(\Fermat)\not\subseteq\W_4(\Fermat)$, and the conjecture of \cite{KamelElshareef} fails for $q=3$.
\end{theorem}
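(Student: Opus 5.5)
The plan follows the proof of Theorem~\ref{thm:fermat}(b): first divide out the part of $\W_4(\Fermat)$ forced by symmetry, then determine the quotient by computer.

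By Lemma~\ref{lem:hyperflex}, each of the twelve hyperflexes lies in $\W_4(\Fermat)$. Since $T_P^4$ has order $16>N_4=14$, and the orders $i+4j$ ($i+j\le4$, minus one monomial) can be compared, I would first determine the exact $4$-weight of a hyperflex. The aim is to show it is $5$, matching the factor $X^5Y^5Z^5$. I would do this directly with the monomial-order argument of Lemma~\ref{lem:hyperflex}, using the basis of standard monomials modulo $F$, or else with \sing{vanseq} at $(0:\zeta:1)$. By Theorem~\ref{thm:allq}, each of the $48$ points of $S$ is $4$-Weierstrass. So $\W_4(\Fermat)-5H-S$ is effective of degree $588-108=480$ and lies in $|\OO_{\Fermat}(147-27)|$. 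By projective normality it is cut out by a form $R_{120}$ with $\Omega_4\equiv X^5Y^5Z^5\Delta_{12}R_{120}$ modulo $F$. If the weight at $H$ or $S$ turned out larger, the extra multiplicity would simply be absorbed into $R_{120}$. The later check that $R_{120}$ does not vanish on $H\cup S$ then confirms the weights $5$ and $1$.

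Next I would compute $\Omega_4$ with the Hamiltonian formula of Proposition~\ref{prop:hamilton}, using a $14\times14$ determinant of $\delta^ib_j$, where $b_j$ runs through the standard quartic monomials for a Gr\"obner basis of $(f)$. I would reduce modulo $f=x^4+y^4+1$ and divide by $x^5y^5P_{12}(x)$ to obtain $R_{120}(x,y,1)$. By the symmetry of Corollary~\ref{cor:covariance} under the diagonal $i$-scalings, $\Omega_4$ is an eigenform. Modulo $F$ it should therefore become a polynomial in $u=x^4$ alone, after accounting for monomial factors, giving $P_{120}(u)$ of degree $30$, as happened with $P_{48}$.

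For distinctness, I would check that $P_{120}$ is square-free and that its roots avoid $0$ and $-1$, where $y=0$ and the hyperflexes lie. I would also check that the roots avoid $1,-2,-\tfrac12$, which correspond to $S$, and that there are no zeros at infinity, as in the Fermat proof. Each root then gives four distinct points, so $480$ in total. Disjointness from $D$ amounts to $\gcd(P_{120},P_{48})=1$ over $\QQ$, which holds automatically if both are irreducible and distinct, or in any case by a resultant computation. This gives $|\operatorname{supp}\W_4|=12+48+480=540$ and $D\cap\operatorname{supp}\W_4=\emptyset$, which contradicts the conjecture at $q=3$.

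The main obstacle is computational: a $14\times14$ determinant of iterated Hamiltonian derivatives of degree up to $\sim 50$ over $\QQ$. I would compute it modulo several large primes, or compute $R_{120}$ through its restriction in $u$, and reconstruct or at least certify square-freeness and coprimality modulo a prime $p$. This certificate lifts to characteristic zero by the flatness argument of Remark~\ref{rem:general}: if $\V(F,R_{120})$ is étale of degree $480$ mod $p$ and $\V(F,R_{120},\Pi_{48}\Delta_{12}XYZ)$ is empty mod $p$, the same holds over $\QQ$.
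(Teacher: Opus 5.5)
Your proposal is correct and is essentially the paper's proof: compute $\Omega_4$ from the $14\times14$ Hamiltonian determinant, split off $X^5Y^5Z^5\Delta_{12}$, and certify that $R_{120}$ cuts out $480$ reduced points avoiding $H\cup S\cup D$. The differences are small. You obtain the divisibility a priori, from the exact $4$-weight $5$ at hyperflexes (orders $\{0,\dots,10,12,13,16\}$, which also makes $R_{120}\neq0$ on $H$ automatic) together with Theorem~\ref{thm:allq}, whereas the paper checks $\Omega_4\in(F,X^5Y^5Z^5\Delta_{12})$ by computer and tests transversality and disjointness exactly over $\QQ$ rather than through a polynomial in $u=x^4$ or modulo $p$. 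One small slip: a root $u$ of $P_{120}$ gives $16$ points of $\Fermat$, so your count of four points per root is right only for the $120$ roots in $x$.
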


\begin{proof}
The script \sing{fermat.sing} computes $\Omega_4$ by Proposition~\ref{prop:hamilton} from the $14$ standard monomials of degree $4$, checks that $\Omega_4\in(F,X^5Y^5Z^5\Delta_{12})$, computes $R_{120}$ by division modulo $F$, and checks that $\Omega_4\equiv X^5Y^5Z^5\Delta_{12}R_{120}$, that $\V(F,R_{120})$ consists of $480$ distinct points and that it is disjoint from $\V(F,XYZ\,\Delta_{12}\Pi_{48})$, all over $\QQ$. Since $\divisor(X^5Y^5Z^5\Delta_{12})=5H+S$ on $\Fermat$, this proves the claims.
\end{proof}

As an independent check, which does not use $\Omega_4$, \sing{fermat.sing} also computes the vanishing sequences at the point $(3105:678:1)$ of $\V(F,\Pi_{48})$ over $\FF_{33037}$, where the two curves meet transversally so that, by Hensel's lemma,
the point lifts to a point $\tilde P$ of $D$ over $\QQ_{33037}$. They are $\{0,\ldots,8,10\}$ for $V_3$ and $\{0,\ldots,13\}$ for $V_4$. Vanishing orders can only increase under reduction, so $w_4(\tilde P)=0$.

\begin{remark}
The same happens on the other quartics of this paper: on $\KK$, $\Klein$ and $\Pic$ no dectactic point is a $4$-Weierstrass point (\sing{kk.sing}, \sing{klein.sing}, \sing{picard.sing}), and on $\Klein$ not even the flexes are, see Proposition~\ref{prop:klein4}. On the other hand, all points in Theorem~\ref{thm:allq} persist, in accordance with the theorem.
\end{remark}

\section{The Komiya--Kuribayashi quartic}\label{sec:kk}

Let
\[ \KK:\ X^4+Y^4+Z^4+3(X^2Y^2+Y^2Z^2+Z^2X^2)=0,\]
be the member $a=3$ of the Ciani's pencil of quartics
$C_a:X^4+Y^4+Z^4+a(X^2Y^2+Y^2Z^2+Z^2X^2)=0$ \cite{KuribayashiKomiya,AlwaleedSakai}.
The $24$ projective transformations given by signed permutation matrices form a group $G_{24}\cong S_4$ preserving every $C_a$. Its nine involutions are harmonic homologies with axes
\[X=0,\quad Y=0,\quad Z=0,\qquad X=\pm Y,\quad Y=\pm Z,\quad Z=\pm X.\]
Its elements of order $3$ and $4$ have three distinct eigenvalues (e.g.\
$(X:Y:Z)\mapsto(Y:Z:X)$ and $(X:Y:Z)\mapsto(Y:-X:Z)$), so they are not homologies; the nine involutions are the only homologies in $G_{24}$ (\sing{groups.sing}).

\begin{proposition}\label{prop:kk-flexes}
\begin{enumerate}[label=\rm(\roman*)]
\item $\KK$ is smooth.
\item $\KK$ has exactly twelve flexes, all hyperflexes:
  \[
    (\pm1:\pm i:1)\quad\textup{and}\quad(\pm i:\varepsilon:1),\ \varepsilon\in\{\pm1,\pm i\}.
  \]
\item Each of the six lines $X=\pm iY$, $Y=\pm iZ$, $Z=\pm iX$ contains four hyperflexes, and each hyperflex lies on exactly two of them. Consequently $\KK$ is not projectively equivalent to the Fermat quartic, and $\Omega_1\equiv c\,L_+$ with
 $L_+=(X^2+Y^2)(Y^2+Z^2)(Z^2+X^2)$.
\end{enumerate}
\end{proposition}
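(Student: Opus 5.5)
The plan is to verify (i) and (ii) by direct computation with the two partial-derivative and restriction-to-lines techniques, using the Plücker count of Proposition~\ref{prop:degrees} to rule out further flexes, and then to read (iii) off the factorisation of $L_+$ into six lines.

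\textbf{Smoothness.} For (i) I would solve $\nabla F=0$. Here $F_X=2X\bigl(2X^2+3Y^2+3Z^2\bigr)$, and $F_Y,F_Z$ are obtained by permuting the variables. I would split according to how many coordinates vanish.
\begin{itemize}[leftmargin=2em]
\item If $X=0$, then $F_Y=F_Z=0$ forces either $Y=0$ or $2Y^2+3Z^2=0$, and likewise with $Y,Z$ exchanged. Both alternatives quickly force $Y=Z=0$.
\item If $XYZ\neq0$, the three brackets vanish. This is a linear system in $X^2,Y^2,Z^2$ with matrix $\begin{pmatrix}2&3&3\\3&2&3\\3&3&2\end{pmatrix}$, of determinant $28\neq0$, so again there is no solution.
\end{itemize}
Hence $\KK$ is smooth. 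The same conclusion can be confirmed in \sing{groups.sing} or \sing{kk.sing}.

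\textbf{Hyperflexes.} For (ii) I would first check that the twelve listed points lie on $\KK$. For instance $(1:i:1)$ gives $3+3(-1-1+1)=0$, and the remaining points follow from the $G_{24}$-symmetry together with $X\mapsto iX$-type substitutions. Next I would compute the tangent line at $(1:i:1)$. From $\nabla F(1:i:1)=(4,8i,4)$ it is $T=\V(X+2iY+Z)$. Substituting $X=-2iY-Z$ into $F$, I would check that $F|_T$ is a nonzero constant times $(Y-iZ)^4$, which makes $(1:i:1)$ a hyperflex. Applying the signed permutations of $G_{24}$, which preserve $\KK$ and map hyperflexes to hyperflexes, transports this to all twelve points; one checks the orbit has size twelve. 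Each hyperflex has $w_1=2$ by Lemma~\ref{lem:hyperflex}. Since $12\cdot2=24=\deg\W_1(\KK)$ by Proposition~\ref{prop:degrees}, there are no further flexes, so the flexes are exactly these twelve hyperflexes.

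\textbf{Lines and $\Omega_1$.} For (iii), on $X=iY$ we have $X^2=-Y^2$, and $F$ restricts to $2Y^4+Z^4+3(-Y^4)=Z^4-Y^4$. Its four roots give the points $(i\varepsilon:\varepsilon:1)$ with $\varepsilon^4=1$. Up to scaling these are four of the listed hyperflexes: $(\pm1:\pm i:1)$ and $(\pm i:\pm1:1)$ with matching signs. Symmetry handles the other five lines. Counting incidences, $6\cdot4=24=2\cdot12$, and checking that no hyperflex lies on three of the lines, each hyperflex lies on exactly two.

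The polynomial $L_+$ is the product of these six lines, since $X^2+Y^2=(X-iY)(X+iY)$ and similarly for the other two factors. Each line meets $\KK$ transversally in four distinct points, so $\divisor(L_+|_{\KK})=2H$, where $H$ is the divisor of the twelve hyperflexes. By Lemma~\ref{lem:hyperflex}, $\divisor(\Hess|_{\KK})=\W_1(\KK)=2H$. The two forms are sextics with the same divisor, so projective normality gives $\Omega_1\equiv c\,L_+$ as in the proof of Theorem~\ref{thm:fermat}(a).

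\textbf{Non-equivalence with $\Fermat$ (main obstacle).} The idea is to compare collinear quadruples of hyperflexes. On $\KK$ every hyperflex lies on two lines containing four hyperflexes. On $\Fermat$ I would enumerate all lines through at least four of the twelve hyperflexes $(0:\zeta:1),(\zeta:0:1),(\zeta:1:0)$, with $\zeta^4=-1$. The aim is to show these are only the three coordinate lines, so that each hyperflex lies on exactly one such line.

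Two points with the same vanishing coordinate span that coordinate line. A line through points with different vanishing coordinates, say $(0:\zeta:1)$ and $(\zeta':0:1)$, is $\zeta X+\zeta'Y=\zeta\zeta' Z$. I would check by hand, or in \sing{groups.sing}, that it meets the other hyperflexes in at most one further point. A projective equivalence would map hyperflexes to hyperflexes and preserve collinearity, so this configuration invariant distinguishes the two curves. This finite check is the step I expect to require the most care, since it is a case enumeration rather than a structural argument.
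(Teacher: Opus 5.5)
Your proof is correct and reaches all three parts, but in (i) and (ii) it takes a somewhat different route from the paper.

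\textbf{Part (i).} The paper only records the computational certificate $(X,Y,Z)^7\subseteq(F_X,F_Y,F_Z)$. Your case split on vanishing coordinates is a complete hand proof. One slip: the determinant of your $3\times3$ matrix (diagonal $2$, off-diagonal $3$) is $8$, not $28$. It is still non-zero, so the argument stands.

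\textbf{Part (ii).} The paper computes the Hessian $216h$ and a resultant to locate the flexes, then checks one hyperflex and counts. You start from the given list and show it is the $G_{24}$-orbit of the hyperflex $(1:i:1)$. Indeed, the signed permutations send $(1:i:1)$ exactly to $(\pm1:\pm i:1)$, $(\pm i:\pm1:1)$ and $(\pm i:\pm i:1)$. The Pl\"ucker count $12\cdot2=24$ then excludes every other flex. This makes the Hessian and resultant computation unnecessary, which is a real simplification. The paper's route has the advantage of \emph{finding} the flexes rather than verifying a list supplied in advance.

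Drop the remark about ``$X\mapsto iX$-type substitutions''. That map changes the sign of $X^2Y^2+Z^2X^2$, so it is not a symmetry of $\KK$. You do not need it, since the orbit already gives all twelve points. Also, the points on $X=iY$ are $(i:1:1)$, $(-i:-1:1)$, $(-1:i:1)$ and $(1:-i:1)$, so ``matching signs'' is not quite accurate for the last two.

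\textbf{Part (iii) and $\Omega_1$.} These agree with the paper: the restriction $F|_{X=iY}=Z^4-Y^4$, then $\divisor(L_+|_{\KK})=2H=\W_1(\KK)$ and projective normality. Your incidence count ($24=2\cdot12$, and no point lies on three of the lines) is a clean way to get ``exactly two''.

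\textbf{Non-equivalence with $\Fermat$.} The step you flag as the main obstacle is immediate, exactly as in the paper. A line other than a coordinate line meets each of $X=0$, $Y=0$, $Z=0$ in a single point. It therefore contains at most one hyperflex of $\Fermat$ from each coordinate line, hence at most three. So on $\Fermat$ only the three coordinate lines carry four hyperflexes, whereas $\KK$ has six such lines. No enumeration of lines through pairs of hyperflexes is needed.
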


\begin{proof}
(i) The Jacobian ideal $(F_X,F_Y,F_Z)$ contains $(X,Y,Z)^7$, so the partial derivatives have no common zero.

(ii) The Hessian of $\KK$ is $216\,h$ with
$h=2\sum X^6+3\sum_{\mathrm{sym}}X^4Y^2+8X^2Y^2Z^2$, and
\[ \operatorname{Res}_y\bigl(f,h(x,y,1)\bigr)=c\,(x-1)^4(x+1)^4(x^2+1)^8,\]
while $\KK$ and $\V(h)$ do not meet on $Z=0$. Solving gives the twelve listed points, each with intersection multiplicity $2$. They are hyperflexes: at $(1:i:1)$ the tangent line is $X+2iY+Z=0$, and on it $F=5(Y-iZ)^4$; the others follow by symmetry. As
$2\cdot12=24$, there are no other flexes, see also \cite{Edge}.

(iii) On $X=iY$ one has $F=Z^4-X^4$, giving $(1:-i:\varepsilon)$, $\varepsilon^4=1$, which are four hyperflexes, and the other lines follow by symmetry. On the Fermat quartic the twelve hyperflexes lie by fours on the three coordinate lines, and any other line contains at most one hyperflex from each coordinate line, hence at most three. Since projective equivalences preserve hyperflexes and collinearity, the claim follows. Finally $\divisor(L_+|_{\KK})=2H=\W_1(\KK)$, so $\Omega_1\equiv cL_+$.
\end{proof}

So on $\KK$, unlike on $\Fermat$, the hyperflexes are not forced by homologies: each
lies on the axis of an involution (see $\divisor(L_-)$ below), but by
Theorem~\ref{thm:homologies}(a) this only forces a hyperflex or a sextactic point. Write $H$ for the divisor of the twelve hyperflexes and put
\[ L_-=(X^2-Y^2)(Y^2-Z^2)(Z^2-X^2),\qquad
  S_6=45\,(X^2+Y^2+Z^2)^3-11\,X^2Y^2Z^2 .\]
On $\KK$:
\begin{itemize}[leftmargin=2em]
\item $\divisor(L_+)=2H$, by Proposition~\ref{prop:kk-flexes}(iii);
\item $\divisor(XYZ)=A$, the $12$ points on the coordinate axes; e.g.\ $X=0$ gives
  $Y^4+3Y^2Z^2+Z^4=0$;
\item $\divisor(L_-)=H+B$: on $X=Y$ one has $F=(5X^2+Z^2)(X^2+Z^2)$, so each of the
  six lines $X=\pm Y,\dots$ contains two hyperflexes and two further points such as  $(1:1:\pm i\sqrt5)$; $B$ is the divisor of these $12$ further points;
\item $\divisor(S_6)=S'$, a free $G_{24}$-orbit of $24$ points.
\end{itemize}
The points of $A\cup B$ are exactly the non-hyperflex points on the axes of the nine involutions. The sextic $S_6$ is congruent modulo $F$ to
$Q_6=90X^2Y^4+124X^2Y^2Z^2+90X^2Z^4+45Y^6+135Y^4Z^2+135Y^2Z^4+45Z^6$, and it does not anish at $H\cup A\cup B$.

\begin{theorem}[Komiya--Kuribayashi quartic]\label{thm:kk}
Set $U=X^2+Y^2+Z^2$ and $V=X^2Y^2Z^2$, and let
\[
\begin{aligned}
P_9(t)={}&99421875-682428125t-2040212500t^2
   +16403537500t^3\\
 &-83782723750t^4+94921278250t^5+4794299900t^6\\
 &+18371470540t^7-2821591285t^8-163782773t^9.
\end{aligned}
\]
With $c_j=[t^j]P_9(t)$, define the $G_{24}$-invariant form
\[
  \Psi_{54}=\sum_{j=0}^9 c_j U^{27-3j}V^j.
\]
Modulo $F$ and up to non-zero scalars,
\[
  \Omega_2\equiv L_+^2\,L_-\,XYZ\,S_6,\qquad
  \Omega_3\equiv L_+^2\,L_-\,XYZ\,\Psi_{54}.
\]
More precisely:
\begin{enumerate}[label=\rm(\alph*)]
\item $\W_2(\KK)=5H+A+B+S'$; the $48$ sextactic points are $A\cup B\cup S'$, each of $2$-weight $1$.
\item $\W_3(\KK)=5H+A+B+D$, where $D=\KK\cap\V(\Psi_{54})$ consists of $216$ distinct dectactic points of weight $1$, forming nine free $G_{24}$-orbits of $24$ points each. In particular, the $24$ sextactic points of $S'$ are \emph{not} $3$-Weierstrass points. Thus
\end{enumerate}
\[
\begin{array}{lccc}
\toprule
\textup{type} & \textup{number} & \textup{vanishing sequence of }V_3 & 3\textup{-weight}\\
\midrule
\textup{hyperflexes} & 12 & 0,1,2,3,4,5,6,8,9,12 & 5\\
\textup{sextactic points on axes }(A\cup B) & 24 & 0,1,\ldots,8,10 & 1\\
\textup{sextactic points on }S_6=0 & 24 & 0,1,\ldots,9 & 0\\
\textup{dectactic points} & 216 & 0,1,\ldots,8,10 & 1\\
\bottomrule
\end{array}
\]
and $\deg\W_3(\KK)=60+24+216=300$, with $252$ distinct $3$-Weierstrass points.
\end{theorem}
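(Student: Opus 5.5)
The plan follows the proof of Theorem~\ref{thm:fermat}: first pin down as much of $\W_2(\KK)$ and $\W_3(\KK)$ as symmetry forces, then use degree counts and a single computer verification for the remaining factor. By Lemma~\ref{lem:hyperflex} and Proposition~\ref{prop:kk-flexes}, $\W_n(\KK)\ge 5H$ for $n=2,3$. The points of $A\cup B$ lie on axes of the nine involutions and are not hyperflexes. Theorem~\ref{thm:homologies}(a) therefore makes them non-flex sextactic points with $w_2,w_3\in\{1,3\}$, so $\W_n(\KK)\ge 5H+A+B$ for $n=2,3$. On $\KK$ we have
\[
\divisor(L_+^2L_-XYZ)=4H+(H+B)+A=5H+A+B.
\]
This divisor has degree $84=4\cdot 21$, consistent with $\deg(L_+^2L_-XYZ)=12+6+3=21$. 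Projective normality then gives forms $\Pi_6$ of degree $27-21=6$ and $\Pi_{54}$ of degree $75-21=54$ with
\[
\Omega_2\equiv L_+^2L_-XYZ\,\Pi_6,\qquad \Omega_3\equiv L_+^2L_-XYZ\,\Pi_{54}.
\]

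For (a), I will verify with \sing{kk.sing} (the Hamiltonian formula, Proposition~\ref{prop:hamilton}) that $\Pi_6\equiv S_6$ up to a scalar modulo $F$. One can also argue more conceptually. By Corollary~\ref{cor:covariance}, $\Pi_6$ is $G_{24}$-semi-invariant modulo $F$. The invariant sextics modulo $F$ are spanned by $U^3$, $V$ and $U\cdot(\sum X^4)$, and $\sum X^4\equiv -3\sum X^2Y^2 = -\tfrac32(U^2-\sum X^4)$, which pins this down. Computing one Wronskian value then fixes the ratio $45:-11$. Next I check that $S'=\divisor(S_6)$ is a free orbit of $24$ reduced points disjoint from $H\cup A\cup B$, using resultants. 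Since $\deg\W_2=108=60+12+12+24$, every point of $A\cup B\cup S'$ has $w_2=1$. Such a point is then $1$-sextactic, and for $A\cup B$ Theorem~\ref{thm:homologies}(a) upgrades $w_3\in\{1,3\}$.

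For (b), the script computes $\Omega_3$ and divides by $L_+^2L_-XYZ$ modulo $F$, obtaining $\Pi_{54}$. The step I expect to be the main obstacle is identifying $\Pi_{54}\equiv\Psi_{54}$. I would reduce modulo $F$ to a normal form and compare, working over $\QQ$ if feasible. The computation is costly, a $10\times10$ determinant of degree $75$ forms, so if needed I would work modulo several large primes and then reconstruct rationally. To reduce the comparison to the one-variable polynomial $P_9$, I would use that invariants of degree $54$ modulo $F$ are polynomials in $U,V$ and $\sum X^4$, with the last eliminated via $F$.

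Then I verify the remaining claims about $D=\KK\cap\V(\Psi_{54})$. First, $P_9$ is square-free with roots $t=V/U^3$ avoiding the values taken on $H\cup A\cup B\cup S'$ (for instance $t=0$ on $A$ and $45/11$-type values on $S'$). Second, each root yields $24$ distinct points; the identity orbit stabilisers are trivial because non-free orbits lie on the axes or on $A$. Third, there are no points of $D$ at infinity beyond those counted. Together these give $216$ reduced points disjoint from $H\cup A\cup B\cup S'$. The degree count $60+24+216=300$ forces $w_3=1$ on $A\cup B$ and on $D$, and $w_3=0$ on $S'$. The points of $D$ are not flexes or sextactic by (a), hence dectactic. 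The vanishing sequences in the table follow from Lemma~\ref{lem:hyperflex} and from the fact that weight $1$ forces $\{0,\ldots,8,10\}$, while weight $0$ gives $\{0,\ldots,9\}$.
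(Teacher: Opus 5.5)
Your overall route is the paper's. You get the lower bound $\W_n(\KK)\ge 5H+A+B$ from Lemma~\ref{lem:hyperflex} and Theorem~\ref{thm:homologies}. Projective normality gives residual forms of degrees $6$ and $54$, which you identify by the Hamiltonian determinant, and you reduce invariant classes to polynomials in $U,V$ via $\sum X^4\equiv 3U^2$. You then analyse $D$ through the values of $t=V/U^3$. Your resultant check for $S'$ replaces the paper's stabiliser argument, which is fine.

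The step that fails as written is your freeness argument for $D$: it is not true that the non-free orbits lie on the axes. The eight points of $\KK$ fixed by the elements of order three, such as $(1:\omega:\omega^2)$, lie on none of the nine axes. They form the orbit where $U=0$, i.e.\ $t=\infty$. You must show that $\Psi_{54}$ does not vanish there. Since $U$ and $V$ have no common zero on $\KK$, at these points $\Psi_{54}=c_9V^9$, so the condition is exactly $c_9\neq0$; this is the paper's ``$P_9$ is non-zero at $\infty$''. Your item about ``points at infinity'' covers this only if you meant $t=\infty$, and then you should say so.

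The paper packages this by showing that $\rho_{\KK}=(V:U^3)$ is the quotient map by $G_{24}$: it is base-point-free, of degree $24$ and invariant. By Riemann--Hurwitz its branch values are exactly $0,-1,5/27,\infty$. Your route also works once you add this orbit and note two facts. First, $\divisor(V-t_iU^3)$ is a $G_{24}$-invariant divisor of degree $24$ supported on free orbits, hence a single free orbit with multiplicity one. Second, distinct roots $t_i$ give disjoint fibres because $U,V$ have no common zero on $\KK$.

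There are two smaller points. First, the reduction to $P_9$ needs $\Pi_6$ and $\Pi_{54}$ to be genuinely invariant, not merely semi-invariant as you state. A sign-semi-invariant class of even degree is $L_-$ times an invariant, and a non-zero such class is never congruent to a polynomial in $U,V$. The paper gets invariance by using determinant-one signed permutations in Corollary~\ref{cor:covariance}. It then observes that $L_+^2L_-XYZ$ is itself invariant, because the signs acquired by $L_-$ and $XYZ$ cancel.

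Second, multimodular computation with rational reconstruction only produces a candidate for $\Psi_{54}$. To turn $\Pi_{54}\equiv\Psi_{54}$ into a proof you need either the exact computation over $\QQ$, which the paper performs and which yields the scalar $-2^{63}3^{14}5^{15}7^3$, or an exact a-posteriori verification.
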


\begin{proof}
On $\KK$, $\divisor(L_+^2L_-XYZ)=4H+(H+B)+A=5H+A+B$, and by Lemma~\ref{lem:hyperflex}
and Theorem~\ref{thm:homologies} we have $\W_n(\KK)\ge 5H+A+B$ for $n=2,3$. The residual divisors $\W_2(\KK)-5H-A-B$ and $\W_3(\KK)-5H-A-B$ are effective, of degrees $24$ and $216$, and lie in $|\OO_{\KK}(6)|$ and $|\OO_{\KK}(54)|$. Hence they are cut out by forms of degrees $6$ and $54$, which proves the shape of $\Omega_2$ and $\Omega_3$.
We take $G_{24}$ to consist of determinant-one signed permutation matrices, so Corollary~\ref{cor:covariance} implies that $\Omega_3$ is $G_{24}$-invariant modulo $F$. The product $L_+^2L_-XYZ$ is invariant as well: under a signed permutation of determinant one, the signs acquired by $L_-$ and by $XYZ$ cancel. Thus the residual class $\Psi_{54}$ is invariant modulo $F$.
Averaging over $G_{24}$ gives an invariant polynomial representative of this class. In even degree every invariant form is symmetric in $X^2,Y^2,Z^2$: the even sign changes force each exponent to be even, and the group acts as the full symmetric group on the squared coordinates. Writing
$T=X^2Y^2+Y^2Z^2+Z^2X^2$, we have $F=U^2+T$. Hence every invariant class of degree $54$ admits a representative $\sum_{j=0}^{9} c_j U^{27-3j}V^j$, as asserted.

For (a), no point of $\KK\cap\V(S_6)$ has a non trivial stabiliser in $G_{24}$: every non-trivial subgroup of $S_4$ contains an element of order $2$ or $3$; a point of $\KK$ fixed by an involution lies on its axis (the centres are not on $\KK$, by
Proposition~\ref{prop:homnormal}), where $S_6\neq0$; and the fixed points on $\KK$ of the elements of order $3$, such as $(1:\omega:\omega^2)$, satisfy $X^2+Y^2+Z^2=0$, so that $S_6=-11X^2Y^2Z^2\neq0$ there. Hence $\KK\cap\V(S_6)$, which has degree $24$, is a single free orbit $S'$ of $24$ distinct points. The identity $\Omega_2\equiv cL_+^2L_-XYZS_6$ is verified in \sing{kk.sing}.

For (b), an exact calculation using Proposition~\ref{prop:hamilton}, with the ordering of cubic monomials fixed in Section~\ref{sec:wronskian}, gives
\[
  \det(\delta^i b_j)_{0\le i,j\le9}
  \equiv -2^{63}3^{14}5^{15}7^3\,
  L_+^2L_-XYZ\,\Psi_{54}\pmod F.
\]
The script \sing{kk.sing} obtains $\Psi_{54}$ directly, as the quotient of $\Omega_3$ by $L_+^2L_-XYZ$ modulo $F$, and checks the conclusions of the following argument (that $\V(F,\Psi_{54})$ consists of $216$ distinct points disjoint from $H\cup A\cup B\cup S'$, and the $G_{24}$-invariance of $\Psi_{54}$) without using the explicit form; the explicit form can be confirmed by comparing it with this quotient.

The morphism
\[ \rho_{\KK}=(V:U^3):\KK\longrightarrow\PP^1 \]
is base-point-free (the equations $U=V=0$ have no solution on $\KK$) and has degree $24$. It is $G_{24}$-invariant, hence it is the quotient map by $G_{24}$, and its fibres are precisely the $G_{24}$-orbits. Its branch values are $0,-1,5/27,\infty$, corresponding to the orbits $A,H,B$ and the orbit of points fixed by elements of order three, respectively. These exhaust the ramification by Riemann--Hurwitz, since three orbits of size $12$ with ramification index $2$ and one orbit of size $8$
with ramification index $3$ contribute $3\cdot12+8\cdot2=52=2g(\KK)-2+48$. The polynomial $P_9$ is square-free and is non-zero at all four branch values (including $\infty$, since its leading coefficient is non-zero). Moreover $P_9(45/11)\neq0$, and $45/11$ is the value of $\rho_{\KK}$ on $S'$, since $S_6=45U^3-11V$.
Thus $D=\KK\cap\V(\Psi_{54})$ is the union of nine distinct free $G_{24}$-orbits, each of $24$ points, disjoint from $H\cup A\cup B\cup S'$. All points of $D$ are therefore neither flexes nor sextactic points, and they have $3$-weight one.
\end{proof}

\begin{remark}
The numerical shape $12\cdot 5+240\cdot 1$ of $\W_3$ is the same for $\Fermat$ and $\KK$, but the decomposition is not: on $\Fermat$ every sextactic point is a $3$-Weierstrass point, because all of them lie on axes of involutions, whereas on $\KK$ only the $24$ sextactic points on axes are. This gives an explicit symmetric example of sextactic points that are not $3$-Weierstrass points (compare
Remark~\ref{rem:general} and the Picard quartic in Section~\ref{sec:picard}). We have not expanded $\Psi_{54}$ in monomials; it is produced by \sing{kk.sing}. For maximal tangency lines on quartics see also \cite{MertaZielinski}.
\end{remark}

\section{The Klein quartic}\label{sec:klein}

\subsection{Klein's invariants}

Let
\[\Klein=\V(\Phi_4)\subset\PP^2,\qquad \Phi_4=X^3Y+Y^3Z+Z^3X .\]
Its automorphism group $G=\Aut(\Klein)\cong\PSL_2(\FF_7)$ has order $168$ and is the image of a subgroup of $\SL_3(\CC)$ isomorphic to $G$ which fixes $\Phi_4$, so $G$ acts
linearly on $\CC[X,Y,Z]$, see \cite{Klein,Elkies}. Following Klein, put
\[ \Phi_6=-\tfrac{1}{54}\Hess(\Phi_4)=X^5Z+XY^5+YZ^5-5X^2Y^2Z^2, \]
\[  \Phi_{14}=\frac{1}{9}\det
  \begin{pmatrix}
  \Phi_{4,XX}&\Phi_{4,XY}&\Phi_{4,XZ}&\Phi_{6,X}\\
  \Phi_{4,YX}&\Phi_{4,YY}&\Phi_{4,YZ}&\Phi_{6,Y}\\
  \Phi_{4,ZX}&\Phi_{4,ZY}&\Phi_{4,ZZ}&\Phi_{6,Z}\\
  \Phi_{6,X}&\Phi_{6,Y}&\Phi_{6,Z}&0
  \end{pmatrix},
  \qquad
  \Phi_{21}=\frac{1}{14}\det
  \begin{pmatrix}
  \Phi_{4,X}&\Phi_{4,Y}&\Phi_{4,Z}\\
  \Phi_{6,X}&\Phi_{6,Y}&\Phi_{6,Z}\\
  \Phi_{14,X}&\Phi_{14,Y}&\Phi_{14,Z}
  \end{pmatrix}. \]
Explicitly,
\begin{align*}
\Phi_{14}={}&X^{14}-34X^{11}Y^2Z 250X^9YZ^4+375X^8Y^4Z^2+18X^7Y^7+18X^7Z^7
-126X^6Y^3Z^5\\
&-126X^5Y^6Z^3-250X^4Y^9Z+375X^4Y^2Z^8-126X^3Y^5Z^6+375X^2Y^8Z^4\\
&-34X^2YZ^{11}-34XY^{11}Z^2-250XY^4Z^9+Y^{14}+18Y^7Z^7+Z^{14},
\end{align*}
and $\Phi_{21}$ factors over $\QQ$ as $\Phi_{21}=G_3\,G_6^{(1)}G_6^{(2)}G_6^{(3)}$ with
\begin{align*}
G_3={}&X^3-X^2Y-2X^2Z-2XY^2+6XYZ-XZ^2+Y^3-Y^2Z-2YZ^2+Z^3,\\[4pt]
G_6^{(1)}={}&X^6-2X^5Y-4X^5Z+4X^4Y^2+2X^4YZ+9X^4Z^2-8X^3Y^3-6X^3Y^2Z+2X^3YZ^2\\
&-8X^3Z^3+9X^2Y^4+2X^2Y^3Z+13X^2Y^2Z^2-6X^2YZ^3+4X^2Z^4-4XY^5+2XY^4Z\\
&-6XY^3Z^2+2XY^2Z^3+2XYZ^4-2XZ^5+Y^6-2Y^5Z+4Y^4Z^2-8Y^3Z^3+9Y^2Z^4\\
&-4YZ^5+Z^6,\\[4pt]
G_6^{(2)}={}&X^6-2X^5Y+3X^5Z+4X^4Y^2-5X^4YZ+2X^4Z^2-X^3Y^3+8X^3Y^2Z+9X^3YZ^2\\
&-X^3Z^3+2X^2Y^4+9X^2Y^3Z-X^2Y^2Z^2+8X^2YZ^3+4X^2Z^4+3XY^5-5XY^4Z\\
&+8XY^3Z^2+9XY^2Z^3-5XYZ^4-2XZ^5+Y^6-2Y^5Z+4Y^4Z^2-Y^3Z^3+2Y^2Z^4\\
&+3YZ^5+Z^6,\\[4pt]
G_6^{(3)}={}&X^6+5X^5Y+3X^5Z+11X^4Y^2+16X^4YZ+9X^4Z^2+13X^3Y^3+36X^3Y^2Z\\
&+37X^3YZ^2+13X^3Z^3+9X^2Y^4+37X^2Y^3Z+55X^2Y^2Z^2+36X^2YZ^3+11X^2Z^4\\
&+3XY^5+16XY^4Z+36XY^3Z^2+37XY^2Z^3+16XYZ^4+5XZ^5+Y^6+5Y^5Z\\
&+11Y^4Z^2+13Y^3Z^3+9Y^2Z^4+3YZ^5+Z^6.
\end{align*}
Geometrically, $\Phi_{21}=0$ is the union of the $21$ axes of the involutions of $G$. These identities, the irreducibility of the four factors over $\QQ$, and the relation
\begin{equation}\label{eq:klein-relation}
  \Phi_{21}^2\equiv\Phi_{14}^3-1728\,\Phi_6^7\pmod{\Phi_4},
\end{equation}
are checked in \sing{klein.sing}. We use the following classical facts \cite{Klein,Elkies,Dolgachev}; the transversality statements in \ref{K:orbits} are also checked in \sing{klein.sing}.
\begin{enumerate}[label=(K\arabic*)]
\item\label{K:ring} $\CC[X,Y,Z]^G=\CC[\Phi_4,\Phi_6,\Phi_{14}]\oplus
  \Phi_{21}\,\CC[\Phi_4,\Phi_6,\Phi_{14}]$;
  equivalently its Molien series is
  $(1+t^{21})/\bigl((1-t^4)(1-t^6)(1-t^{14})\bigr)$.
\item\label{K:orbits} The $G$-orbits on $\Klein$ with non-trivial stabiliser are $O_{24}$ (stabiliser of order $7$: the $24$ flexes), $O_{56}$ (order $3$: the contact points of the $28$ bitangents) and $O_{84}$ (order $2$: the fixed points of the $21$ involutions); they are cut out transversally by $\Phi_6$, $\Phi_{14}$ and $\Phi_{21}$,
  i.e.\ $\divisor(\Phi_6|_\Klein)=O_{24}$, $\divisor(\Phi_{14}|_\Klein)=O_{56}$ and
  $\divisor(\Phi_{21}|_\Klein)=O_{84}$.
\end{enumerate}

\begin{lemma}\label{lem:quotient}
The map $\pi (\Phi_{14}^3:\Phi_6^7)|_{\Klein}:\Klein\to\PP^1$ is a morphism of degree $168$ whose fibres are exactly the $G$-orbits. Its fibres over $\infty$, $0$ and $1728$ are $7\,O_{24}$, $3\,O_{56}$ and $2\,O_{84}$, and every other fibre is a free orbit of $168$ distinct points.
\end{lemma}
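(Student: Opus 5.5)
First I will check that $\pi$ is a morphism, that is, that $\Phi_{14}^3$ and $\Phi_6^7$ have no common zero on $\Klein$. By \ref{K:orbits} the zeros of $\Phi_6$ on $\Klein$ form $O_{24}$ and those of $\Phi_{14}$ form $O_{56}$. These orbits are disjoint, since their stabilisers have orders $7$ and $3$. Hence $\pi$ is base-point-free. Its degree is $\deg(\Phi_{14}^3|_{\Klein})=4\cdot 42=168$.

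Both forms are $G$-invariant because $G\subset\SL_3$ fixes $\Phi_4$, $\Phi_6$ and $\Phi_{14}$. So $\pi$ is constant on $G$-orbits and factors through the quotient $\Klein\to\Klein/G$. This quotient has degree $|G|=168$, because $G$ acts faithfully on $\Klein$. The induced map $\Klein/G\to\PP^1$ therefore has degree $1$. Since $\Klein/G$ is a smooth curve and $\PP^1$ is normal, the induced map is an isomorphism. Thus the fibres of $\pi$ are exactly the $G$-orbits, and every orbit $O$ appears as the fibre $(168/|O|)\,O$.

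Next I read off the special fibres.
\begin{itemize}
\item Over $\infty$ the fibre is $\divisor(\Phi_6^7)=7\,O_{24}$ by \ref{K:orbits}.
\item Over $0$ the fibre is $\divisor(\Phi_{14}^3)=3\,O_{56}$.
\item Over $1728$ the fibre is the divisor of $\Phi_{14}^3-1728\Phi_6^7$. By \eqref{eq:klein-relation} this equals $\divisor(\Phi_{21}^2)=2\,O_{84}$.
\end{itemize}

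It remains to show that every other fibre is a free orbit. Here I use Riemann--Hurwitz. The three fibres above already contribute ramification $24\cdot 6+56\cdot 2+84\cdot 1=340=2\cdot 3-2+2\cdot 168$, which is the total ramification of a degree-$168$ map from a genus-$3$ curve to $\PP^1$. Hence $\pi$ is unramified elsewhere, and all other fibres consist of $168$ distinct points, i.e.\ they are free orbits. Alternatively, this follows from \ref{K:orbits}, which lists all orbits with non-trivial stabiliser.

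I expect no step to be a serious obstacle. The only point requiring care is the identification of the fibre over $1728$ with $2\,O_{84}$. This relies on the congruence \eqref{eq:klein-relation} holding modulo $\Phi_4$ with exactly the normalisations of $\Phi_6$, $\Phi_{14}$ and $\Phi_{21}$ chosen in the paper, and on $\Phi_{21}$ cutting $O_{84}$ transversally, both of which are taken from the verified facts above.
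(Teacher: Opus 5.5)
Your proposal is correct and follows the paper's proof essentially step by step: base-point-freeness from $O_{24}\cap O_{56}=\emptyset$, factorisation through $\Klein\to\Klein/G$ with an induced map of degree one, the fibre over $1728$ via \eqref{eq:klein-relation}, and Riemann--Hurwitz ($24\cdot6+56\cdot2+84=340$) to rule out further ramification. The differences are only cosmetic: you compute the degree from $\deg\Phi_{14}^3$ rather than from $7\,O_{24}$, and you spell out why the degree-one map $\Klein/G\to\PP^1$ is an isomorphism.
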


\begin{proof}
$\Phi_6$ and $\Phi_{14}$ have no common zero on $\Klein$ since $O_{24}\cap O_{56}=\emptyset$, so $\pi$ is a morphism and its degree is $\deg(7\,O_{24})=168$. It is $G$-invariant, so it
factors through $\Klein\to\Klein/G\cong\PP^1$, which has degree $168$. The induced map $\Klein/G\to\PP^1$ has degree $1$, hence the fibres of $\pi$ are the orbits. The fibre over $1728$ is $\divisor(\Phi_{14}^3-1728\Phi_6^7)=\divisor(\Phi_{21}^2)=2\,O_{84}$ by
\eqref{eq:klein-relation}. By Riemann--Hurwitz,
$4=168\cdot(-2)+24\cdot6+56\cdot2+84\cdot1$, so $\pi$ has no further ramification.
\end{proof}

\begin{lemma}
\label{lem:invariance}
For every $n$ the restriction $\Omega_n|_{\Klein}$ is $G$-invariant. Moreover, $H^0(\Klein,\OO_{\Klein}(d))^G$ is spanned by the restrictions of
\[
  \begin{gathered}
  \Phi_6\ (d=6),\qquad \Phi_6\Phi_{21}\ (d=27),\qquad
  \Phi_6^{9}\Phi_{21},\ \Phi_6^{2}\Phi_{14}^{3}\Phi_{21}\ (d=75),\\
  \Phi_6^{21-7c}\Phi_{14}^{3c}\Phi_{21},\ 0\le c\le 3\ (d=147).
  \end{gathered}
\]
\end{lemma}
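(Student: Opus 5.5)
The plan has two parts. First we show that $\Omega_n|_{\Klein}$ is $G$-invariant, using covariance. Then we compute the invariant sections in the four relevant degrees from the structure of the invariant ring.

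\emph{Invariance.} The group $G$ is realised inside $\SL_3(\CC)$ and fixes $\Phi_4$. For $n\le3$ Corollary~\ref{cor:covariance} gives $\Omega_n\circ g\equiv\Omega_n\pmod{\Phi_4}$ directly. For $n=4$, and in general, we go back to the proof of the corollary. By the chain rule, $J_{L\circ g}(G'\circ g)=\det(g)\,(J_LG')\circ g$ for $F\circ g=F$. Any basis of $V_n$ works in Proposition~\ref{prop:projective}: changing the basis multiplies the determinant by a constant, and elements of $F\cdot S_{n-4}$ contribute nothing modulo $F$ because $J_L$ preserves $(F)$. So we may use the basis $M_j\circ g$, at the cost of a scalar $\det A$. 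This gives $\Omega_n\circ g\equiv\chi(g)\,\Omega_n\pmod{\Phi_4}$ for a character $\chi:G\to\CC^*$. Since $G\cong\PSL_2(\FF_7)$ is simple and non-abelian, it is perfect, so $\chi$ is trivial.

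\emph{Invariant sections.} Restriction $S_d\to H^0(\Klein,\OO_{\Klein}(d))$ is surjective and $G$-equivariant. Because $G$ is finite, taking invariants (the Reynolds operator) is exact, so $H^0(\Klein,\OO(d))^G$ is the image of $S_d^G$. By \ref{K:ring} the ring $S^G/(\Phi_4)$ is spanned by the monomials $\Phi_6^a\Phi_{14}^b$ and $\Phi_6^a\Phi_{14}^b\Phi_{21}$.

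We then enumerate these monomials in each degree. In degree $6$ the only one is $\Phi_6$. In degree $27=21+6$ it is $\Phi_6\Phi_{21}$; the even part has none, since $6a+14b=27$ has no solution. In degree $75$ the odd part needs $6a+14b=54$, giving $(a,b)=(9,0),(2,3)$; the even part needs $6a+14b=75$, which is impossible by parity. In degree $147$ the odd part needs $6a+14b=126$, i.e.\ $3a+7b=63$, so $b=3c$ with $0\le c\le3$ and $a=21-7c$; the even part again has no solutions by parity. This gives exactly the listed spanning sets.

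The main subtlety is the invariance step for $n=4$, which lies outside Corollary~\ref{cor:covariance}. There we must check two things: that the scalar produced by the change of basis $M_j\mapsto M_j\circ g$ modulo $F$ is well defined, and that it is a character. Perfectness of $G$ then finishes the argument, so no explicit computation of the character is needed.
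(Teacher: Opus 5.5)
Your proposal is correct and follows essentially the same route as the paper. For $n\le 3$ you use Corollary~\ref{cor:covariance}. For larger $n$ you use a character argument that is killed by simplicity of $G$, which the paper also gives as its alternative; its primary remark is that $g$ acts on $V_4=S_4/\Phi_4S_0$ with determinant $1$. You then use surjectivity of restriction on invariants via averaging, and the same enumeration of the monomials $\Phi_6^a\Phi_{14}^b\Phi_{21}^e$, $e\in\{0,1\}$, that comes from \ref{K:ring}.
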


\begin{proof}
$G\subset\SL_3(\CC)$ fixes $\Phi_4$, so $\Omega_n\circ g\equiv\Omega_n$ modulo $\Phi_4$
by Corollary~\ref{cor:covariance} (for $n=4$ the same argument applies, since $g$ acts on $V_4=S_4/\Phi_4S_0$ with determinant $1$; alternatively, $G$ is simple, so every character of $G$ is trivial). Restriction $S_d\to H^0(\Klein,\OO_{\Klein}(d))$ is surjective and equivariant, so by averaging it is surjective on invariants, with kernel $\Phi_4\,S^G_{d-4}$. By \ref{K:ring},
$S^G/\Phi_4S^G\cong\CC[\Phi_6,\Phi_{14}]\oplus\Phi_{21}\CC[\Phi_6,\Phi_{14}]$, so a basis in degree $d$ is given by the monomials $\Phi_6^b\Phi_{14}^c\Phi_{21}^e$ with
$6b+14c+21e=d$ and $e\in\{0,1\}$; for odd $d$ we need $e=1$.
\end{proof}

\subsection{The Weierstrass divisors of the Klein quartic}

\begin{theorem}\label{thm:klein}
Put
\[ \Psi_{42}=3\,\Phi_{14}^3+21962\,\Phi_6^7\ \equiv\ 3\,\Phi_{21}^2+27146\,\Phi_6^7\pmod{\Phi_4}. \]
\begin{enumerate}[label=\rm(\roman*)]
\item $\W_1(\Klein)=O_{24}$: the $24$ flexes are ordinary.
\item $\Omega_2\equiv c\,\Phi_6\Phi_{21}\pmod{\Phi_4}$ and $\W_2(\Klein)=O_{24}+O_{84}$.
  In particular, the sextactic points of $\Klein$ are the $84$ fixed points of the involutions, each of $2$-weight $1$.
\item $\Omega_3\equiv c'\,\Phi_6^2\,\Phi_{21}\,\Psi_{42}\pmod{\Phi_4}$ and
  \[ \W_3(\Klein)=2\,O_{24}+O_{84}+O_{168},\]
  where $O_{168}=\Klein\cap\V(\Psi_{42})=\pi^{-1}\bigl(-\tfrac{21962}{3}\bigr)$ is a free $G$-orbit of $168$ dectactic points.
\end{enumerate}
Here $c,c'\in\CC^*$. Thus
\[
\begin{array}{lccc}
\toprule
\textup{orbit} & \textup{type} & \textup{vanishing sequence of }V_3 & 3\textup{-weight}\\
\midrule
O_{24}=\Klein\cap\V(\Phi_6) & \textup{ordinary flexes} & 0,1,\ldots,7,9,10 & 2\\
O_{84}=\Klein\cap\V(\Phi_{21}) & \textup{sextactic points} & 0,1,\ldots,8,10 & 1\\
O_{168}=\Klein\cap\V(\Psi_{42}) & \textup{dectactic points} & 0,1,\ldots,8,10 & 1\\
O_{56}=\Klein\cap\V(\Phi_{14}) & \textup{not $3$-Weierstrass} & 0,1,\ldots,9 & 0\\
\bottomrule
\end{array}
\]
and $\deg\W_3(\Klein)=48+84+168=300$, with $24+84+168=276$ distinct $3$-Weierstrass
points.
\end{theorem}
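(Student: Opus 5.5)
The plan is to combine invariance (Lemma~\ref{lem:invariance}) with local weight information, so that only a few scalars remain to be fixed by computation. For (i), $\Omega_1$ is the Hessian, and $\Hess=-54\Phi_6$. By \ref{K:orbits}, $\divisor(\Phi_6|_\Klein)=O_{24}$ is reduced. Hence every flex has $w_1=1$, which means every flex is ordinary (a hyperflex would have weight $2$ by Lemma~\ref{lem:hyperflex}).

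For (ii), Lemma~\ref{lem:invariance} says the $G$-invariant space in degree $27$ is spanned by $\Phi_6\Phi_{21}$. Since $\Omega_2|_\Klein$ is invariant and non-zero, $\Omega_2\equiv c\,\Phi_6\Phi_{21}$ with no computation at all. Its divisor is $O_{24}+O_{84}$ by \ref{K:orbits}. The points of $O_{84}$ are not flexes, so they are sextactic of $2$-weight one; every other non-flex has $w_2=0$.

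For (iii), the same lemma gives $\Omega_3\equiv \alpha\,\Phi_6^9\Phi_{21}+\beta\,\Phi_6^2\Phi_{14}^3\Phi_{21}=\Phi_6^2\Phi_{21}(\beta\Phi_{14}^3+\alpha\Phi_6^7)$, so everything reduces to determining the ratio $(\alpha:\beta)$.
\begin{itemize}
\item \textbf{Structural constraint.} Lemma~\ref{lem:flex} shows an ordinary flex has $w_3\in\{0,2,4\}$. The factor $\Phi_6^2$ already gives weight at least $2$ at every flex, so $\alpha\neq 0$ would give weight exactly $2$, and $\alpha=0$ would give $9$, which is impossible. Similarly, $\beta=0$ would put weight $\ge 21$ at $O_{56}$, contradicting Theorem~\ref{thm:KE}, since no point has $3$-weight above $5$. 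Thus $\alpha\beta\neq0$. Consequently $\Psi_{42}$ cuts out the fibre $\pi^{-1}(-\alpha/\beta)$ of Lemma~\ref{lem:quotient}.
\item \textbf{Determining the ratio.} I would compute it by evaluating the Hamiltonian determinant of Proposition~\ref{prop:hamilton} at two convenient points. Alternatively, I would use the power-series recipe of Remark~\ref{rem:recipe} at one non-special point together with the known values of $\Phi_6,\Phi_{14}$ there. The cleaner route is to reduce $\det(\delta^ib_j)$ modulo $\Phi_4$ and divide by $\Phi_6^2\Phi_{21}$ in \sing{klein.sing}, which yields $\Psi_{42}=3\Phi_{14}^3+21962\Phi_6^7$. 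The second expression for $\Psi_{42}$ then follows from \eqref{eq:klein-relation}, since $3\cdot1728+21962=27146$.
\item \textbf{Reading off the orbits.} The value $-21962/3$ differs from $0$, $1728$ and $\infty$, so $O_{168}$ is a free orbit of $168$ distinct points with weight one. These points are neither flexes nor in $O_{84}$, so they are dectactic.
\end{itemize}

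For the table, the flexes have $w_3=2$, and Theorem~\ref{thm:KE} forces their vanishing sequence to be $0,\dots,7,9,10$. Weight-one points have the unique weight-one sequence. The points of $O_{56}$ have weight $0$ because $\Phi_{14}$ and $\Psi_{42}$ have no common zero on $\Klein$, as $\Phi_6\ne0$ there. The main obstacle is the explicit determination of the constant $21962/3$: the structural argument cannot see it, and it requires the heavy degree-$75$ determinant or a careful point evaluation, possibly modulo primes followed by lifting. That the result is not $\Phi_6^2\Phi_{21}^3$ is precisely the check $-\alpha/\beta\neq1728$.
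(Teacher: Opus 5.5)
Your route is the paper's: Lemma~\ref{lem:invariance} reduces $\Omega_3|_{\Klein}$ to the pencil $\Phi_6^2\Phi_{21}(\beta\Phi_{14}^3+\alpha\Phi_6^7)$, the ratio is fixed by explicit computation, and Lemma~\ref{lem:quotient} identifies $\V(\Psi_{42})\cap\Klein$ as a free orbit. Parts (i) and (ii), the treatment of $O_{56}$, the second expression for $\Psi_{42}$ and the vanishing sequences all go as in the paper. On the computation itself: the paper evaluates at the single point $P_0=(2:\theta:1)$ with $\theta^3+8\theta+2=0$. This works only because $\lambda,\mu\in\QQ$, so one identity in the cubic field $\QQ(\theta)$ gives three rational linear equations. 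Your ``one non-special point'' variant over $\CC$ would give one equation in two unknowns. Your two-point route and your direct symbolic route are fine.

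The ``structural constraint'' bullet, however, is wrong as written. First, the coefficients are swapped. At a flex $\Phi_6=0\neq\Phi_{14}$, so the weight there is exactly $2$ iff $\beta\neq0$. It is $\beta=0$, not $\alpha=0$, that gives $\Omega_3\equiv\alpha\Phi_6^9\Phi_{21}$ with weight $9$ at the flexes, which is excluded since $w_3\le4$ at an ordinary flex (cf.\ the remark in Example~\ref{ex:klein-flex}).

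Second, the other degenerate member is $\alpha=0$, i.e.\ $\Omega_3\equiv\beta\Phi_6^2\Phi_{14}^3\Phi_{21}$ with divisor $2O_{24}+3O_{56}+O_{84}$. This has weight $3$, not ${\ge}21$, at the points of $O_{56}$. Those points are neither flexes nor sextactic, and Theorem~\ref{thm:KE} allows weight $3$ there: a $3$-dectactic point with sequence $0,\ldots,8,12$. The order-$3$ stabiliser does not help either. Its residue count only forces four vanishing orders $\equiv0$ and three in each other class modulo $3$, and $\{0,\ldots,8,12\}$ satisfies this.

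So structure alone yields only $\beta\neq0$. That $\alpha\neq0$, i.e.\ that the residual divisor is not $3O_{56}$, comes solely from the computed ratio $21962:3$. Since you compute that ratio anyway, your final conclusion stands. But the sentence ``Thus $\alpha\beta\neq0$'' is not justified before the computation and should be restricted to $\beta\neq0$.
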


\begin{proof}
(i) $\divisor(\Phi_6|_\Klein)=O_{24}$ is reduced, so every flex has $w_1=1$, i.e.\ is ordinary.

(ii) By Lemma~\ref{lem:invariance}, $\Omega_2|_{\Klein}$ is a multiple of $\Phi_6\Phi_{21}|_{\Klein}$, and it is non-zero; hence $\W_2(\Klein)=O_{24}+O_{84}$. The points of $O_{84}$ are sextactic also by Theorem~\ref{thm:homologies}(a).

(iii) By Lemma~\ref{lem:invariance},
$\Omega_3|_{\Klein}=\Phi_{21}\Phi_6^2\,(\lambda\Phi_6^7+\mu\Phi_{14}^3)|_{\Klein}$, and
$\lambda,\mu$ can be taken in $\QQ$ because $\Omega_3$ and the invariants are defined over $\QQ$. To determine $\lambda:\mu$ we evaluate both sides exactly at the point $P_0=(2:\theta:1)\in\Klein$, where $\theta^3+8\theta+2=0$. Comparing coefficients in
the cubic field $\QQ(\theta)$ gives three linear equations for $(\lambda,\mu)$; they are consistent and yield $\lambda:\mu=21962:3$. Hence $\Omega_3\equiv c'\,\Phi_6^2\Phi_{21}\Psi_{42}$. (The script \sing{klein.sing} performs this evaluation and, independently, verifies the congruence of forms directly from the Hamiltonian formula.)

On $\Klein$, $\Psi_{42}=0$ means $\pi=-21962/3$, which is not one of $\infty,0,1728$.
By Lemma~\ref{lem:quotient}, $\Klein\cap\V(\Psi_{42})$ is a free orbit $O_{168}$ of $168$ distinct points, disjoint from $O_{24}\cup O_{56}\cup O_{84}$. With \ref{K:orbits}
this gives $\divisor(\Omega_3|_\Klein)=2\,O_{24}+O_{84}+O_{168}.$ The points of $O_{168}$ are not flexes and not sextactic by (ii), hence dectactic, and $\Omega_3$ does not vanish on $O_{56}$. The vanishing sequences follow from the weights: on $O_{24}$ Lemma~\ref{lem:flex} with $w_3=2$ forces $m=10$, and $\{0,\ldots,8,10\}$ is the only sequence of weight $1$.
\end{proof}

\begin{example}\label{ex:klein-flex}
The point $P=(0:0:1)\in O_{24}$ gives an independent check that $\mu\neq0$. In the chart $Z=1$ the curve is $x+y^3+x^3y=0$ with tangent line $x=0$. The cubic
$E:\ XZ^2+Y^3=0$, i.e.\ $x+y^3=0$, is smooth at $P$ and parametrised by $y$; substituting
$x=-y^3$ gives
\[ x+y^3+x^3y=(-y^3)+y^3+(-y^3)^3y=-y^{10},\]
so $I_P(\Klein,E)=10$. Thus $m=10$ in Lemma~\ref{lem:flex}, the vanishing sequence is
$\{0,\ldots,7,9,10\}$ and $w_3(P)=2$, in agreement with the factor $\Phi_6^2$. If we had $\mu=0$, the flexes would have weight $9$. Note that no homology of $\PP^2$ preserves $\Klein$ except the $21$ involutions (elements of order $3$, $4$ and $7$ of
$G\subset\SL_3$ have three distinct eigenvalues; \sing{groups.sing} generates $G$ and finds no homologies other than the $21$ involutions), so the weight $2$ at the flexes is not forced by
Theorem~\ref{thm:homologies}.
\end{example}

\begin{remark}\label{rem:naive}
By \eqref{eq:klein-relation}, $\Phi_6^2\Phi_{21}^3\equiv\Phi_{21}\Phi_6^2(\Phi_{14}^3-1728\Phi_6^7)$ is the member $\lambda:\mu=-1728:1$ of the same pencil. It is invariant of degree $75$, and it would give weights $2$ on $O_{24}$ and $3$ on $O_{84}$, again of total
$24\cdot2+84\cdot3=300$, and a parity-compatible vanishing sequence $\{0,\ldots,8,12\}$ at the axis points (Theorem~\ref{thm:KE}). The computation rules it out: the $84$ sextactic points carry exactly the weight forced by Theorem~\ref{thm:homologies}, and the remaining weight sits on the generic orbit $O_{168}$. The script \sing{klein.sing} confirms that $\Omega_3$ is not proportional to $\Phi_6^2\Phi_{21}^3$ modulo $\Phi_4$ and, via the transversality of $\V(\Phi_4,\Psi_{42})$ and its disjointness from $O_{24}\cup O_{56}\cup O_{84}$, that $\V(\Phi_4,\Omega_3)$ has exactly $276$ distinct points.
\end{remark}

\begin{remark}
On $O_{168}$ one has $\Phi_{14}^3/\Phi_6^7=-21962/3$, where $21962=2\cdot79\cdot139$. In the notation $J=\Phi_{14}^3/(1728\,\Phi_6^7)$, for which the
three branch values are $J=\infty,0,1$, the dectactic orbit is the fibre $J=-10981/2592$.
\end{remark}

\begin{proposition}\label{prop:klein4}
Modulo $\Phi_4$ and up to a non-zero scalar,
\[
\Omega_4\equiv\Phi_{21}\bigl(\Phi_{14}^9+188268\,\Phi_{14}^6\Phi_6^7
+2736969693\,\Phi_{14}^3\Phi_6^{14}+1770914087580\,\Phi_6^{21}\bigr).
\]
The cubic $p(s)=s^3+188268s^2+2736969693s+1770914087580$ is irreducible over $\QQ$ and has three distinct real roots $s_1\approx-172457.1$, $s_2\approx-15132.30$,
$s_3\approx-678.596$, none of them equal to $0$, $1728$ or $-21962/3$. Hence
\[ \W_4(\Klein)=O_{84}+\pi^{-1}(s_1)+\pi^{-1}(s_2)+\pi^{-1}(s_3), \]
$588$ distinct points of weight one. In particular neither the flexes nor the dectactic orbit $O_{168}$ are $4$-Weierstrass points: on $\Klein$ we have
$\W_1\subseteq\W_2\subseteq\W_3$, but $\W_1\not\subseteq\W_4$ and $\W_3\not\subseteq\W_4$.
\end{proposition}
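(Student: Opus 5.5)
The plan follows the proof of Theorem~\ref{thm:klein}(iii), now in degree $d_4=147$. By Lemma~\ref{lem:invariance}, $\Omega_4|_{\Klein}$ is $G$-invariant. It is also odd-degree, so it is a linear combination of the four restrictions $\Phi_6^{21-7c}\Phi_{14}^{3c}\Phi_{21}$, $0\le c\le 3$. Hence
\[ \Omega_4\equiv\Phi_{21}\bigl(a_3\Phi_{14}^9+a_2\Phi_{14}^6\Phi_6^7+a_1\Phi_{14}^3\Phi_6^{14}+a_0\Phi_6^{21}\bigr)\pmod{\Phi_4} \]
with $a_i\in\QQ$, because $\Omega_4$ (built from $14$ standard monomials over $\QQ$) and the invariants are rational. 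The factor $\Phi_{21}$ is thus forced, consistent with Theorem~\ref{thm:allq}, which already puts $O_{84}$ into $\W_4$.

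To determine $(a_0:a_1:a_2:a_3)$ I would compute $\Omega_4$ exactly from Proposition~\ref{prop:hamilton} in \sing{klein.sing}. I would then either reduce the congruence modulo a Gröbner basis of $(\Phi_4)$ and compare normal forms, or evaluate both sides at a point such as $P_0=(2:\theta:1)$ in the cubic field $\QQ(\theta)$. The evaluation yields three linear equations; since there are four unknowns up to scale, one or two further points (or a second cubic field) would be needed, together with a check of consistency. The direct normal-form comparison is preferable because it also certifies the congruence. I expect this to be the main obstacle: a $14\times14$ determinant of iterated Hamiltonian derivatives of degree $147$ is heavy, and I would first do it modulo a large prime to fix the ratios and then confirm over $\QQ$. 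Independently, $a_0\ne0$ can be checked by computing the vanishing sequence of $V_4$ at the flex $(0:0:1)$ via \sing{vanseq} and finding it to be $\{0,\ldots,13\}$.

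Once the formula is known, put $s=\Phi_{14}^3/\Phi_6^7=\pi$. On $\Klein\setminus O_{24}$ the bracket equals $\Phi_6^{21}p(s)$. Since $p(0)\ne0$ (the coefficient $a_0\ne0$), the bracket does not vanish on $O_{24}$, and it vanishes exactly on $\pi^{-1}(\{s_1,s_2,s_3\})$. For $p$ I would establish the following facts:
\begin{itemize}
\item irreducibility over $\QQ$, by the rational root test (no integer divisor of the constant term is a root) or by factoring modulo a prime where $p$ stays irreducible;
\item three distinct real roots, from a positive discriminant, with numerical isolation;
\item $p(0),p(1728),p(-21962/3)\ne0$ by direct evaluation.
\end{itemize}

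By Lemma~\ref{lem:quotient} each fibre $\pi^{-1}(s_i)$ is then a free orbit of $168$ distinct points, disjoint from $O_{24},O_{56},O_{84},O_{168}$. Since $p$ has simple roots and $\pi$ is unramified over $s_i$, the bracket vanishes to order one there. Together with \ref{K:orbits} this gives $\W_4(\Klein)=O_{84}+\sum_i\pi^{-1}(s_i)$, of degree $84+3\cdot168=588$ in agreement with Proposition~\ref{prop:degrees}, with all weights one. The non-inclusions follow because $O_{24}$ and $O_{168}$ are missing from the support.
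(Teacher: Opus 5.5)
Your route is essentially the paper's. Lemma~\ref{lem:invariance} reduces $\Omega_4|_{\Klein}$ to the four invariants $\Phi_6^{21-7c}\Phi_{14}^{3c}\Phi_{21}$, the coefficients are fixed by computing the $14\times14$ Hamiltonian determinant and verifying the congruence modulo $\Phi_4$, and the description of $\W_4(\Klein)$ then follows from the properties of $p$ and Lemma~\ref{lem:quotient}. The paper additionally confirms the disjointness and reducedness statements by script, where you derive them from the formula; both are fine.

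There is, however, one wrong step. You claim that $p(0)\neq0$, i.e.\ $a_0\neq0$, implies that the bracket does not vanish on $O_{24}$. You also claim that the vanishing sequence at the flex $(0:0:1)$ certifies $a_0\neq0$. But the flexes form the fibre $\pi=\infty$, not $\pi=0$. On $O_{24}$ one has $\Phi_6=0$, so the bracket reduces to $a_3\Phi_{14}^9$. Since $\Phi_{14}$ does not vanish on $O_{24}$, non-vanishing there is equivalent to $a_3\neq0$, i.e.\ to $p$ being genuinely of degree three; with your normalisation $a_3=1$. If $a_3$ were $0$ while $a_0\neq0$, the bracket would vanish at every flex, so your stated reason does not prove what you need.

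The condition $p(0)\neq0$ controls instead the orbit $O_{56}=\pi^{-1}(0)$. There $\Phi_{14}=0$ and the bracket reduces to $a_0\Phi_6^{21}$. This is exactly what is needed to keep $O_{56}$ out of $\operatorname{supp}\W_4(\Klein)$. Likewise, your independent check at $(0:0:1)$, which finds the vanishing sequence $\{0,\dots,13\}$, proves $a_3\neq0$, not $a_0\neq0$.

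Both coefficients are non-zero in the verified formula, so the conclusion survives once you swap the two justifications. You should also state explicitly that $p(0)\neq0$ is what excludes $O_{56}$.
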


\begin{proof}
By Lemma~\ref{lem:invariance}, $\Omega_4|_\Klein$ is a combination of the four invariants $\Phi_6^{21-7c}\Phi_{14}^{3c}\Phi_{21}$; \sing{klein.sing} computes $\Omega_4$ ($14\times14$ Hamiltonian determinant, under a minute) and verifies the displayed congruence. The roots of $p$ are the values of $\pi$ on the residual divisor, and Lemma~\ref{lem:quotient} applies. The script also checks directly that the residual divisor consists of $504$ distinct points disjoint from $O_{84}$, that neither $O_{24}$ nor $O_{168}$ meets $\V(\Omega_4)$, that $p$ is irreducible over $\QQ$ and non-zero at $0$, $1728$, $-21962/3$, and that $p$ changes sign on three disjoint intervals around $s_1,s_2,s_3$.
\end{proof}

\section{The Picard quartic revisited}\label{sec:picard}

Kamel and Elshareef computed the $3$-Weierstrass divisor of the Picard quartic
\[ \Pic:\ F=Y^3Z-X^4-Z^4=0\qquad(\text{affinely } y^3=x^4+1) \]
with their affine technique \cite[Section~4.2]{KamelElshareef}. Here we redo the computation with $\Omega_n$ and show that every flex and every sextactic point in $\W_3(\Pic)$ is explained by Theorem~\ref{thm:homologies}. Let $\omega=e^{2\pi i/3}$ and consider:
\begin{itemize}[leftmargin=2em]
\item $\sigma:(X:Y:Z)\mapsto(X:\omega Y:Z)$, a homology of order $3$ with axis $Y=0$ and centre $P_\infty=(0:1:0)\in\Pic$;
\item $\rho:(X:Y:Z)\mapsto(iX:Y:Z)$, a homology of order $4$ with axis $X=0$ and centre $(1:0:0)$; $\rho^2$ is an involution with the same axis;
\item $\tau_{k,\pm}:(X:Y:Z)\mapsto(\pm\sqrt3\,X:\ Y+2\omega^kZ:\ \omega^{-k}Y-Z)$, $k=0,1,2$. One checks $F\circ\tau_{k,\pm}=9F$ and $\tau_{k,\pm}^2=3\cdot\mathrm{id}$, so these are six involutions; their axes are the lines $Y=\omega^k(1\pm\sqrt3)Z$;
\item the conjugates $\tau_{k,+}\sigma\tau_{k,+}^{-1}$, homologies of order $3$ with centres $(0:\omega^k:1)$ and axes $Y=-2\omega^kZ$.
\end{itemize}
The elements $\sigma$, $\rho$, $\tau_{0,+}$ generate a group of order $48$ (the full automorphism group of $\Pic$, cf.\ \cite{KuribayashiKomiya,Dolgachev}), and this group contains exactly seven involutions, four cyclic groups of homologies of order three and one of order four (\sing{groups.sing}; the properties of $\sigma$, $\rho$, $\tau_{k,\pm}$ and of the conjugates are checked in \sing{picard.sing}). So $\Pic$ has four homologies of order three (up to inverses),
with centres the four points of $\Pic\cap\V(X)$ and axes the four lines $Y(Y^3+8Z^3)=0$, and seven involutions, with axes $X=0$ and the six lines $A_6=0$, where
\[ A_6=Y^6-20Y^3Z^3-8Z^6=\prod_{k=0}^{2}\bigl(Y-\omega^k(1+\sqrt3)Z\bigr)\bigl(Y-\omega^k(1-\sqrt3)Z\bigr). \]

\begin{theorem}\label{thm:picard}
Modulo $F$ and up to non-zero scalars,
\[\Omega_1\equiv X^2\,Y(Y^3+8Z^3),\qquad
  \Omega_2\equiv X^5\,Y(Y^3+8Z^3)\,A_6\,B_{12},\qquad
\Omega_3\equiv X^5\,Y^4(Y^3+8Z^3)^4\,A_6\,\Theta_{48},
\]
where $B_{12}=7Y^{12}+8360Y^9Z^3-23232Y^6Z^6+28160Y^3Z^9-8192Z^{12}$ and $\Theta_{48}=\theta(Y^3,Z^3)$ with
\begin{align*}
\theta(a,b)={}&a^{16}+73568a^{15}b+459631488a^{14}b^2-23704507136a^{13}b^3+43651171023104a^{12}b^4\\
&-525897401088000a^{11}b^5+2927751974158336a^{10}b^6-9709444779212800a^9b^7\\
&+21463663457599488a^8b^8-33764831584583680a^7b^9+39204280720162816a^6b^{10}\\
&-33925035064295424a^5b^{11}+21508446746574848a^4b^{12}-9592689196531712a^3b^{13}\\
&+2832204514197504a^2b^{14}-505775348776960ab^{15}+43980465111040b^{16},
\end{align*}
irreducible over $\QQ$. The $3$-Weierstrass divisor is
\[
\begin{array}{llcccl}
\toprule
\textup{cut out by} & \textup{type} & \# & \textup{sequence of }V_3 & w_3 & \textup{reason}\\
\midrule
X & \textup{hyperflexes} & 4 & 0,\ldots,6,8,9,12 & 5 & \textup{centres, order }3\\
Y(Y^3+8Z^3) & \textup{ordinary flexes} & 16 & 0,\ldots,7,9,12 & 4 & \textup{axes, order }3\\
A_6 & \textup{sextactic} & 24 & 0,\ldots,8,10 & 1 & \textup{axes, order }2\\
B_{12} & \textup{sextactic} & 48 & 0,\ldots,9 & 0 & \textup{---}\\
\Theta_{48} & \textup{dectactic} & 192 & 0,\ldots,8,10 & 1 & \textup{---}\\
\bottomrule
\end{array}
\]
with $4\cdot5+16\cdot4+24+192=300$ and $236$ distinct $3$-Weierstrass points, in accordance with \cite{KamelElshareef}. Each of the $16$ ordinary flexes $P$ admits a smooth cubic $E_P$ with $E_P\cdot\Pic=12P$; for $P=(\zeta:0:1)$, $\zeta^4=-1$,
\[  F=Z\cdot\bigl(Y^3-4\zeta X^3+6\zeta^2X^2Z-4\zeta^3XZ^2-2Z^3\bigr)-(X-\zeta Z)^4 .\]
\end{theorem}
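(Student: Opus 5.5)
We follow the pattern of the Fermat and Komiya--Kuribayashi sections: first show that the symmetric part of each $\W_n(\Pic)$ is forced by homologies and Lemma~\ref{lem:hyperflex}, then identify the residual factor by computation, then use degrees to pin down the weights.

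\textbf{Forced part.} The four points of $\Pic\cap\V(X)$ are the points $(0:\omega^k:1)$ and $P_\infty$. They are the centres of the four order-three homologies, so they are hyperflexes by Theorem~\ref{thm:homologies}(b). The same follows from Theorem~\ref{thm:homologies}(c) applied to $\rho$, whose axis is $X=0$. Since $\V(X)$ meets $\Pic$ transversally in these four points, $\divisor(X)=H$ with $\deg H=4$.

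The four axes $Y(Y^3+8Z^3)=0$ of the order-three homologies each meet $\Pic$ in four points. By Theorem~\ref{thm:homologies}(b) these $16$ points are ordinary flexes with $w_3=4$; call their divisor $L$. On $Y=0$ one gets $X^4+Z^4=0$, and the other lines follow by applying $\tau_{k,+}$. We must check that these $16$ points are distinct and disjoint from $H$. Then $\divisor(X^2Y(Y^3+8Z^3))=2H+L$ has degree $24$ and is at most $\W_1$, so equality holds. This gives $\Omega_1$, and it shows there are no further flexes.

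The six lines $A_6=0$ are axes of the involutions $\tau_{k,\pm}$. Each meets $\Pic$ transversally in four points, which are not flexes, since the flexes are already $H\cup L$. By Theorem~\ref{thm:homologies}(a), the resulting $24$ points, with divisor $S$, are sextactic with $w_3\in\{1,3\}$.

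\textbf{Shape of $\Omega_2,\Omega_3$.} By Lemmas~\ref{lem:hyperflex} and~\ref{lem:flex}, $w_2=5$ on $H$ and $w_2=1$ on $L$. Hence $\W_2\ge 5H+L+S$, which has degree $20+16+24=60$. The residual divisor has degree $48$ and lies in $|\OO(12)|$, so it is cut out by a form $B_{12}$.

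For order three, $w_3=5$ on $H$ and $w_3=4$ on $L$, so $\W_3\ge 5H+4L+S$, of degree $20+64+24=108$. The residual divisor has degree $192$ and lies in $|\OO(48)|$; call its form $\Theta_{48}$.

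To identify these forms, note that $\sigma$ and $\rho$ act with $F\circ g=F$. Corollary~\ref{cor:covariance} then makes $\Omega_n$ semi-invariant up to $\det(g)$-characters. After dividing out the known factors, the residual forms are invariant under $Y\mapsto\omega Y$ and under $X\mapsto iX$ up to sign. Reducing modulo $F$ to eliminate $X^4$, they can be written as polynomials in $Y^3,Z^3$ alone (possibly times powers of $X$, excluded by degree count). This is why $B_{12}$ and $\Theta_{48}=\theta(Y^3,Z^3)$ have the stated shape.

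The coefficients themselves are obtained from the Hamiltonian formula, Proposition~\ref{prop:hamilton}, by dividing $\Omega_n$ modulo $F$ by the known factor, as in \sing{picard.sing}. We then cross-check against the affine result of \cite{KamelElshareef} quoted in Remark~\ref{rem:KEcomparison}.

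\textbf{Weights and distinctness.} Irreducibility and square-freeness of $\theta$ and of $b(a,c)$, where $B_{12}=b(Y^3,Z^3)$, are checked over $\QQ$. We also check that their roots $a/b$ avoid the values of $Y^3/Z^3$ on $H$, $L$ and $S$. These values are $0$, $-8$ and $\omega^{3k}(1\pm\sqrt3)^3$ (namely $10\pm6\sqrt3$), together with $\infty$.

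Each root $u=Y^3/Z^3$ then gives $12$ distinct points of $\Pic$: three choices of $Y$, and four of $X$ from $X^4=Y^3Z-Z^4\neq0$. These points are transversal intersections. Hence $\Theta_{48}$ cuts out $192$ distinct points $D$, and $B_{12}$ cuts out $48$ points $S''$, each of $2$-weight one. The points of $S''$ are sextactic with $w_3=0$, because $\Omega_3$ has no $B_{12}$ factor; the two residual polynomials share no root.

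Weight one on $D$ now follows from the degree count. The points of $D$ are not flexes or sextactic, so they are dectactic with sequence $\{0,\ldots,8,10\}$. On $S$ the total $300$ forces $w_3=1$ rather than $3$, which gives $\{0,\ldots,8,10\}$ by Theorem~\ref{thm:homologies}(a).

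The explicit cubic $E_P$ comes from the formula in Theorem~\ref{thm:homologies}(b) for $\sigma$, where $u=Y$, $F_1=Z$, $F_4=-X^4-Z^4$ and $t=X-\zeta Z$. One computes $\lambda=F_4(1,0)/t(1,0)^4=-1$ and $B=(-t^4-F_4)/Z$, and expands. Smoothness of $E_P$ is a direct check.

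The main obstacle is the exact computation and factorisation of the $45\times$-weighted $10\times10$ determinant modulo $F$, and certifying that $\theta$ is irreducible and avoids the special values. I would do this by exact Gröbner reduction in \texttt{SINGULAR}, falling back to modular evaluation plus the lifting argument of Remark~\ref{rem:general}.
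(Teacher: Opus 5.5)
Your proposal is correct and follows essentially the paper's proof. Both arguments run the same way: the homology-forced lower bound $5H+4L+S$ of weight $108$; identification of the residual forms by the Hamiltonian determinant, as in \sing{picard.sing}; reducedness and pairwise disjointness of the factors; and the displayed identity as Theorem~\ref{thm:homologies}(b) with $\lambda=-1$. Your degree argument for $\Omega_1$ and your symmetry explanation of the shape, which the paper gives in the subsequent remark, are harmless additions.

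There are three minor slips:
\begin{enumerate}
\item The values of $Y^3/Z^3$ to be avoided must also include $1$, coming from the points $(0:\omega^k:1)$ of $H$. Your condition $X^4=Y^3Z-Z^4\neq0$ already requires this.
\item Only square-freeness of $b$ is needed, not irreducibility.
\item A factor $X^a$ in the residual is excluded by its $\rho$-eigenvalue being exactly $1$, not by a degree count.
\end{enumerate}
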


\begin{proof}
The lower bounds are Theorem~\ref{thm:homologies}: the points of $\V(X)$ are hyperflexes (axis points of $\rho$, and centres of the order-three homologies), so $w_3=5$; the points on the four axes $Y(Y^3+8Z^3)=0$ are ordinary flexes with $w_3=4$;
the $24$ points on the six axes $A_6=0$ are non-flex points with $w_3\ge1$. These account for $20+64+24=108$ of the weight. The identities for $\Omega_1$, $\Omega_2$, $\Omega_3$, the transversality of all factors, their pairwise disjointness and the irreducibility of $\theta$ are verified in \sing{picard.sing}; the residual divisor of degree $192$ is reduced, so it consists of dectactic points of weight one, and the $48$ points of $\V(B_{12})$, which are $1$-sextactic by the formula for $\Omega_2$, are not $3$-Weierstrass points. The displayed identity is Theorem~\ref{thm:homologies}(b) for $u=Y$, $F_1=Z$, $F_4=-X^4-Z^4$, $t=X-\zeta Z$,
$\lambda=-1$.
\end{proof}

\begin{remark}[Relation to the affine formulas of \cite{KamelElshareef}]
All factors of $\Omega_2$ and $\Omega_3$ except the power of $X$ are binary forms in $Y,Z$. This is forced by $\rho$: using $X^4\equiv Y^3Z-Z^4$, every form that is an eigenvector of $\rho$ (as $\Omega_n$ is, by Corollary~\ref{cor:covariance}) is congruent modulo $F$ to $X^a$ times a binary form in $Y,Z$ for some $0\le a\le3$, and a binary form cuts out a union of fibres of the projection from the centre $(1:0:0)$ of $\rho$, i.e.\ of $\rho$-orbits. Since $\Omega_n$ is also an eigenvector of $\sigma$,
these binary forms are, up to a power of $Y$, polynomials in $Y^3$ and $Z^3$. On the affine curve $y^3=x^4+1$ (with $Z=1$) this explains why the polynomials of
\cite{KamelElshareef} are polynomials in $x^4$: $Y^3+8Z^3\mapsto x^4+9$, $A_6\mapsto Q_8=x^8-18x^4-27$, $B_{12}\mapsto Q_{16}$ with $Q_{16}=7x^{16}+8388x^{12}+1890x^8+6804x^4+5103$, and $\Theta_{48}\mapsto Q_{64}$, the polynomial of degree $64$ in \cite[Sec.~4.2]{KamelElshareef} (all checked in \sing{picard.sing}). The phenomenon noted in \cite[Remark~2]{KamelElshareef}, that sextactic points of the same type can have different $3$-weights, is explained by Theorem~\ref{thm:homologies}: the sextactic points in $\W_3(\Pic)$ are precisely those lying on axes of involutions, exactly as for the Komiya--Kuribayashi quartic.
\end{remark}

\begin{remark}
In order four, \sing{picard.sing} finds $4$-weight $5$ at the hyperflexes, $4$ at the ordinary flexes and $1$ at the $24$ involution axis points, in accordance with Theorem~\ref{thm:allq} (for an axis point of a homology of order three the residue
count gives $w_4\ge4$), while none of the $192$ dectactic points is a $4$-Weierstrass point.
\end{remark}

\section{Concluding remarks}\label{sec:comparison}

\begin{table}[ht]
\centering
\small
\renewcommand{\arraystretch}{1.2}
\resizebox{\textwidth}{!}{%
\begin{tabular}{@{}lcccc@{}}
\toprule
 & Fermat $\Fermat$ & Komiya--Kuribayashi $\KK$ & Klein $\Klein$ & Picard $\Pic$\\
\midrule
involutions / order-$3$ / order-$4$ homologies & $15/0/3$ & $9/0/0$ & $21/0/0$ & $7/4/1$\\
flexes & $12$ hyperflexes & $12$ hyperflexes & $24$ ordinary & $4$ hyper, $16$ ordinary\\
sextactic points & $48$ & $48$ & $84$ & $72$\\
$\Omega_2$ mod $F$ & $X^5Y^5Z^5\Delta_{12}$ & $L_+^2L_-XYZ\,S_6$ & $\Phi_6\Phi_{21}$ & $X^5Y(Y^3+8Z^3)A_6B_{12}$\\
$\Omega_3$ mod $F$ & $X^5Y^5Z^5\Delta_{12}\Pi_{48}$ & $L_+^2L_-XYZ\,\Psi_{54}$ & $\Phi_6^2\Phi_{21}\Psi_{42}$ & $X^5Y^4(Y^3+8Z^3)^4A_6\Pi_{48}$\\
flex contribution to $\W_3$ & $12\cdot 5$ & $12\cdot 5$ & $24\cdot 2$ & $4\cdot5+16\cdot4$\\
sextactic contribution & $48\cdot 1$ & $24\cdot 1$ & $84\cdot 1$ & $24\cdot1$\\
dectactic contribution & $192\cdot 1$ & $216\cdot 1$ & $168\cdot 1$ & $192\cdot1$\\
distinct $3$-Weierstrass points & $252$ & $252$ & $276$ & $236$\\
$\W_3\subseteq\W_4$? & no & no & no & no\\
\bottomrule
\end{tabular}}
\caption{Second- and third-order Weierstrass ivisors of the four quartics. The homologies are counted in $\Aut(\Fermat)$, $G_{24}\subseteq\Aut(\KK)$, $\Aut(\Klein)$
and $\Aut(\Pic)$, cyclic groups of homologies counted once.}
\label{tab:comparison}
\end{table}

Table~\ref{tab:comparison} summarises the examples. The local analysis of Section~\ref{sec:local} explains most of their structure. Hyperflexes always
contribute $5$ to the total weight $300$. An ordinary flex contributes $0$ (generically), $2$ (e.g.\ on the Klein quartic) or $4$ (exactly when $C$ lies in a pencil $\langle T_P^4,\ell\cdot E\rangle$ as in Proposition~\ref{prop:totalcontact};
e.g.\ on the axes of a homology of order three, as on the Picard quartic). Points on the axes of involutions always contribute, which accounts for all sextactic points of $\Fermat$ and $\Klein$ but only for some of those of $\KK$ and $\Pic$. The rest of the weight sits on dectactic points, which for these curves are simple and, in contrast to the symmetric points, do not persist in order four.

For order two, a single classical covariant, Cayley's $2$-Hessian in the form corrected by Maugesten and Moe \cite{MaugestenMoe}, detects sextactic points together with flex contributions. For order three the natural object is the Wronskian of the third Veronese system. On a smooth quartic it is a section of $\OO_C(75)$ with divisor of degree $300$, conceptually simple but computationally large. We conclude with the following problem.
\begin{problem}Is there an explicit covariant of plane quartics, a ``third Hessian'', whose restriction to $C$ is $\Omega_3$, analogous to Cayley's $2$-Hessian?
Proposition~\ref{prop:projective} gives the covariant $\det(J_L^iM_j)\equiv-L^{45}\Omega_3$ of the pair $(F,L)$; the problem is to remove the auxiliary line $L$ by a closed formula.
\end{problem}

\appendix
\section{The Singular scripts}\label{app:comp}

The ancillary files are seven \texttt{SINGULAR} scripts and the file \sing{omega.sing} of shared procedures, tested with \texttt{SINGULAR}~4.3.2 \cite{Singular}. Each script runs on its own, e.g.\ \sing{Singular -q fermat.sing}, and prints every check as a statement followed by \sing{1} (true) or \sing{0} (false); computed vanishing sequences are printed next to the expected ones. The file \sing{output.txt} contains the complete output; all scripts together take about two minutes, half of it for $\Omega_4$ of $\Klein$. All computational scripts and source code supporting the calculations and findings of this study are openly available on GitHub at \cite{github}.

\paragraph{Shared procedures.} \sing{Omega(F,n)} computes $\Omega_n$ by Proposition~\ref{prop:hamilton}: it dehomogenises the standard monomials of degree $n$ modulo $F$, applies $\delta$ repeatedly while reducing modulo $f$, takes the determinant, reduces it with respect to a degree ordering and homogenises to degree $d_n$. \sing{prop(A,B,F)} tests $A\equiv cB\pmod F$ with $c\neq0$; \sing{transversal(F,G)} tests that $\V(F,G)$ consists of $\deg F\cdot\deg G$ distinct points (no common zero of $F$, $G$ and the $2\times2$ minors of their Jacobian matrix); \sing{empty(I)} tests $\V(I)=\emptyset$; \sing{vanseq(F,n,x0,y0)} returns the vanishing sequence of $V_n$ at $(x_0:y_0:1)$ as in Remark~\ref{rem:recipe}, and \sing{wsum} its weight. Irrational points are handled over a number field, given by a ring parameter with a minimal polynomial; quotients such as $\Psi_{54}$ are obtained with \sing{lift}.

\paragraph{The scripts.}
\begin{center}\small
\begin{tabular}{@{}l l p{0.55\textwidth}@{}}
\toprule
script & for & checks\\
\midrule
\sing{general.sing} & Sections~\ref{sec:prelim}--\ref{sec:local} & degrees $d_n$; Proposition~\ref{prop:projective}, Corollary~\ref{cor:covariance}; vanishing sequences in the examples of \cite[Sec.~3]{KamelElshareef} and for homologies of orders $2,3,4$; residue counts of Theorem~\ref{thm:allq}\\
\sing{fermat.sing} & Section~\ref{sec:fermat} & Theorems~\ref{thm:fermat} and~\ref{thm:fermat4}, $P_{12}$, $P_{48}$, the check modulo $33037$\\
\sing{kk.sing} & Section~\ref{sec:kk} & Proposition~\ref{prop:kk-flexes}, Theorem~\ref{thm:kk}, order four\\
\sing{klein.sing} & Section~\ref{sec:klein} & invariants, \eqref{eq:klein-relation}, \ref{K:orbits}, Theorem~\ref{thm:klein}, Example~\ref{ex:klein-flex}, Remark~\ref{rem:naive}, Proposition~\ref{prop:klein4}\\
\sing{picard.sing} & Section~\ref{sec:picard} & homologies, Theorem~\ref{thm:picard} ($\Theta_{48}$ is \sing{Pi48}), $Q_8$, $Q_{16}$, $Q_{64}$, order four\\
\sing{generic.sing} & Remark~\ref{rem:general} & $C_0$ over $\QQ$ and modulo $32003$ and $1000003$\\
\sing{groups.sing} & Sections~\ref{sec:fermat}--\ref{sec:picard} & groups of orders $96$, $24$, $168$, $48$ and their homologies (non-scalar elements with minimal polynomial of degree two)\\
\bottomrule
\end{tabular}
\end{center}

\paragraph{Reduction modulo \texorpdfstring{$p$}{p}.}
Only Remark~\ref{rem:general}, the case $n=3$ in \sing{general.sing} and the independent check after Theorem~\ref{thm:fermat4} use reduction modulo a prime; everything else is exact over $\QQ$ or a number field. If $Z\subset\PP^2_{\ZZ_{(p)}}$ is the complete intersection $\V(F,\Omega)$ and both fibres are finite of the same degree, then $Z$ is finite and flat over $\ZZ_{(p)}$, so an étale special fibre forces an étale generic fibre, and a closed subscheme with empty special fibre has empty generic fibre. For a point $\tilde P$ of $C$ over a $p$-adic field with good reduction $P$, the dimension of the space of sections of $V_n$ vanishing to order $\ge k$ can only grow under reduction, so the vanishing sequence at $\tilde P$ is termwise bounded by that at $P$. Since \sing{vanseq} computes ranks of restriction maps and never divides by factorials, it is valid in every characteristic.

\paragraph{AI Disclaimer.} During the preparation of this manuscript, the author(s) used AI-assisted technologies for the purposes of improving the English phrasing and readability of the text, as well as assisting with the implementation and debugging of \texttt{SINGULAR} routines.

\bigskip

\noindent
Ewelina Nawara\\
Department of Mathematics,\\
University of the National Education Commission Krakow\\
Podchor\c a\.zych 2, PL-30-084 Krak\'ow, Poland.\\
\textit{E-mail address:} \texttt{ewelina.nawara@uken.krakow.pl}
\end{document}